\newcommand{\ccc}{{\mathbf C}}
\newcommand{\nnn}{{\mathbf N}}
\newcommand{\rrr}{{\mathbf R}}
\newcommand{\zzz}{{\mathbf Z}}
\newtheorem{thm}{Theorem}[section]
\newtheorem{prop}{Proposition}[section]
\newtheorem{lemma}{Lemma}[section]
\newtheorem{cor}{Corollary}[section]
\newtheorem{note}{Note}[section]
\numberwithin{equation}{section}
\begin{document}

\title{
Mock theta functions and characters of \\ N=3 superconformal modules IV}

\author{\footnote{12-4 Karato-Rokkoudai, Kita-ku, Kobe 651-1334, Japan, 
\hspace{5mm}
wakimoto.minoru.314@m.kyushu-u.ac.jp, \quad wakimoto@r6.dion.ne.jp
}{\, Minoru Wakimoto}}

\date{\empty}

\maketitle

\begin{center}
Abstract
\end{center}

In this paper we obtain explicit formulas for mock theta functions 
$\Phi^{[m,s]}(\tau, z_1, z_2,t)$  $(m \in \frac12 \nnn, s \in \frac12 \zzz)$
by using the coroot lattice of the Lie superalgebra $D(2,1,a)$ 
and the Kac-Peterson's identity.
As its application, we study the branching functions of tensor products 
of N=3 modules and prove the formula conjectured in \cite{W2022d}.

\tableofcontents

\section{Introduction}

In our previous paper \cite{W2022b}, we studied mock theta functions 
$\Phi^{[m,s]}$ by using the coroot lattice of $D(2,1;a)$ 
and the denominator identity of $\widehat{sl}(2|1)$
and deduced explicit formulas for $\Phi^{[m,\frac12]}$.
In the current paper, we study $\Phi^{[m,s]}$ by using the 
Kac-Peterson's identity and obtain explicit formulas 
for $\Phi^{[m,0]}$ and $\Phi^{[m,\frac12]}$ in section \ref{sec:explicit:Phi}.
Since all $\Phi^{[m,s]}$ $(m \in \frac12\nnn, \, s \in \frac12 \zzz)$ 
are obtained from $\Phi^{[m,0]}$ and $\Phi^{[m,\frac12]}$ by 
Lemma 2.8 in \cite{W2022a}, 
Propositions \ref{n3:prop:2022-823a} and \ref{n3:prop:2022-824a}
give explicit formulas for all $\Phi^{[m,s]}$ 
$(m \in \frac12\nnn, \, s \in \frac12 \zzz)$.

\medskip

This paper is organized as follows.

In section \ref{sec:preliminaries}, we deduce the 
Kac-Peterson's identity (Lemma \ref{n3:lemma:2022-823a}) from 
the denominator identity of the affine Lie superalgebra 
$\widehat{osp}(3|2)$.
In section \ref{sec:formula:Phi} we deduce explicit formulas for 
$\Phi^{[\frac{m}{2},s]}(2\tau, 
z+\frac{\tau}{2}-\frac12, \, 
z-\frac{\tau}{2}+\frac12, \, \frac{\tau}{8})$ which are the numerator 
of the N=3 modules by Proposition 4.1 in \cite{W2022a}.
In section \ref{sec:N=3:numerator}, we study the properties of the 
space of N=3 numerators by using theta functions.
In section \ref{sec:N=3:character}, using the results in section \ref{sec:N=3:numerator}, 
we obtain Theorem \ref{n3:thm:2022-827a} which proves Conjecture 5.1 in 
\cite{W2022d}, and show that the character of the N=3 module 
$H(\Lambda^{[K(m), m_2]})$ is a 
$\ccc((q^{\frac12}))$-linear combination of 
$(\theta_{1,1})^j(\theta_{0,1})^{m-j}$ \,\ $(0 \leq j \leq m)$.

\medskip

We remark here that the functions $F^{[m,s]}_i$ $(m \in \frac12 \nnn, 
s \in \{0, \frac12\}, i \in \{1,2\})$ introduced in the proof of Lemmas 
\ref{n3:lemma:2022-823b} and \ref{n3:lemma:2022-824a}
have the Lie theoretic background. In our previous paper \cite{W2022b}, 
we used the coroot lattice 
of $D(2,1,a)$ with $a=\frac{-m}{m+1}$. In the current paper we consider 
the coroot lattice of $D(2,1,a)$ with $a=\frac{-2m}{2m+1}$ and
\begin{equation}\left\{
\begin{array}{lcl}
F_{\frac{a}{2}\Lambda_0+\frac14 \alpha_2} &:=&
\sum\limits_{j, \, k \, \in \zzz}(-1)^j \, 
t_{j\alpha_2^{\vee}+k\alpha_3^{\vee}}\bigg(
\dfrac{e^{\frac{a}{2}\Lambda_0+\frac14 \alpha_2}}{1-e^{-\alpha_1}}\bigg)
\\[6mm]
F_{\frac{a}{2}\Lambda_0+\frac14 (\alpha_2+\alpha_3)} &:=&
\sum\limits_{j, \, k \, \in \zzz}(-1)^j \, 
t_{j\alpha_2^{\vee}+k\alpha_3^{\vee}}\bigg(
\dfrac{e^{\frac{a}{2}\Lambda_0+\frac14 (\alpha_2+\alpha_3)}
}{1-e^{-\alpha_1}}\bigg)
\end{array}\right.
\label{n3:eqn:2022-826a1}
\end{equation}
in the terminology in \S 3 of \cite{W2022b}.
Then the functions $F^{[m,s]}_i$ defined by 
\eqref{n3:eqn:2022-823e1}, \eqref{n3:eqn:2022-823e2}, 
\eqref{n3:eqn:2022-824a1} and \eqref{n3:eqn:2022-824a2} are 
\begin{equation}
\left\{
\begin{array}{lcl}
F^{[m,0]}_1 &=& F_{\frac{a}{2}\Lambda_0+\frac14 \alpha_2} 
\\[2mm]
F^{[m,0]}_2&=& r_3(F_{\frac{a}{2}\Lambda_0+\frac14 \alpha_2}) 
\end{array}\right.  \quad \text{and} \quad 
\left\{
\begin{array}{lcl}
F^{[m,\frac12]}_1 &=& F_{\frac{a}{2}\Lambda_0+\frac14 (\alpha_2+\alpha_3)} 
\\[2mm]
F^{[m,\frac12]}_2&=& r_3(F_{\frac{a}{2}\Lambda_0+\frac14 (\alpha_2+\alpha_3)}) 
\end{array}\right.
\label{n3:eqn:2022-826a2}
\end{equation}
Thus the formulas for $\Phi^{[m,0]}$ and $\Phi^{[m,\frac12]}$ in 
Propositions \ref{n3:prop:2022-823a} and \ref{n3:prop:2022-824a} are 
deduced by calculation on the coroot lattice of $D(2,1; \frac{-2m}{2m+1})$ 
and the Kac-Peterson's identity.

\medskip

For notations and definitions in this paper we follow from \cite{W2022a}
\cite{W2022b}, \cite{W2022c} and \cite{W2022d}.

\section{Preliminaries}
\label{sec:preliminaries}

\begin{lemma} 
\label{n3:lemma:2022-823a}
{\rm (Kac-Peterson's identity: formula (5.26) in \cite{KP}) }  

For $a \, \in \, \zzz$ and $j \in \{1,2\}$, the following formula holds:
\begin{equation}
\Phi^{(-)[\frac12, \frac12]}_j(\tau, \, z, \, -z+2a\tau, \, 0)
\,\ = \,\ 
- \, i \, e^{-2\pi iaz} \, \dfrac{\eta(\tau)^3}{\vartheta_{11}(\tau,z)}
\label{n3:eqn:2022-823a}
\end{equation}
\end{lemma}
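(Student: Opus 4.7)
The plan is to derive \eqref{n3:eqn:2022-823a} by specializing the Weyl--Kac denominator identity of the affine Lie superalgebra $\widehat{osp}(3|2)$, as announced in the introduction. The $osp(3|2)$ root system has rank two and admits a choice of positive simple roots with exactly one odd simple root; after affinization, the denominator identity takes the schematic form ``(lattice sum over coroots of the even part) divided by the odd-root factor $1-e^{-\alpha}$ equals a product of classical theta functions''. The three subtasks are then: match the lattice sum on the left with $\Phi^{(-)[\frac12,\frac12]}_j$ evaluated at the desired arguments, identify the product side with $-i\,e^{-2\pi i a z}\,\eta(\tau)^3/\vartheta_{11}(\tau,z)$, and reconcile the sign and phase factors.

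First I would fix a positive system for $osp(3|2)$ containing a single odd simple root and write the affine denominator identity explicitly, expanding the odd-root factor as a geometric series so that the left-hand side becomes a double lattice sum weighted by fermionic signs. Next I would parametrize the two-dimensional Cartan variable as $(z,\,-z+2a\tau)$: the integer $a$ is realized as an affine Weyl translation by $a$ copies of a coroot of the embedded $sl(2)$, producing the exponential prefactor $e^{-2\pi i a z}$ by the standard translation law. With $m=s=\tfrac12$ and $t=0$ corresponding to the vacuum level and the trivial central-charge shift, this specialization reproduces exactly the defining series of $\Phi^{(-)[\frac12,\frac12]}_j$ from \cite{W2022a}.

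The product side then collapses in the usual way: the even roots of $osp(3|2)$ yield $\eta(\tau)^3$ via the Macdonald identity attached to the even part, while the surviving odd root contributes $\vartheta_{11}(\tau,z)$ in the denominator, and the factor $-i$ is produced by the standard normalization of the Jacobi theta function $\vartheta_{11}$. The two sub-cases $j=1,2$ correspond to the two natural ways of organizing the fermionic geometric series (equivalently, to the two choices of lift of the reflection in the odd simple root), and both specialize to the same right-hand side, consistent with the fact that the stated formula is independent of $j$.

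I expect the principal obstacle to be bookkeeping of conventions rather than any deep analytic step: one must align the normalization of $\Phi^{(-)[\frac12,\frac12]}_j$ used in \cite{W2022a} with the natural specialization of the $\widehat{osp}(3|2)$ character, and carefully track the fermionic sign $(-1)^j$, the factor of $i$ arising from $\vartheta_{11}$, and the affine-Weyl translation responsible for the $e^{-2\pi iaz}$ prefactor. No further analytic input is required beyond the $\widehat{osp}(3|2)$ denominator identity itself, which is an instance of the Kac--Wakimoto denominator identity for a basic classical Lie superalgebra of defect one.
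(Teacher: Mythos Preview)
Your high-level strategy matches the paper's: both start from the $\widehat{osp}(3|2)$ denominator identity and specialize to $(z_1,z_2)=(z,-z+2a\tau)$. However, there is a genuine gap. The denominator identity, as stated in \eqref{n3:eqn:2022-823b}, produces the \emph{sum} $\Phi_1^{(-)[\frac12,\frac12]}+\Phi_2^{(-)[\frac12,\frac12]}$, not each $\Phi_j$ separately. Your remark that the cases $j=1,2$ ``correspond to the two natural ways of organizing the fermionic geometric series \dots\ and both specialize to the same right-hand side'' is not an argument that $\Phi_1=\Phi_2$ at these particular arguments; it simply asserts consistency with the target formula. The paper closes this gap by invoking an independent symmetry (Lemma~2.5 in \cite{W2022c}) which gives $\Phi_1^{(-)[\frac12,\frac12]}(\tau,z,-z+2a\tau,0)=\Phi_2^{(-)[\frac12,\frac12]}(\tau,z,-z+2a\tau,0)$ directly, and only then halves the sum. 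Without that input (or an equivalent), your plan proves the lemma only for $\tfrac12(\Phi_1+\Phi_2)$.

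A secondary point: your description of the product side is too schematic. The right-hand side of \eqref{n3:eqn:2022-823b} is a ratio involving five $\vartheta_{11}$-factors and $\eta(\tau)^3$, and at $z_1+z_2=2a\tau$ the quotient $\vartheta_{11}(\tau,z_1+z_2)/\vartheta_{11}(\tau,\tfrac{z_1+z_2}{2})$ is of indeterminate form $0/0$. The paper resolves this via a doubling identity and the explicit evaluations $\vartheta_{10}(\tau,a\tau)=q^{-a^2/2}\eta(2\tau)^2/\eta(\tau)$, $\vartheta_{11}(\tau,z-a\tau)=(-1)^a q^{-a^2/2}e^{2\pi iaz}\vartheta_{11}(\tau,z)$, etc.; this computation is where the prefactor $e^{-2\pi iaz}$ actually appears, rather than from an abstract affine Weyl translation as you describe.
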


\begin{proof} Recall the denominator identity of the affine Lie superalgebra 
$\widehat{osp}(3|2)$ in \cite{KW1994} and \cite{KW2017b}:
\begin{subequations}
{\allowdisplaybreaks
\begin{eqnarray}
& & \hspace{-10mm}
\big[\Phi^{(-)[\frac12, \frac12]}_1
+
\Phi^{(-)[\frac12, \frac12]}_2\big](\tau, z_1, z_2,0) 
\, = \, 
i \,\ \frac{
\eta(\tau)^3 \vartheta_{11}(\tau, z_1+z_2) \, 
\vartheta_{11}\Big(\tau, \dfrac{z_1-z_2}{2}\Big)
}{
\vartheta_{11}(\tau, z_1) \, 
\vartheta_{11}(\tau, z_2) \, 
\vartheta_{11}\Big(\tau, \dfrac{z_1+z_2}{2}\Big)}
\label{n3:eqn:2022-823b}
\\[1mm]
& &
= \,\ i \,\ \frac{\eta(\tau)^2}{\eta(2\tau)} \cdot
\frac{\vartheta_{01}(2\tau, z_1+z_2) \, 
\vartheta_{10}\Big(\tau, \dfrac{z_1+z_2}{2}\Big)
\vartheta_{11}\Big(\tau, \dfrac{z_1-z_2}{2}\Big)
}{
\vartheta_{11}(\tau, z_1) \, 
\vartheta_{11}(\tau, z_2)}
\label{n3:eqn:2022-823b2}
\end{eqnarray}}
\end{subequations}
Letting $z_1=z$ and $z_2=-z+2a\tau$ in this equation 
\eqref{n3:eqn:2022-823b2}, we have 
\begin{subequations}
{\allowdisplaybreaks
\begin{eqnarray}
& & \hspace{-20mm}
\big[\Phi^{(-)[\frac12, \frac12]}_1
+
\Phi^{(-)[\frac12, \frac12]}_2\big](\tau, z, -z+2a\tau, 0)
\nonumber
\\[2mm]
&=&
i \,\ \dfrac{\eta(\tau)^2}{\eta(2\tau)} \cdot 
\frac{
\vartheta_{01}(2\tau, 2a\tau)\vartheta_{10}(\tau, a\tau)
\vartheta_{11}(\tau, z-a\tau)
}{\vartheta_{11}(\tau, z) \vartheta_{11}(\tau, -z+2a\tau)} \hspace{10mm}
\nonumber
\\[1mm]
&=& 
- \, 2i \, e^{-2\pi iaz} \, \dfrac{\eta(\tau)^3}{\vartheta_{11}(\tau,z)}
\label{n3:eqn:2022-823c}
\end{eqnarray}}
since
$$ \hspace{-24.5mm}
\left\{
\begin{array}{lcr}
\vartheta_{10}(\tau, a\tau) &=& 2 \, q^{-\frac12a^2} 
\dfrac{\eta(2\tau)^2}{\eta(\tau)} 
\\[5mm]
\vartheta_{01}(\tau, a\tau) &=& (-1)^a q^{-\frac12a^2} 
\dfrac{\eta(\frac{\tau}{2})^2}{\eta(\tau)}
\end{array}\right.
$$
and
$$
\left\{
\begin{array}{lcrl}
\vartheta_{11}(\tau, z-a\tau) &=& 
(-1)^a q^{-\frac12 a^2}e^{2\pi iaz}\vartheta_{11}(\tau, z) &
\\[3mm]
\vartheta_{11}(\tau, -z+2a\tau) &=& 
-q^{-2 a^2}e^{4\pi iaz}\vartheta_{11}(\tau, z) & .
\end{array}\right.  
$$
Also, by Lemma 2.5 in \cite{W2022c}, we have
\begin{equation}
\Phi^{(-)[\frac12,\frac12]}_1(\tau, z, -z+2a\tau,0) 
=
\Phi^{(-)[\frac12,\frac12]}_2(\tau, z, -z+2a\tau,0) 
\label{n3:eqn:2022-823d}
\end{equation} 
\end{subequations}
Then Lemma \ref{n3:lemma:2022-823a} follows from 
\eqref{n3:eqn:2022-823c} and \eqref{n3:eqn:2022-823d}.
\end{proof}

\vspace{0mm}

Following formulas for theta functions can be shown easily by 
using Lemma 1.1 in \cite{W2022c} and Note 1.1 in \cite{W2022c}, 
and will be used in the proof of Lemma \ref{n3:lemma:2022-825a}.

\medskip

\begin{note} 
\label{n3:note:2022-825a}
For $p \in \zzz$, the following formulas hold:
\begin{enumerate}
\item[{\rm 1)}] \quad $\vartheta_{11}(2\tau, \, z+\frac{\tau}{2}-\frac12+p\tau) 
\,\ = \,\ 
q^{-\frac14(p+\frac12)^2} \, e^{-\pi i(p+\frac12)z}\theta_{p-\frac12, 1}(\tau,z)$
\item[{\rm 2)}] \quad $\vartheta_{11}(2\tau, \, z-\frac{\tau}{2}+\frac12-p\tau) 
\,\ = \,\ - \, 
q^{-\frac14(p+\frac12)^2} \, e^{\pi i(p+\frac12)z}\theta_{-p+\frac12, 1}(\tau,z)$
\end{enumerate}
\end{note}

\vspace{0mm}

\begin{note} 
\label{n3:note:2022-825b}
For $m \in \nnn$ and $p \in \zzz$, the following formulas hold:
\begin{enumerate}
\item[{\rm 1)}]
\begin{enumerate}
\item[{\rm (i)}] \quad $\theta_{-1, m+1}^{(-)}
\Big(\tau, \, z+\dfrac{(2p+1)\tau-1}{2(m+1)}\Big)
\,\ = \,\ 
e^{\frac{\pi i}{2(m+1)}} \, q^{-\frac{(2p+1)^2}{16(m+1)}} \, 
e^{-\frac{\pi i(2p+1)z}{2}} \, 
\theta_{p-\frac12, m+1}(\tau,z)$
\item[{\rm (ii)}] \quad $\theta_{-1, m+1}^{(-)}
\Big(\tau, \, z+\dfrac{-(2p+1)\tau+1}{2(m+1)}\Big)
\,\ = \,\ 
e^{\frac{\pi i}{2(m+1)}} \, q^{-\frac{(2p+1)^2}{16(m+1)}} \, 
e^{\frac{\pi i(2p+1)z}{2}} \, 
\theta_{-p+\frac12, m+1}(\tau,z)$
\end{enumerate}
\item[{\rm 2)}]
\begin{enumerate}
\item[{\rm (i)}] \quad $\theta_{0, m+1}^{(-)}
\Big(\tau, \, z+\dfrac{(2p+1)\tau-1}{2(m+1)}\Big)
\,\ = \,\ 
q^{-\frac{(2p+1)^2}{16(m+1)}} \, 
e^{-\frac{\pi i(2p+1)z}{2}} \, 
\theta_{p+\frac12, m+1}(\tau,z)$
\item[{\rm (ii)}] \quad $\theta_{0, m+1}^{(-)}
\Big(\tau, \, z+\dfrac{-(2p+1)\tau+1}{2(m+1)}\Big)
\,\ = \,\ 
q^{-\frac{(2p+1)^2}{16(m+1)}} \, 
e^{\frac{\pi i(2p+1)z}{2}} \, 
\theta_{-p-\frac12, m+1}(\tau,z)$
\end{enumerate}
\end{enumerate}
\end{note}

\vspace{1mm}

\begin{note} 
\label{n3:note:2022-825c}
For $m \in \nnn$ and $p \in \zzz$, the following formulas hold:
\begin{enumerate}
\item[{\rm 1)}] \,\ $\theta_{1, m+1}^{(-)}\Big(\tau, \, 
\dfrac{m(2p+1)\tau-m}{2(m+1)}\Big) 
\,\ = \,\ 
e^{-\frac{\pi im}{2(m+1)}} \, 
q^{-\frac{m^2}{16(m+1)}(2p+1)^2} \, 
\theta_{1+m(p+\frac12), m+1}^{(\sigma(m))}(\tau,0)$
\item[{\rm 2)}] \,\ $\theta_{0, m+1}^{(-)}\Big(\tau, \, 
\dfrac{m(2p+1)\tau-m}{2(m+1)}\Big) 
\,\ = \,\ 
q^{-\frac{m^2}{16(m+1)}(2p+1)^2} \, 
\theta_{m(p+\frac12), m+1}^{(\sigma(m))}(\tau,0)$
\end{enumerate}
where 
\begin{equation}
\sigma(m) \,\ := \,\ \left\{
\begin{array}{cccl}
- & & {\rm if} & m \, \in \, \nnn_{\rm even} \\[1mm]
+ & & {\rm if} & m \, \in \, \nnn_{\rm odd} 
\end{array}\right. 
\label{n3:eqn:2022-825b}
\end{equation}
\end{note}

\section{Explicit formulas for $\Phi^{[m,s]}$}
\label{sec:explicit:Phi}

\subsection{$\Phi^{[m,0]}$}
\label{subsec:explicit:Phi[m,0]}

\medskip

\begin{lemma}
\label{n3:lemma:2022-823b}
For $m \in \frac12 \nnn$, the following formulas hold:
\begin{subequations}
\begin{enumerate}
\item[{\rm 1)}] $\sum\limits_{j \in \zzz} (-1)^j
q^{(m+\frac12)j^2+\frac12 j} \, 
e^{\pi ij(z_1-z_2)-2\pi ijm(z_1-z_3)} \, 
\Phi^{[m,0]}_1(\tau, \, z_1, \, -z_3-2j\tau, \, 0)$
\begin{eqnarray}
&=& 
e^{-\pi iz_1} \sum_{k \in \zzz} (-1)^k
q^{(m+\frac12)k^2-\frac12 k} \, 
e^{-\pi ik(z_1-z_2)+2\pi ikm(z_1-z_3)} 
\nonumber
\\[1mm]
& & \hspace{15mm}
\times \,\ 
\Phi^{(-)[\frac12,\frac12]}_1(\tau, \, z_1, \, -z_2-2k\tau, \, 0)
\label{n3:eqn:2022-823h1}
\end{eqnarray}

\item[{\rm 2)}] $\sum\limits_{j \in \zzz} (-1)^j
q^{(m+\frac12)j^2+\frac12 j} \, 
e^{\pi ij(z_1-z_2)-2\pi ijm(z_1-z_3)} \, 
\Phi^{[m,0]}_2(\tau, \, z_1, \, -z_3-2j\tau, \, 0)$
{\allowdisplaybreaks
\begin{eqnarray}
&=&
e^{-\pi i z_3} \sum\limits_{k \in \zzz} (-1)^k
q^{(m+\frac12)k^2-\frac12 k} \, 
e^{-\pi ik(z_1-z_2)-2\pi ikm(z_1-z_3)} 
\nonumber
\\[1mm]
& & \hspace{15mm}
\times \,\ 
\Phi^{(-)[\frac12,\frac12]}_1(\tau, \, z_3, \, z_1-z_2-z_3-2k\tau, \, 0)
\label{n3:eqn:2022-823h2}
\end{eqnarray}}
\end{enumerate}
\end{subequations}
\end{lemma}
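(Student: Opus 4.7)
The plan is to interpret both sides of formula 1) as two different orders of summation applied to the double lattice sum defining $F_{\frac{a}{2}\Lambda_0+\frac14\alpha_2}$ from \eqref{n3:eqn:2022-826a1}, and then to derive 2) from 1) via the reflection $r_3$ using the relations in \eqref{n3:eqn:2022-826a2}. Concretely, I would start by writing the action of the translation $t_{j\alpha_2^{\vee}+k\alpha_3^{\vee}}$ on $\frac{e^{\frac{a}{2}\Lambda_0+\frac14\alpha_2}}{1-e^{-\alpha_1}}$ in the realization of the root lattice of $D(2,1;-\tfrac{2m}{2m+1})$ used in \cite{W2022b}, thereby expressing the entire double sum as a $q$-series in the three variables $z_1,z_2,z_3$ whose exponents split cleanly into a quadratic form in $(j,k)$ plus linear pieces.

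Next, I would perform the summation over $k$ alone, keeping $j$ fixed. By the definition of $\Phi^{[m,0]}_1$ recalled through \eqref{n3:eqn:2022-823e1}, this partial sum equals $\Phi^{[m,0]}_1(\tau,z_1,-z_3-2j\tau,0)$, and the leftover $j$-dependent prefactor is precisely $(-1)^j q^{(m+\frac12)j^2+\frac12 j}e^{\pi ij(z_1-z_2)-2\pi ijm(z_1-z_3)}$. Summing now over $j$ reproduces the left-hand side of formula 1). In the other order, I would fix $k$ and sum over $j$ first: here the $j$-sum matches the defining formula of $\Phi^{(-)[\frac12,\frac12]}_1$ evaluated at $(\tau,z_1,-z_2-2k\tau,0)$ (this is exactly the combination entering Kac-Peterson's identity in Lemma \ref{n3:lemma:2022-823a}), and the residual $k$-dependent factor collects to $e^{-\pi iz_1}(-1)^k q^{(m+\frac12)k^2-\frac12 k}e^{-\pi ik(z_1-z_2)+2\pi ikm(z_1-z_3)}$. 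Equating the two evaluations of the double sum yields formula 1).

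For formula 2) the cleanest route is to apply the odd simple reflection $r_3$ (or the analogous coordinate change it induces on $z_1,z_2,z_3$) to formula 1). By \eqref{n3:eqn:2022-826a2} this converts $F^{[m,0]}_1$ into $F^{[m,0]}_2$ on the left, while on the right $\Phi^{(-)[\frac12,\frac12]}_1(\tau,z_1,-z_2-2k\tau,0)$ is transformed into $\Phi^{(-)[\frac12,\frac12]}_1(\tau,z_3,z_1-z_2-z_3-2k\tau,0)$ because $r_3$ exchanges $z_1$ and $z_3$ in the appropriate sum-of-arguments sense (as recorded in the preceding papers \cite{W2022a,W2022c}). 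The prefactors transform accordingly, producing exactly the right-hand side of 2).

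The main obstacle is bookkeeping: one must carefully track the quadratic exponents in $q$ produced by $t_{j\alpha_2^{\vee}+k\alpha_3^{\vee}}$ together with the $(-1)^j$ sign, and verify that after completing squares they split as advertised into the $\Phi^{[m,0]}$-prefactor times the $\Phi^{[m,0]}$ argument $-z_3-2j\tau$ in one grouping and into the $\Phi^{(-)[\frac12,\frac12]}_1$-prefactor times the argument $-z_2-2k\tau$ in the other. A secondary subtlety is to check the $r_3$-transformation law for $\Phi^{(-)[\frac12,\frac12]}_1$ so that the ``shifted'' arguments $(z_3,\,z_1-z_2-z_3-2k\tau)$ appearing in 2) are correctly produced; this should follow directly from the symmetry property used in the proof of \eqref{n3:eqn:2022-823d} together with the substitution $z_2 \leftrightarrow z_1-z_3$ inherent in $r_3$.
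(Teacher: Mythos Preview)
Your approach for part 1) is essentially identical to the paper's: both evaluate the double sum $F^{[m,0]}_1$ of \eqref{n3:eqn:2022-823e1} in two ways. The paper makes the change of variable $r=j+k$ explicit and then sums over $r$ with $j$ fixed (yielding $\Phi^{[m,0]}_1$) versus $r$ with $k$ fixed (yielding $\Phi^{(-)[\frac12,\frac12]}_1$); your ``sum over $k$ first / sum over $j$ first'' is the same thing in slightly different language.

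For part 2) there is a minor divergence. The paper does not apply $r_3$ to the already-proved identity 1); instead it writes down $F^{[m,0]}_2$ explicitly (equation \eqref{n3:eqn:2022-823e2}) and computes it in two ways by the same $r=j+k$ substitution. The first grouping produces $\Phi^{[m,0]}_1(\tau,z_3,-z_1-2j\tau,0)$, which is then converted to $\Phi^{[m,0]}_2(\tau,z_1,-z_3-2j\tau,0)$ via Lemma 2.3 of \cite{W2022b}; the second grouping gives the right-hand side of 2) directly. Your proposed route---hit formula 1) with $r_3$---is morally the same, since $F^{[m,0]}_2=r_3(F^{[m,0]}_1)$ by \eqref{n3:eqn:2022-826a2}, but to carry it out concretely you would still need the symmetry of Lemma 2.3 in \cite{W2022b} to pass from $\Phi^{[m,0]}_1$ with swapped arguments to $\Phi^{[m,0]}_2$ with the original ones. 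So the ``secondary subtlety'' you flag is exactly the place where the paper invokes that lemma; there is no shortcut around it, and the paper's direct computation is probably cleaner than tracking the abstract $r_3$-action on all the prefactors.
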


\begin{proof} To prove this lemma, we consider the following two functions:
\begin{subequations}
{\allowdisplaybreaks
\begin{eqnarray}
F^{[m,0]}_1 &:=& 
e^{\frac{\pi i}{2}(z_1-z_2)}
\sum\limits_{j, \, k \, \in \, \zzz} (-1)^{j} \, 
\dfrac{e^{\pi ij(z_1-z_2)+2\pi ikm(z_1-z_3)} \, q^{\frac12 j^2+mk^2+\frac12 j}
}{1-e^{2\pi iz_1} \, q^{j+k}}
\label{n3:eqn:2022-823e1}
\\[2mm]
F^{[m,0]}_2 &:=& 
e^{\frac{\pi i}{2}(z_1-z_2)}
\sum\limits_{j, \, k \, \in \, \zzz} (-1)^{j} \, 
\dfrac{e^{\pi ij(z_1-z_2)-2\pi ikm(z_1-z_3)} \, q^{\frac12 j^2+mk^2+\frac12 j}
}{1-e^{2\pi iz_3} \, q^{j+k}}
\label{n3:eqn:2022-823e2}
\end{eqnarray}}
\end{subequations}
We compute the RHS's of these functions by putting $j+k=r$ as follows.

\medskip

\noindent
To prove 1), as the 1st step, we compute the RHS of \eqref{n3:eqn:2022-823e1} 
by replacing $(j,k)$ with $(j,r)$: 
\begin{subequations}
{\allowdisplaybreaks
\begin{eqnarray}
& & \hspace{-10mm}
F^{[m,0]}_1 \, = \, 
e^{\frac{\pi i}{2}(z_1-z_2)}
\sum_{j, \, r \, \in \, \zzz} (-1)^j
\frac{e^{\pi ij(z_1-z_2)+2\pi im(r-j)(z_1-z_3)} \, 
q^{\frac12 j^2+ m(r-j)^2 + \frac12 j}
}{1-e^{2\pi iz_1} \, q^r}
\nonumber
\\[3mm]
&=&
e^{\frac{\pi i}{2}(z_1-z_2)}
\sum_{j \in \zzz} (-1)^j \, 
q^{(m+\frac12)j^2+\frac12 j} \, 
e^{\pi ij(z_1-z_2)-2\pi ijm(z_1-z_3)} 
\nonumber
\\[-2mm]
& &\hspace{20mm}
\times \,\ 
\underbrace{\sum_{r \in \zzz} \frac{
e^{2\pi imr(z_1-z_3-2j\tau)}
\, q^{mr^2}}{1-e^{2\pi iz_1} \, q^r}}_{\substack{|| \\[-2mm] 
{\displaystyle \hspace{-2mm}
\Phi^{[m,0]}_1(\tau, \, z_1, \, -z_3-2j\tau, \, 0)
}}}
\label{n3:eqn:2022-823f1}
\end{eqnarray}}

As the 2nd step, we compute the RHS of \eqref{n3:eqn:2022-823e1} 
by replacing $(j,k)$ with $(k,r)$: 
{\allowdisplaybreaks
\begin{eqnarray}
& & \hspace{-10mm}
F^{[m,0]}_1 \, = \, 
e^{\frac{\pi i}{2}(z_1-z_2)}
\sum_{k, \, r \, \in \, \zzz} (-1)^{k+r} 
\dfrac{e^{\pi i(r-k)(z_1-z_2)+2\pi ikm(z_1-z_3)} 
q^{\frac12 (r-k)^2+mk^2+\frac12(r-k)}
}{1-e^{2\pi iz_1} \, q^r}
\nonumber
\\[2mm]
&=&
e^{-\frac{\pi i}{2}(z_1+z_2)}
\sum_{k \in \zzz} (-1)^k
q^{(m+\frac12)k^2-\frac12 k} \, 
e^{-\pi ik(z_1-z_2)+2\pi ikm(z_1-z_3)} 
\nonumber
\\[-2mm] 
& & \hspace{20mm}
\times \,\ 
\underbrace{\sum_{r \in \zzz} (-1)^r \, \frac{
e^{\pi ir(z_1-z_2-2k\tau)}
\, e^{\pi iz_1}
\, q^{\frac12 r^2+\frac12 r}
}{1-e^{2\pi iz_1} \, q^r} }_{\substack{|| \\[-2mm] {\displaystyle 
\Phi^{(-)[\frac12,\frac12]}_1(\tau, \, z_1, \, -z_2-2k\tau, \, 0)
}}}
\label{n3:eqn:2022-823f2}
\end{eqnarray}}
\end{subequations}
Then by \eqref{n3:eqn:2022-823f1} and \eqref{n3:eqn:2022-823f2} 
we have 
{\allowdisplaybreaks
\begin{eqnarray*}
& &
e^{\frac{\pi i}{2}(z_1-z_2)}
\sum\limits_{j \in \zzz} (-1)^j
q^{(m+\frac12)j^2+\frac12 j} \, 
e^{\pi ij(z_1-z_2)-2\pi ijm(z_1-z_3)} \, 
\Phi^{[m,0]}_1(\tau, \, z_1, \, -z_3-2j\tau, \, 0)
\\[1mm]
&=&
e^{-\frac{\pi i}{2}(z_1+z_2)}
\sum_{k \in \zzz} (-1)^k
q^{(m+\frac12)k^2-\frac12 k} \, 
e^{-\pi ik(z_1-z_2)+2\pi ikm(z_1-z_3)} \, 
\Phi^{(-)[\frac12,\frac12]}_1(\tau, \, z_1, \, -z_2-2k\tau, \, 0)
\end{eqnarray*}}
proving 1).

\medskip

In order to prove 2), first we compute the RHS of \eqref{n3:eqn:2022-823e2} 
by replacing $(j,k)$ with $(j,r)$: 
\begin{subequations}
{\allowdisplaybreaks
\begin{eqnarray}
& & \hspace{-10mm}
F^{[m,0]}_2 \, = \, 
e^{\frac{\pi i}{2}(z_1-z_2)}
\sum_{j, \, r \, \in \, \zzz} (-1)^j \, 
\dfrac{e^{\pi ij(z_1-z_2)-2\pi im(r-j)(z_1-z_3)} \, 
q^{\frac12 j^2+ m(r-j)^2 + \frac12 j}}{1-e^{2\pi iz_3} \, q^r}
\nonumber
\\[2mm]
&=&
e^{\frac{\pi i}{2}(z_1-z_2)}
\sum_{j \in \zzz} (-1)^j 
q^{(m+\frac12)j^2+\frac12 j} 
e^{\pi ij(z_1-z_2)+2\pi ijm(z_1-z_3)} 
\underbrace{\sum_{r \in \zzz} \frac{
e^{2\pi imr(-z_1+z_3-2j\tau)} q^{mr^2}}{1-e^{2\pi iz_3} \, q^r}
}_{\substack{|| \\[-1.5mm] 
{\displaystyle \Phi^{[m,0]}_1(\tau, \, z_3, \, -z_1-2j\tau, \, 0)
}}}
\nonumber
\\[2mm]
&=&
e^{\frac{\pi i}{2}(z_1-z_2)}
\sum_{j \in \zzz} (-1)^j
q^{(m+\frac12)j^2+\frac12 j} \, 
e^{\pi ij(z_1-z_2)-2\pi ijm(z_1-z_3)} \, 
\Phi^{[m,0]}_2(\tau, \, z_1, \, -z_3-2j\tau, \, 0)
\nonumber
\\[-2mm]
& &
\label{n3:eqn:2022-823g1}
\end{eqnarray}}
where we used Lemma 2.3 in \cite{W2022b}.

\medskip

Next we compute the RHS of \eqref{n3:eqn:2022-823e2} 
by replacing $(j,k)$ with $(k,r)$: 
{\allowdisplaybreaks
\begin{eqnarray}
& & \hspace{-10mm}
F^{[m,0]}_2 \, = \, 
e^{\frac{\pi i}{2}(z_1-z_2)}
\sum_{k, \, r \, \in \, \zzz} (-1)^{k+r}
\dfrac{e^{\pi i(r-k)(z_1-z_2)-2\pi ikm(z_1-z_3)} \, 
q^{\frac12 (r-k)^2+mk^2+\frac12(r-k)}}{1-e^{2\pi iz_3} \, q^r}
\nonumber
\\[2mm]
&=&
e^{\frac{\pi i}{2}(z_1-z_2-2z_3)}
\sum_{k \in \zzz} (-1)^k
q^{(m+\frac12)k^2-\frac12 k} \, 
e^{-\pi ik(z_1-z_2)-2\pi ikm(z_1-z_3)} 
\nonumber
\\[-2mm]
& & \hspace{20mm}
\times \,\ 
\underbrace{\sum_{r \in \zzz} (-1)^r \, \frac{
e^{\pi ir(z_1-z_2-2k\tau)}\, e^{\pi iz_3}
\, q^{\frac12 r^2+\frac12 r}
}{1-e^{2\pi iz_3} \, q^r} }_{\substack{|| \\[-2mm] 
{\displaystyle 
\Phi^{(-)[\frac12,\frac12]}_1(\tau, \, z_3, \, z_1-z_2-z_3-2k\tau, \, 0)
}}}
\label{n3:eqn:2022-823g2}
\end{eqnarray}}
\end{subequations}
Then by \eqref{n3:eqn:2022-823g1} and \eqref{n3:eqn:2022-823g2} 
we have 
{\allowdisplaybreaks
\begin{eqnarray*}
& &
e^{\frac{\pi i}{2}(z_1-z_2)}
\sum_{j \in \zzz} (-1)^j
q^{(m+\frac12)j^2+\frac12 j} \, 
e^{\pi ij(z_1-z_2)-2\pi ijm(z_1-z_3)} \, 
\Phi^{[m,0]}_2(\tau, \, z_1, \, -z_3-2j\tau, \, 0)
\\[2mm]
&=&
e^{\frac{\pi i}{2}(z_1-z_2-2z_3)}
\sum_{k \in \zzz} (-1)^k
q^{(m+\frac12)k^2-\frac12 k} \, 
e^{-\pi ik(z_1-z_2)-2\pi ikm(z_1-z_3)} 
\nonumber
\\[-2mm]
& & \hspace{35mm}
\times \,\ 
\Phi^{(-)[\frac12,\frac12]}_1(\tau, \, z_3, \, z_1-z_2-z_3-2k\tau, \, 0)
\end{eqnarray*}}
proving 2).
\end{proof}

\vspace{1mm}

\begin{lemma} 
\label{n3:lemma:2022-823c}
For $m \in \frac12 \, \nnn$, the following formula holds:
{\allowdisplaybreaks
\begin{eqnarray}
& & 
e^{\frac{\pi im}{m+\frac12}z_1}
\sum_{j \in \zzz}(-1)^j
q^{(m+\frac12)(j-\frac{1}{4(m+\frac12)})^2}
e^{2\pi im(j-\frac{1}{4(m+\frac12)})(z_1+z_2)}
\Phi^{[m,0]}(\tau, \,\ z_1, \,\ z_2+2j\tau, \,\ 0)
\nonumber
\\[0mm]
&=&
- i \, \eta(\tau)^3 \bigg\{
\frac{ \displaystyle 
\theta_{-\frac12, \, m+\frac12}^{(-)}\Big(\tau, \, z_1+z_2+
\frac{z_1-z_2}{2m+1}\Big)}{\vartheta_{11}(\tau, z_1)}
+ 
\frac{ \displaystyle 
\theta_{-\frac12, \, m+\frac12}^{(-)}\Big(\tau, \, z_1+z_2+
\frac{z_2-z_1}{2m+1}\Big)}{\vartheta_{11}(\tau, z_2)}
\bigg\}
\label{n3:eqn:2022-823m}
\end{eqnarray}}
\end{lemma}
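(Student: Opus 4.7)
The plan is to start from Lemma~\ref{n3:lemma:2022-823b}, specialize the auxiliary variable by setting $z_3 = -z_2$, and reindex the summation via $j \mapsto -j$ so that the arguments of $\Phi^{[m,0]}_1$ and $\Phi^{[m,0]}_2$ take the common form $(\tau, z_1, z_2+2j\tau, 0)$ that appears on the LHS of \eqref{n3:eqn:2022-823m}. Adding the two resulting identities combines their left-hand sides into a single sum involving $\Phi^{[m,0]} = \Phi^{[m,0]}_1 + \Phi^{[m,0]}_2$, with weight $(-1)^j q^{(m+\frac12)j^2 - \frac12 j}\, e^{-\pi ij(z_1-z_2) + 2\pi imj(z_1+z_2)}$. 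By completing the square $(m+\frac12)j^2 - \frac12 j = (m+\frac12)(j-\frac{1}{4(m+\frac12)})^2 - \frac{1}{16(m+\frac12)}$ and pulling out the prefactor $q^{\frac{1}{16(m+\frac12)}} e^{\frac{\pi im(z_1-z_2)}{2m+1}}$, the combined LHS matches that of the target (up to the global factor $e^{\frac{\pi im}{m+\frac12} z_1}$).

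On the right-hand side I would apply the Kac-Peterson identity (Lemma~\ref{n3:lemma:2022-823a}) to the two series $\sum_k (\cdots)\,\Phi^{(-)[\frac12,\frac12]}_1(\tau, z_1, -z_2-2k\tau, 0)$ and $\sum_k (\cdots)\,\Phi^{(-)[\frac12,\frac12]}_1(\tau, -z_2, z_1-2k\tau, 0)$. Since the second slot is not literally of the form $-z+2a\tau$ with $a \in \zzz$ (the effective shift being $(z_1-z_2)/(2\tau)-k$, non-integer in general), I would expand each $\Phi^{(-)[\frac12,\frac12]}_1$ via its defining series in an inner index $r$, swap the order of summation so that $\sum_k$ is done first, and complete the square in $k$ against the weight $q^{(m+\frac12)k^2}$. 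The center of the square separates into an integer lattice part, which is re-absorbed by a translation of the summation variable and then fed to Kac-Peterson with integer $a$, and a fractional part $\frac{z_1-z_2}{2m+1}$, which survives as the argument shift of a residual level-$(m+\frac12)$ theta function. Kac-Peterson then produces the factor $-i\eta(\tau)^3/\vartheta_{11}(\tau, z_1)$ from part~1) and, via $\vartheta_{11}(\tau, -z_2) = -\vartheta_{11}(\tau, z_2)$, the factor $i\eta(\tau)^3/\vartheta_{11}(\tau, z_2)$ from part~2).

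The residual $k$-sums then collapse to $\theta^{(-)}_{-\frac12, m+\frac12}(\tau, z_1+z_2+\frac{z_1-z_2}{2m+1})$ and $\theta^{(-)}_{-\frac12, m+\frac12}(\tau, z_1+z_2+\frac{z_2-z_1}{2m+1})$ respectively, the swap in the argument reflecting the $z_1 \leftrightarrow -z_2$ exchange between the two $\Phi^{(-)[\frac12,\frac12]}_1$ slots; assembling all accumulated phases against the LHS prefactor closes the identity. The main obstacle lies in this middle step: Kac-Peterson as stated requires integer $a$, but the lattice sum naturally forces a fractional effective shift, so the double sum must be carefully partitioned into a theta-function piece (carrying the fractional shift $\frac{z_1-z_2}{2m+1}$) and a Kac-Peterson piece (carrying the integer part). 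This is precisely the coroot-lattice calculation on $D(2,1;\frac{-2m}{2m+1})$ alluded to around \eqref{n3:eqn:2022-826a2}, and the tracking of the sign factors $(-1)^{j+k}$, the $\vartheta_{11}$-antisymmetry, and the various $e^{\pi i (\cdots)}$ phases through the reordering is the most delicate part of the argument.
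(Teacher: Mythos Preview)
Your specialization is the wrong one, and this is not a cosmetic issue: it is precisely why you end up unable to apply Kac--Peterson directly. In Lemma~\ref{n3:lemma:2022-823b} there are three free variables $z_1,z_2,z_3$. The paper does \emph{not} set $z_3=-z_2$; it takes the difference \eqref{n3:eqn:2022-823h1}$-$\eqref{n3:eqn:2022-823h2} (so that $\Phi^{[m,0]}=\Phi^{[m,0]}_1-\Phi^{[m,0]}_2$, not the sum you wrote), rewrites the result as \eqref{n3:eqn:2022-823k}, and then specializes $z_1=z_2$. Under that choice the two right-hand factors become
\[
\Phi^{(-)[\frac12,\frac12]}_1(\tau,\,z_1,\,-z_1+2k\tau,\,0)
\quad\text{and}\quad
\Phi^{(-)[\frac12,\frac12]}_1(\tau,\,z_3,\,-z_3+2k\tau,\,0),
\]
each literally of the form $\Phi^{(-)[\frac12,\frac12]}_1(\tau,z,-z+2a\tau,0)$ with $a=k\in\zzz$, so Lemma~\ref{n3:lemma:2022-823a} applies termwise with no workaround. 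The residual $k$-sums are then genuine level-$(m+\tfrac12)$ theta series, and the final renaming $-z_3\to z_2$ produces the statement. Your choice $z_3=-z_2$ leaves the arguments as $(\tau,z_1,-z_2-2k\tau,0)$ and $(\tau,-z_2,z_1-2k\tau,0)$, which are \emph{not} in Kac--Peterson form unless $z_1=z_2$.

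Your proposed workaround --- expanding $\Phi^{(-)[\frac12,\frac12]}_1$ in its inner summation index $r$, swapping $\sum_k\sum_r$, completing the square in $k$, and then ``feeding the integer part back to Kac--Peterson'' --- is not a coherent procedure. Once you have expanded $\Phi^{(-)[\frac12,\frac12]}_1$ into its $r$-series there is no $\Phi^{(-)[\frac12,\frac12]}_1$ left to which Lemma~\ref{n3:lemma:2022-823a} could be applied; you would simply be back at the double sum $F^{[m,0]}_i$ of \eqref{n3:eqn:2022-823e1}--\eqref{n3:eqn:2022-823e2}, which is where Lemma~\ref{n3:lemma:2022-823b} started. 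The ``integer lattice part / fractional part $\frac{z_1-z_2}{2m+1}$'' split you describe does not exist for this sum: the completed-square center in $k$ depends on the inner index $r$, not just on $z_1-z_2$, so no uniform translation produces a Kac--Peterson input. The key idea you are missing is that the \emph{auxiliary} variable $z_2$ in Lemma~\ref{n3:lemma:2022-823b} is there precisely to be collapsed onto $z_1$, making Kac--Peterson exact; the two-variable identity is then recovered from the remaining pair $(z_1,z_3)$.
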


\begin{proof} Computing the difference \lq \lq ${\rm \eqref{n3:eqn:2022-823h1}}
-{\rm \eqref{n3:eqn:2022-823h2}}$", 
we have the following formula for $m \in \frac12 \nnn$:
\begin{subequations}
{\allowdisplaybreaks
\begin{eqnarray}
& & 
\sum_{j \in \zzz} (-1)^j
q^{(m+\frac12)j^2+\frac12 j}
e^{\pi ij(z_1-z_2)-2\pi ijm(z_1-z_3)} \, 
\Phi^{[m,0]}(\tau, \, z_1, \, -z_3-2j\tau, \, 0) 
\nonumber
\\[2mm]
&=& 
e^{-\pi iz_1}\sum_{k \in \zzz} (-1)^k
q^{(m+\frac12)k^2-\frac12 k} 
e^{-\pi ik(z_1-z_2)+2\pi ikm(z_1-z_3)} 
\Phi_1^{(-)[\frac12,\frac12]}(\tau, \, z_1, \, -z_2-2k\tau, \, 0)
\nonumber
\\[1mm]
& & \hspace{-5mm}
- \,\ e^{-\pi iz_3} \sum_{k \in \zzz} (-1)^k
q^{(m+\frac12)k^2-\frac12 k} 
e^{-\pi ik(z_1-z_2)-2\pi ikm(z_1-z_3)} 
\nonumber
\\[1mm]
& & \hspace{20mm}
\times \,\ \Phi_1^{(-)[\frac12,\frac12]}
(\tau, \, z_3, \, z_1-z_2-z_3-2k\tau, \, 0)
\label{n3:eqn:2022-823j}
\end{eqnarray}}
This formula \eqref{n3:eqn:2022-823j} is rewritten as follows:
{\allowdisplaybreaks
\begin{eqnarray}
& &
e^{\frac{\pi im}{m+\frac12}z_1}
\sum_{j \in \zzz}(-1)^j
q^{(m+\frac12)(j-\frac{1}{4(m+\frac12)})^2}
e^{2\pi i(j-\frac{1}{4(m+\frac12)})\{-\frac12 (z_1-z_2)+m(z_1-z_3)\}}
\nonumber
\\[0mm]
& & \hspace{20mm}
\times \,\ \Phi^{[m,0]}(\tau, \,\ z_1, \,\ -z_3+2j\tau, \,\ 0)
\nonumber
\\[3mm]
&=&
e^{ - \frac{\pi i}{2(m+\frac12)} z_1}
\sum_{k \in \zzz} (-1)^k \, 
q^{(m+\frac12)(k+\frac{1}{4(m+\frac12)})^2}
e^{2\pi i(k+\frac{1}{4(m+\frac12)})\{\frac12 (z_1-z_2)-m(z_1-z_3)\}}
\nonumber
\\[0mm]
& & \hspace{20mm}
\times \,\ 
\Phi^{(-)[\frac12, \frac12]}_1
(\tau, \,\ z_1, \,\ -z_2+2k\tau, \,\ 0)
\nonumber
\\[3mm]
& & \hspace{-5mm}
- \,\ e^{ - \frac{\pi i}{2(m+\frac12)} z_3}
\sum_{k \in \zzz} (-1)^k \, 
q^{(m+\frac12)(k+\frac{1}{4(m+\frac12)})^2}
e^{2\pi i(k+\frac{1}{4(m+\frac12)})\{\frac12 (z_1-z_2)+m(z_1-z_3)\}}
\nonumber
\\[0mm]
& & \hspace{20mm}
\times \,\ 
\Phi^{(-)[\frac12, \frac12]}_1
(\tau, \,\ z_3, \,\ z_1-z_2-z_3+2k\tau, \,\ 0)
\label{n3:eqn:2022-823k}
\end{eqnarray}}
\end{subequations}
Letting $z_1=z_2$ in this formula \eqref{n3:eqn:2022-823k} and using 
Lemma \ref{n3:lemma:2022-823a}, we have
{\allowdisplaybreaks
\begin{eqnarray}
& &
e^{\frac{\pi im}{m+\frac12}z_1}
\sum_{j \in \zzz}(-1)^j
q^{(m+\frac12)(j-\frac{1}{4(m+\frac12)})^2}
e^{2\pi im(j-\frac{1}{4(m+\frac12)})(z_1-z_3)}
\Phi^{[m,0]}(\tau, \, z_1, \, -z_3+2j\tau, \, 0)
\nonumber
\\[3mm]
&=&
e^{-\frac{\pi i}{2(m+\frac12)} z_1}
\sum_{k \in \zzz} (-1)^k \, 
q^{(m+\frac12)(k+\frac{1}{4(m+\frac12)})^2}
e^{-2\pi im(k+\frac{1}{4(m+\frac12)})(z_1-z_3)}
\nonumber
\\[0mm]
& & \hspace{30mm}
\times \,\ 
\underbrace{\Phi^{(-)[\frac12, \frac12]}_1
(\tau, \,\ z_1, \,\ -z_1+2k\tau, \,\ 0)}_{
\substack{|| \\[-2mm] {\displaystyle 
- \, i \, e^{-2\pi ikz_1}\frac{\eta(\tau)^3}{\vartheta_{11}(\tau, z_1)}
}}}
\nonumber
\\[3mm]
& &
- \,\ e^{ - \frac{\pi i}{2(m+\frac12)} z_3}
\sum_{k \in \zzz} (-1)^k \, 
q^{(m+\frac12)(k+\frac{1}{4(m+\frac12)})^2}
e^{2\pi im(k+\frac{1}{4(m+\frac12)})(z_1-z_3)}
\nonumber
\\[0mm]
& & \hspace{30mm}
\times \,\ 
\underbrace{
\Phi^{(-)[\frac12, \frac12]}_1
(\tau, \,\ z_3, \,\ -z_3+2k\tau, \,\ 0)}_{
\substack{|| \\[-2mm] {\displaystyle 
- \, i \, e^{-2\pi ikz_3}\frac{\eta(\tau)^3}{\vartheta_{11}(\tau, z_3)}
}}}
\label{n3:eqn:2022-823n}
\end{eqnarray}}
The RHS of this equation \eqref{n3:eqn:2022-823n} is rewritten as follows:
{\allowdisplaybreaks
\begin{eqnarray*}
& & \hspace{-10mm}
\text{RHS of \eqref{n3:eqn:2022-823n}} 
\\[1mm]
&=&
- \, i \, 
\frac{\eta(\tau)^3}{\vartheta_{11}(\tau, z_1)}
\sum_{k \in \zzz} (-1)^k \, 
q^{(m+\frac12)(k+\frac{1}{4(m+\frac12)})^2}
e^{-2\pi im(k+\frac{1}{4(m+\frac12)})(z_1-z_3)}
e^{-2\pi ikz_1} \, e^{-\frac{\pi i}{2(m+\frac12)} z_1}
\\[2mm]
& & 
+ \, i \, 
\frac{\eta(\tau)^3}{\vartheta_{11}(\tau, z_3)}
\sum_{k \in \zzz} (-1)^k \, 
q^{(m+\frac12)(k+\frac{1}{4(m+\frac12)})^2}
e^{2\pi im(k+\frac{1}{4(m+\frac12)})(z_1-z_3)}
e^{-2\pi ikz_3} \, e^{-\frac{\pi i}{2(m+\frac12)} z_3}
\\[2mm]
&=&
- \, i \,\ 
\frac{\eta(\tau)^3}{\vartheta_{11}(\tau, z_1)}
\underbrace{\sum_{k \in \zzz} (-1)^k \, 
q^{(m+\frac12)(k+\frac{1}{4(m+\frac12)})^2}
e^{2\pi i(m+\frac12)(k+\frac{1}{4(m+\frac12)}) \cdot 
\frac{-m(z_1-z_3)-z_1}{m+\frac12}} }_{
\substack{|| \\[0mm] {\displaystyle 
\theta_{\frac12, m+\frac12}^{(-)}\Big(\tau, \, 
\frac{-m(z_1-z_3)-z_1}{m+\frac12}\Big)
}}}
\\[2mm]
& &
+ i \,\ 
\frac{\eta(\tau)^3}{\vartheta_{11}(\tau, z_3)}
\underbrace{\sum_{k \in \zzz} (-1)^k \, 
q^{(m+\frac12)(k+\frac{1}{4(m+\frac12)})^2}
e^{2\pi i(m+\frac12)(k+\frac{1}{4(m+\frac12)}) \cdot 
\frac{m(z_1-z_3)-z_3}{m+\frac12}} }_{
\substack{|| \\[0mm] {\displaystyle 
\theta_{\frac12, m+\frac12}^{(-)}\Big(\tau, \, 
\frac{m(z_1-z_3)-z_3}{m+\frac12}\Big)
}}}
\end{eqnarray*}}
Then the equation \eqref{n3:eqn:2022-823n} becomes as follows:
{\allowdisplaybreaks
\begin{eqnarray*}
& & 
e^{\frac{\pi im}{m+\frac12}z_1}
\sum_{j \in \zzz}(-1)^j
q^{(m+\frac12)(j-\frac{1}{4(m+\frac12)})^2}
e^{2\pi im(j-\frac{1}{4(m+\frac12)})(z_1-z_3)}
\Phi^{[m,0]}(\tau, \,\ z_1, \,\ -z_3+2j\tau, \,\ 0)
\\[2mm]
&=&
-i \, \frac{\eta(\tau)^3}{\vartheta_{11}(\tau, z_1)}  
\theta_{-\frac12, m+\frac12}^{(-)}\Big(\tau,  
\frac{m(z_1-z_3)+z_1}{m+\frac12}\Big)
+ \, i \, 
\frac{\eta(\tau)^3}{\vartheta_{11}(\tau, z_3)} 
\theta_{\frac12, m+\frac12}^{(-)}\Big(\tau, 
\frac{m(z_1-z_3)-z_3}{m+\frac12}\Big)
\end{eqnarray*}}
Changing the notation $-z_3 \rightarrow z_2$, we obtain the formula 
\eqref{n3:eqn:2022-823m}, proving Lemma \ref{n3:lemma:2022-823c}.
\end{proof}

\vspace{1mm}

\begin{prop} 
\label{n3:prop:2022-823a}
For $m \in \frac12 \nnn$, the following formula holds:
{\allowdisplaybreaks
\begin{eqnarray}
& &
\theta_{\frac12, \, m+\frac12}^{(-)}
\Big(\tau, \, \frac{m(z_1-z_2)}{m+\frac12}\Big) \, 
\Phi^{[m,0]}(\tau, \, z_1, \, z_2, \, 0)
\nonumber
\\[2mm]
&=&
\Big[\sum_{\substack{j, \, k \, \in \zzz \\[1mm] 0 \, < \, k \, \leq \, 2mj}}
-
\sum_{\substack{j, \, k \, \in \zzz \\[1mm] 2mj \, < \, k \, \leq \, 0}} \Big] \, 
(-1)^j \, q^{(m+\frac12)(j+\frac{1}{4(m+\frac12)})^2}
e^{2\pi im(j+\frac{1}{4(m+\frac12)})(z_1-z_2)}
\nonumber
\\[1mm]
& & \hspace{30mm}
\times \,\ 
e^{-\pi ik(z_1-z_2)} q^{-\frac{k^2}{4m}} \, 
\big[\theta_{k,m}-\theta_{-k,m}\big](\tau, z_1+z_2)
\nonumber
\\[2mm]
& & \hspace{-7mm}
- \,\ i \, \eta(\tau)^3 \, \Bigg\{
\frac{\displaystyle 
\theta_{-\frac12, \, m+\frac12}^{(-)}\Big(\tau, \, 
z_1+z_2+\frac{z_1-z_2}{2m+1}\Big)}{
\vartheta_{11}(\tau, z_1)}
\, + \, 
\frac{\displaystyle 
\theta_{\frac12, \, m+\frac12}^{(-)}\Big(\tau, \, 
z_1+z_2+\frac{z_2-z_1}{2m+1}\Big)
}{\vartheta_{11}(\tau, z_2)} \Bigg\}
\label{n3:eqn:2022-823p}
\end{eqnarray}}
\end{prop}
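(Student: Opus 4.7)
Since the $-i\eta^3\{\cdots\}$ bracket on the RHS of the proposition coincides (after applying the standard reflection $\theta^{(-)}_{-n,\ell}(\tau,z) = \theta^{(-)}_{n,\ell}(\tau,-z)$ to one of its two summands, to reconcile the $\theta^{(-)}_{1/2}$ vs.\ $\theta^{(-)}_{-1/2}$ discrepancy in the second term) with the RHS of Lemma \ref{n3:lemma:2022-823c}, it suffices to establish the purely algebraic identity
\begin{equation*}
\theta^{(-)}_{1/2,\, m+\frac12}\!\Bigl(\tau, \tfrac{m(z_1-z_2)}{m+\frac12}\Bigr)\, \Phi^{[m,0]}(\tau, z_1, z_2, 0)
\,-\, (\text{LHS of Lemma \ref{n3:lemma:2022-823c}})
\,=\, (\text{double sum in the statement}),
\end{equation*}
which is a relation between two explicit double series. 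The eta-theta piece is then supplied for free by Lemma \ref{n3:lemma:2022-823c}.

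\paragraph{Main steps.}
Into the LHS of Lemma \ref{n3:lemma:2022-823c} I would substitute the Appell-type series for $\Phi^{[m,0]}(\tau, z_1, z_2+2j\tau, 0)$, which differs from $\Phi^{[m,0]}(\tau, z_1, z_2, 0)$ by multiplying the $r$-th summand by $q^{2mrj}$. This produces a double sum over $(j,r)\in\zzz^2$. I then complete the square in $j$, which rewrites the inner $j$-sum as a theta $\theta^{(-)}_{\cdot,\, m+\frac12}$ whose index depends linearly on $r$. Using the quasi-periodicity of $\theta^{(-)}_{\cdot,\, m+\frac12}$ modulo $2m+1$, I split $r = (2m+1)s + k$ with $k$ in a fundamental residue system of $\zzz/(2m+1)\zzz$. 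The $k=0$ contribution reassembles, via the very definition of $\Phi^{[m,0]}$ as an Appell sum over $s$, into the product $\theta^{(-)}_{1/2,\, m+\frac12}\!\bigl(\tau, \tfrac{m(z_1-z_2)}{m+\frac12}\bigr)\, \Phi^{[m,0]}(\tau, z_1, z_2, 0)$. The $k\neq 0$ contributions, after a second completion of the square (in $s$) and symmetrization under $r\mapsto -r$ (pairing $k$ with $-k$ and creating the antisymmetric combination $\theta_{k,m}-\theta_{-k,m}(\tau, z_1+z_2)$), must be shown to match the stated triangular double sum; the ranges $0 < k \leq 2mj$ (resp.\ $2mj < k \leq 0$) arise from the two sign regimes of the Appell-type pole $1/(1-e^{2\pi iz_1}q^r)$.

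\paragraph{Main obstacle.}
The principal technical difficulty is the index-bookkeeping in the last step: pinning down the exact cut-off $0 < k \leq 2mj$ (and its reflected counterpart $2mj < k \leq 0$), reconciling the signs coming from both the $(-1)^j$ in the original theta and the reflection $r\mapsto -r$, and verifying that the quadratic prefactor $q^{-k^2/(4m)}$ emerges precisely from the two successive completions of the square (first in $j$, then in $s$). Matching the remainder against the antisymmetric combination $[\theta_{k,m}-\theta_{-k,m}](\tau, z_1+z_2)$, with exactly the stated triangular range, is the only nontrivial computational content; everything else is mechanical expansion and reindexing.
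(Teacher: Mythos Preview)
Your outline diverges from the paper's route, and the step you flag as the ``main obstacle'' is in fact where the argument, as you describe it, does not go through.

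The paper never re-expands $\Phi^{[m,0]}$ into its Appell series. Instead it splits the $j$-sum on the LHS of Lemma~\ref{n3:lemma:2022-823c} into $j\ge 0$ and $j<0$, and to each term applies \emph{Lemma~2.5 of \cite{W2022b}}, the elliptic shift formula, which for $j\ge 0$ reads
\[
\Phi^{[m,0]}(\tau,z_1,z_2+2j\tau,0)
= e^{-4\pi imjz_1}\Bigl\{\Phi^{[m,0]}(\tau,z_1,z_2,0)
-\sum_{k=0}^{2mj-1} e^{\pi ik(z_1-z_2)}q^{-\frac{k^2}{4m}}
\bigl[\theta_{k,m}-\theta_{-k,m}\bigr](\tau,z_1+z_2)\Bigr\},
\]
with an analogous formula for $j<0$. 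Thus the antisymmetric combinations $[\theta_{k,m}-\theta_{-k,m}]$ and the triangular range $0\le k<2mj$ are \emph{input} from a known lemma, not something to be extracted by pole analysis. Summing the $\Phi$-piece over $j$ immediately produces $\theta^{(-)}_{-1/2,\,m+\frac12}\bigl(\tau,\tfrac{m(z_2-z_1)}{m+\frac12}\bigr)\cdot\Phi^{[m,0]}(\tau,z_1,z_2,0)$; the correction terms assemble into the stated double sum after the reindexing $(j,k)\mapsto(-j,-k)$.

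Your proposed regrouping via $r=(2m+1)s+k$ does not recover $\Phi^{[m,0]}$ as claimed. After summing over $j$ one is left with a series of the shape
$\sum_{r\in\zzz}\theta^{(-)}_{-\frac12+2mr,\,m+\frac12}(\cdots)\cdot
\dfrac{(\cdots)}{1-e^{2\pi iz_1}q^{r}}$,
and restricting to $r\equiv 0 \pmod{2m+1}$ gives an Appell sum with denominator $1-e^{2\pi iz_1}q^{(2m+1)s}$, which is \emph{not} $\Phi^{[m,0]}(\tau,z_1,z_2,0)$. A second issue is that $\Phi^{[m,0]}=\Phi^{[m,0]}_1-\Phi^{[m,0]}_2$, and only $\Phi^{[m,0]}_1$ transforms under $z_2\mapsto z_2+2j\tau$ by the simple numerator multiplication you describe; for $\Phi^{[m,0]}_2$ the shift moves the pole of the Appell kernel as well. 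The shift lemma from \cite{W2022b} handles both pieces simultaneously and is precisely what makes the triangular cut-offs $0<k\le 2mj$ and $2mj<k\le 0$ appear without any further work.
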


\begin{proof} 
We compute the LHS of the equation \eqref{n3:eqn:2022-823m}:
$$
\text{LHS of \eqref{n3:eqn:2022-823m}} \,\ = \,\ 
{\rm (I)}_+ \, + \, {\rm (I)}_-
$$
where
{\allowdisplaybreaks
\begin{eqnarray*}
& &
{\rm (I)}_+ := 
e^{\frac{\pi im}{m+\frac12}z_1} \sum_{j \in \zzz_{\geq 0}}
(-1)^j
q^{(m+\frac12)(j-\frac{1}{4(m+\frac12)})^2}
e^{2\pi im(j-\frac{1}{4(m+\frac12)})(z_1+z_2)}
\Phi^{[m,0]}(\tau, \, z_1, \, z_2+2j\tau, \, 0)
\\[1mm]
& &
{\rm (I)}_- := 
e^{\frac{\pi im}{m+\frac12}z_1} \sum_{j \in \zzz_{< 0}}
(-1)^j
q^{(m+\frac12)(j-\frac{1}{4(m+\frac12)})^2}
e^{2\pi im(j-\frac{1}{4(m+\frac12)})(z_1+z_2)}
\Phi^{[m,0]}(\tau, \, z_1, \, z_2+2j\tau, \, 0)
\end{eqnarray*}}
These functions ${\rm (I)}_{\pm}$ are computed by using Lemma 2.5 
in \cite{W2022b} as follows:
{\allowdisplaybreaks
\begin{eqnarray*}
& & \hspace{-10mm}
{\rm (I)}_+ \,\ = \,\ 
\sum_{j \in \zzz_{\geq 0}}(-1)^j
q^{(m+\frac12)(j-\frac{1}{4(m+\frac12)})^2}
e^{2\pi im(j-\frac{1}{4(m+\frac12)})(z_1+z_2)}
e^{\frac{\pi im}{m+\frac12}z_1} e^{-4\pi imjz_1}
\\[1mm]
& & 
\times \,\ \bigg\{
\Phi^{[m,0]}(\tau, \,\ z_1, \,\ z_2, \,\ 0)
\, - \, 
\sum_{k=0}^{2mj-1}
e^{\pi ik(z_1-z_2)} q^{-\frac{k^2}{4m}} \, 
\big[\theta_{k,m}-\theta_{-k,m}\big](\tau, z_1+z_2)\bigg\}
\\[2mm]
&=&
\sum_{j \in \zzz_{\geq 0}}(-1)^j
q^{(m+\frac12)(j-\frac{1}{4(m+\frac12)})^2}
e^{2\pi im(j-\frac{1}{4(m+\frac12)})(z_2-z_1)}
\Phi^{[m,0]}(\tau, \,\ z_1, \,\ z_2, \,\ 0)
\\[1mm]
& &- \,\ 
\sum_{j \in \zzz_{>0}} \, \sum_{k=0}^{2mj-1} \, (-1)^j
q^{(m+\frac12)(j-\frac{1}{4(m+\frac12)})^2}
e^{2\pi im(j-\frac{1}{4(m+\frac12)})(z_2-z_1)}
\\[-1mm]
& & \hspace{30mm}
\times \,\ 
e^{\pi ik(z_1-z_2)} q^{-\frac{k^2}{4m}} \, 
\big[\theta_{k,m}-\theta_{-k,m}\big](\tau, z_1+z_2)
\\[3mm]
& & \hspace{-10mm}
{\rm (I)}_- \,\ = \,\ 
\sum_{j \in \zzz_{<0}}(-1)^j
q^{(m+\frac12)(j-\frac{1}{4(m+\frac12)})^2}
e^{2\pi im(j-\frac{1}{4(m+\frac12)})(z_1+z_2)}
e^{\frac{\pi im}{m+\frac12}z_1} e^{-4\pi imjz_1}
\\[1mm]
& & 
\times \,\ \bigg\{
\Phi^{[m,0]}(\tau, \,\ z_1, \,\ z_2, \,\ 0)
\, + \, 
\sum_{\substack{k \in \zzz \\[1mm] 2mj \leq k <0}}
e^{\pi ik(z_1-z_2)} q^{-\frac{k^2}{4m}} \, 
\big[\theta_{k,m}-\theta_{-k,m}\big](\tau, z_1+z_2)\bigg\}
\\[2mm]
&=&
\sum_{j \in \zzz_{< 0}}(-1)^j
q^{(m+\frac12)(j-\frac{1}{4(m+\frac12)})^2}
e^{2\pi im(j-\frac{1}{4(m+\frac12)})(z_2-z_1)}
\Phi^{[m,0]}(\tau, \,\ z_1, \,\ z_2, \,\ 0)
\\[2mm]
& &+ \,\ 
\sum_{j < 0} \, 
\sum_{\substack{k \in \zzz \\[1mm] 2mj \, \leq \, k \, < \, 0}} \, 
(-1)^j \, q^{(m+\frac12)(j-\frac{1}{4(m+\frac12)})^2}
e^{2\pi im(j-\frac{1}{4(m+\frac12)})(z_2-z_1)}
\\[0mm]
& & \hspace{30mm}
\times \,\ 
e^{\pi ik(z_1-z_2)} q^{-\frac{k^2}{4m}} \, 
\big[\theta_{k,m}-\theta_{-k,m}\big](\tau, z_1+z_2)
\end{eqnarray*}}
Then we have 
\begin{subequations}
{\allowdisplaybreaks
\begin{eqnarray}
& & \hspace{-17mm}
{\rm (I)}_+ + {\rm (I)}_-  \, = \, 
\underbrace{\sum_{j\in \zzz}(-1)^j
q^{(m+\frac12)(j-\frac{1}{4(m+\frac12)})^2} 
e^{2\pi im(j-\frac{1}{4(m+\frac12)})(z_2-z_1)}
}_{\substack{|| \\[-1mm] {\displaystyle \hspace{3mm}
\theta_{-\frac12, \, m+\frac12}^{(-)}
\Big(\tau, \, \frac{m(z_2-z_1)}{m+\frac12}\Big)
}}}
\Phi^{[m,0]}(\tau, \,\ z_1, \,\ z_2, \,\ 0)
\nonumber
\\[2mm]
& &- \,\  
\Big[\sum_{\substack{j,k \in \zzz \\[1mm] 0 \leq \, k \, < \, 2mj}}
-
\sum_{\substack{j,k \in \zzz \\[1mm] 2mj \, \leq \, k \, < \, 0}}
\Big] \, 
(-1)^j \, q^{(m+\frac12)(j-\frac{1}{4(m+\frac12)})^2}
e^{2\pi im(j-\frac{1}{4(m+\frac12)})(z_2-z_1)}
\nonumber
\\[1mm]
& & \hspace{30mm}
\times \,\ 
e^{\pi ik(z_1-z_2)} q^{-\frac{k^2}{4m}} \, 
\big[\theta_{k,m}-\theta_{-k,m}\big](\tau, z_1+z_2)
\label{n3:eqn:2022-823r1}
\end{eqnarray}}
Putting $j=-j'$ and $k=-k'$, the RHS of this equation 
\eqref{n3:eqn:2022-823r1} is rewitten as follows:
{\allowdisplaybreaks
\begin{eqnarray}
& & \hspace{-10mm}
\text{RHS of \eqref{n3:eqn:2022-823r1}}
\, = \,\ 
\theta_{-\frac12, \, m+\frac12}^{(-)}
\Big(\tau, \, \frac{m(z_2-z_1)}{m+\frac12}\Big)
\Phi^{[m,0]}(\tau, \,\ z_1, \,\ z_2, \,\ 0)
\nonumber
\\[2mm]
&- & \Big[
\sum_{0 \, < \, k' \, \leqq \, 2mj'}
-
\sum_{2mj' \, < \, k' \, \leq \, 0} \Big] \, 
(-1)^{j'} \, q^{(m+\frac12)(j'+\frac{1}{4(m+\frac12)})^2}
e^{2\pi im(j'+\frac{1}{4(m+\frac12)})(z_1-z_2)}
\nonumber
\\[1mm]
& & \hspace{30mm}
\times \,\ 
e^{-\pi ik'(z_1-z_2)} q^{-\frac{k'{}^2}{4m}} \, 
\big[\theta_{k',m}-\theta_{-k',m}\big](\tau, z_1+z_2)
\label{n3:eqn:2022-823r2}
\end{eqnarray}}
\end{subequations}
Then, by \eqref{n3:eqn:2022-823m} and \eqref{n3:eqn:2022-823r2} 
we obtain the equation \eqref{n3:eqn:2022-823p}, proving 
Proposition \ref{n3:prop:2022-823a}.
\end{proof}

\subsection{$\Phi^{[m,\frac12]}$}
\label{subsec:explicit:Phi[m,1/2]}

\begin{lemma}
\label{n3:lemma:2022-824a}
For $m \in \frac12 \nnn$, the following formulas hold:
\begin{subequations}
\begin{enumerate}
\item[{\rm 1)}] $\sum\limits_{j \in \zzz} (-1)^j \, 
q^{(m+\frac12)j^2} \, 
e^{\pi ij(z_1-z_2)-2\pi ijm(z_1-z_3)} \, 
\Phi^{[m,\frac12]}_1(\tau, \, z_1, \, -z_3-2j\tau, \, 0)$
\begin{eqnarray}
&=&
\sum_{k \in \zzz} (-1)^k
q^{(m+\frac12)k^2} \, 
e^{-\pi ik(z_1-z_2)+2\pi ikm(z_1-z_3)} \, 
\Phi^{(-)[\frac12,\frac12]}_1(\tau, \, z_1, \, -z_2-2k\tau, \, 0)
\nonumber
\\[-2mm]
& &
\label{n3:eqn:2022-824d1}
\end{eqnarray}
\item[{\rm 2)}] $\sum\limits_{j \in \zzz} (-1)^j
q^{(m+\frac12)j^2} \, 
e^{\pi ij(z_1-z_2)-2\pi ijm(z_1-z_3)} \, 
\Phi^{[m,\frac12]}_2(\tau, \, z_1, \, -z_3-2j\tau, \, 0)$
\begin{eqnarray}
&=&
\sum_{k \in \zzz} (-1)^k
q^{(m+\frac12)k^2} 
e^{-\pi ik(z_1-z_2)-2\pi ikm(z_1-z_3)} 
\Phi^{(-)[\frac12,\frac12]}_1(\tau, \, z_3, \, z_1-z_2-z_3-2k\tau, \, 0)
\nonumber
\\[-2mm]
& &
\label{n3:eqn:2022-824d2}
\end{eqnarray}
\end{enumerate}
\end{subequations}
\end{lemma}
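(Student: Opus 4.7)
The plan is to mirror the proof of Lemma \ref{n3:lemma:2022-823b} step for step, with the $[m,0]$ auxiliary functions $F^{[m,0]}_i$ of \eqref{n3:eqn:2022-823e1}--\eqref{n3:eqn:2022-823e2} replaced by their $[m,\frac12]$ analogues corresponding to the Lie-theoretic characteristic shift $\frac14\alpha_2 \to \frac14(\alpha_2+\alpha_3)$ from \eqref{n3:eqn:2022-826a1}--\eqref{n3:eqn:2022-826a2}. Concretely, I take
\begin{align*}
F^{[m,\frac12]}_1 &:= e^{\pi iz_1}\sum_{j,k\in\zzz}(-1)^j\,
\frac{e^{\pi ij(z_1-z_2)+2\pi ikm(z_1-z_3)}\, q^{\frac12 j^2+mk^2+\frac12(j+k)}}{1-e^{2\pi iz_1}q^{j+k}}, \\
F^{[m,\frac12]}_2 &:= e^{\pi iz_3}\sum_{j,k\in\zzz}(-1)^j\,
\frac{e^{\pi ij(z_1-z_2)-2\pi ikm(z_1-z_3)}\, q^{\frac12 j^2+mk^2+\frac12(j+k)}}{1-e^{2\pi iz_3}q^{j+k}}.
\end{align*}
The extra linear piece $\frac12(j+k)$ in the $q$-exponent, together with the prefactor $e^{\pi iz_j}$ in front, is precisely what encodes the extra $\frac14\alpha_3$ of the theta characteristic at the coroot-lattice level.

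To prove 1), I evaluate $F^{[m,\frac12]}_1$ in two ways via the substitution $r:=j+k$, exactly in the manner of \eqref{n3:eqn:2022-823f1}--\eqref{n3:eqn:2022-823f2}. Eliminating $k=r-j$ and completing the square, the $q$-exponent $\frac12 j^2+m(r-j)^2+\frac12 r$ collapses to $(m+\frac12)j^2+mr^2-2mrj+\frac12 r$; the $j$-dependent factor pulls outside as $(-1)^j q^{(m+\frac12)j^2}e^{\pi ij(z_1-z_2)-2\pi ijm(z_1-z_3)}$, while the inner sum over $r$, absorbed with the global $e^{\pi iz_1}$, is recognised as $\Phi^{[m,\frac12]}_1(\tau,z_1,-z_3-2j\tau,0)$. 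Eliminating $j=r-k$ instead, the $q$-exponent becomes $(m+\frac12)k^2+\frac12 r(r+1)-rk$; the $k$-dependent factor pulls outside as $(-1)^k q^{(m+\frac12)k^2}e^{-\pi ik(z_1-z_2)+2\pi ikm(z_1-z_3)}$, while the inner sum together with $e^{\pi iz_1}$ becomes
$$e^{\pi iz_1}\sum_{r\in\zzz}(-1)^r\frac{e^{\pi ir(z_1-z_2-2k\tau)}\, q^{\frac12 r(r+1)}}{1-e^{2\pi iz_1}q^r}\;=\;\Phi^{(-)[\frac12,\frac12]}_1(\tau,z_1,-z_2-2k\tau,0).$$
Equating the two expressions for $F^{[m,\frac12]}_1$ then yields \eqref{n3:eqn:2022-824d1}.

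For 2) I carry out the analogous pair of reindexings on $F^{[m,\frac12]}_2$. The first reindexing produces the LHS of \eqref{n3:eqn:2022-824d2} after invoking Lemma 2.3 of \cite{W2022b} to swap $(z_1,z_3)$ inside the intermediate mock theta function, exactly as in the transition from \eqref{n3:eqn:2022-823g1} to $\Phi^{[m,0]}_2$. The second reindexing produces the RHS: since the denominator is now $1-e^{2\pi iz_3}q^r$, the inner sum is identified, by the analogue of \eqref{n3:eqn:2022-823g2}, with $\Phi^{(-)[\frac12,\frac12]}_1(\tau,z_3,z_1-z_2-z_3-2k\tau,0)$.

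The only delicate bookkeeping is to check that the extra $\frac12(j+k)$ in the $q$-exponent, combined with the prefactor $e^{\pi iz_j}$, simultaneously (a) cancels the $\frac12 j$-type linear correction that was present on the LHS in the $[m,0]$ formula \eqref{n3:eqn:2022-823h1} (so that the LHS $q$-exponent is the clean $(m+\frac12)j^2$) and (b) promotes $q^{\frac12 r^2}$ inside the inner sum to $q^{\frac12 r(r+1)}$, which is the exponent demanded by the odd theta characteristic underlying $\Phi^{(-)[\frac12,\frac12]}_1$. Once this completion-of-squares bookkeeping is verified, the remainder of the argument is line-for-line the proof of Lemma \ref{n3:lemma:2022-823b}.
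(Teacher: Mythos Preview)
Your proposal is correct and follows essentially the same approach as the paper's own proof. The only cosmetic difference is the choice of prefactor: the paper uses $e^{\frac{\pi i}{2}(2z_1-z_2-z_3)}$ and $e^{-\frac{\pi i}{2}(z_2-z_3)}$ in place of your $e^{\pi iz_1}$ and $e^{\pi iz_3}$, but these differ only by the $j,k$-independent constant $e^{-\frac{\pi i}{2}(z_2+z_3)}$, which cancels when the two evaluations of each $F^{[m,\frac12]}_i$ are equated.
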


\begin{proof} To prove this lemma, we consider the following two functions:
\begin{subequations}
{\allowdisplaybreaks
\begin{eqnarray}
F^{[m,\frac12]}_1 &:=& 
e^{\frac{\pi i}{2}(2z_1-z_2-z_3)}
\sum_{j, \, k \, \in \, \zzz} (-1)^{j} \, 
\dfrac{e^{\pi ij(z_1-z_2)+2\pi ikm(z_1-z_3)} \, 
q^{\frac12 j^2+mk^2+\frac12 j+\frac12 k}
}{1-e^{2\pi iz_1} \, q^{j+k}}
\label{n3:eqn:2022-824a1}
\\[1mm]
F^{[m,\frac12]}_2 &:=& 
e^{-\frac{\pi i}{2}(z_2-z_3)}
\sum\limits_{j, \, k \, \in \, \zzz} (-1)^{j} \, 
\dfrac{e^{\pi ij(z_1-z_2)-2\pi ikm(z_1-z_3)} \, 
q^{\frac12 j^2+mk^2+\frac12 j+\frac12 k}
}{1-e^{2\pi iz_3} \, q^{j+k}}
\label{n3:eqn:2022-824a2}
\end{eqnarray}}
\end{subequations}
We compute the RHS's of these functions by putting $j+k=r$ and 
replacing the sum $\sum_{j,k}$ with $\sum_{j,r}$ or $\sum_{k,r}$.

\medskip

\noindent
To prove 1), first we compute the RHS of 
\eqref{n3:eqn:2022-824a1} by replacing the sum $\sum_{j,k}$ with $\sum_{j,r}$ :
\begin{subequations}
{\allowdisplaybreaks
\begin{eqnarray}
& & \hspace{-16mm}
F^{[m,\frac12]}_1 \, = \, 
e^{\frac{\pi i}{2}(2z_1-z_2-z_3)}
\sum_{j, \, r \, \in \, \zzz} (-1)^j
\frac{e^{\pi ij(z_1-z_2)+2\pi im(r-j)(z_1-z_3)} \, 
q^{\frac12 j^2+ m(r-j)^2 + \frac12 r}}{1-e^{2\pi iz_1} \, q^r}
\nonumber
\\[3mm]
&=&
e^{-\frac{\pi i}{2}(z_2+z_3)}
\sum_{j \in \zzz} (-1)^j 
q^{(m+\frac12)j^2} 
e^{\pi ij(z_1-z_2)-2\pi ijm(z_1-z_3)} 
\nonumber
\\[-1mm]
& &\hspace{20mm}
\times \,\ 
\underbrace{\sum_{r \in \zzz} 
\frac{e^{2\pi imr(z_1-z_3-2j\tau)}
e^{\pi iz_1}
q^{mr^2+\frac12 r}}{1-e^{2\pi iz_1} \, q^r}}_{\substack{|| \\[-1mm] 
{\displaystyle \Phi^{[m,\frac12]}_1(\tau, \, z_1, \, -z_3-2j\tau, \, 0)
}}}
\label{n3:eqn:2022-824b1}
\end{eqnarray}}

Next we compute the RHS of \eqref{n3:eqn:2022-824a1} by 
replacing the sum $\sum_{j,k}$ with $\sum_{k,r}$ :
{\allowdisplaybreaks
\begin{eqnarray}
& & \hspace{-10mm}
F^{[m,\frac12]}_1 \, = \, 
e^{\frac{\pi i}{2}(2z_1-z_2-z_3)}
\sum_{k, \, r \, \in \, \zzz} (-1)^{k+r} \, 
\frac{e^{\pi i(r-k)(z_1-z_2)+2\pi ikm(z_1-z_3)} \, 
q^{\frac12 (r-k)^2+mk^2+\frac12 r}}{1-e^{2\pi iz_1} \, q^r}
\nonumber
\\[2mm]
&=&
e^{-\frac{\pi i}{2}(z_2+z_3)}
\sum_{k \in \zzz} (-1)^k
q^{(m+\frac12)k^2} \, 
e^{-\pi ik(z_1-z_2)+2\pi ikm(z_1-z_3)} 
\nonumber
\\[-1mm] 
& & \hspace{20mm}
\times \,\ 
\underbrace{\sum_{r \in \zzz} (-1)^r \, 
\frac{e^{\pi ir(z_1-z_2-2k\tau)} \, e^{\pi iz_1}
\, q^{\frac12 r^2+\frac12 r}
}{1-e^{2\pi iz_1} \, q^r} }_{\substack{|| \\[-1mm] {\displaystyle 
\Phi^{(-)[\frac12,\frac12]}_1(\tau, \, z_1, \, -z_2-2k\tau, \, 0)
}}}
\label{n3:eqn:2022-824b2}
\end{eqnarray}}
\end{subequations}
Then by \eqref{n3:eqn:2022-824b1} and \eqref{n3:eqn:2022-824b2} we have 
{\allowdisplaybreaks
\begin{eqnarray*}
& &
e^{-\frac{\pi i}{2}(z_2+z_3)}
\sum_{j \in \zzz} (-1)^j \, 
q^{(m+\frac12)j^2} \, 
e^{\pi ij(z_1-z_2)-2\pi ijm(z_1-z_3)} \, 
\Phi^{[m,\frac12]}_1(\tau, \, z_1, \, -z_3-2j\tau, \, 0)
\\[2mm]
&=&
e^{-\frac{\pi i}{2}(z_2+z_3)}
\sum_{k \in \zzz} (-1)^k
q^{(m+\frac12)k^2}  
e^{-\pi ik(z_1-z_2)+2\pi ikm(z_1-z_3)}  
\Phi^{(-)[\frac12,\frac12]}_1(\tau, \, z_1, \, -z_2-2k\tau, \, 0)
\end{eqnarray*}}
proving 1). 
\medskip

\noindent
To prove 2), first we compute the RHS of 
\eqref{n3:eqn:2022-824a2} by replacing the sum $\sum_{j,k}$ with $\sum_{j,r}$ :
\begin{subequations}
{\allowdisplaybreaks
\begin{eqnarray}
& & \hspace{-10mm}
F^{[m, \frac12]}_2 \, = \, 
e^{-\frac{\pi i}{2}(z_2-z_3)}
\sum_{j, \, r \, \in \, \zzz} (-1)^j \, 
\dfrac{e^{\pi ij(z_1-z_2)-2\pi im(r-j)(z_1-z_3)} \, 
q^{\frac12 j^2+ m(r-j)^2 + \frac12 r}}{1-e^{2\pi iz_3} \, q^r}
\nonumber
\\[2mm]
&=&
e^{-\frac{\pi i}{2}(z_2+z_3)}
\sum_{j \in \zzz} (-1)^j 
q^{(m+\frac12)j^2} \, 
e^{\pi ij(z_1-z_2)+2\pi ijm(z_1-z_3)} 
\nonumber
\\[2mm]
& & \hspace{20mm}
\times \,\ 
\underbrace{\sum_{r \in \zzz} \frac{
e^{2\pi imr(-z_1+z_3-2j\tau)} \, e^{\pi iz_3}
\, q^{mr^2+\frac12 r}}{1-e^{2\pi iz_3} \, q^r}}_{\substack{|| \\[-1.5mm] 
{\displaystyle \Phi^{[m,\frac12]}_1(\tau, \, z_3, \, -z_1-2j\tau, \, 0)
}}}
\nonumber
\\[3mm]
&=&
e^{-\frac{\pi i}{2}(z_2+z_3)}
\sum_{j \in \zzz} (-1)^j
q^{(m+\frac12)j^2} \, 
e^{\pi ij(z_1-z_2)-2\pi ijm(z_1-z_3)} \, 
\Phi^{[m,\frac12]}_2(\tau, \, z_1, \, -z_3-2j\tau, \, 0)
\nonumber
\\[-2mm]
& &
\label{n3:eqn:2022-824c1}
\end{eqnarray}}
where we used Lemma 2.3 in \cite{W2022b}.

\medskip

Next we compute the RHS of \eqref{n3:eqn:2022-824a2} by replacing 
the sum $\sum_{j,k}$ with $\sum_{k,r}$ :
{\allowdisplaybreaks
\begin{eqnarray}
& & \hspace{-15mm}
F^{[m, \frac12]}_2 \, = \, 
e^{-\frac{\pi i}{2}(z_2-z_3)}
\sum_{k, \, r \, \in \, \zzz} (-1)^{k+r}
\dfrac{e^{\pi i(r-k)(z_1-z_2)-2\pi ikm(z_1-z_3)} \, 
q^{\frac12(r-k)^2+mk^2+\frac12r}}{1-e^{2\pi iz_3} \, q^r}
\nonumber
\\[2mm]
&=&
e^{-\frac{\pi i}{2}(z_2+z_3)}
\sum_{k \in \zzz} (-1)^k
q^{(m+\frac12)k^2} \, 
e^{-\pi ik(z_1-z_2)-2\pi ikm(z_1-z_3)} 
\nonumber
\\[-2mm]
& & \hspace{20mm}
\times \,\ 
\underbrace{\sum_{r \in \zzz} (-1)^r \, \frac{
e^{\pi ir(z_1-z_2-2k\tau)}
\, e^{\pi iz_3}
\, q^{\frac12 r^2+\frac12 r}
}{1-e^{2\pi iz_3} \, q^r} }_{\substack{|| \\[-1mm] 
{\displaystyle 
\Phi^{(-)[\frac12,\frac12]}_1(\tau, \, z_3, \, z_1-z_2-z_3-2k\tau, \, 0)
}}}
\label{n3:eqn:2022-824c2}
\end{eqnarray}}
\end{subequations}
Then by \eqref{n3:eqn:2022-824c1} and \eqref{n3:eqn:2022-824c2} we have 
{\allowdisplaybreaks
\begin{eqnarray*}
& &
e^{-\frac{\pi i}{2}(z_2+z_3)}
\sum_{j \in \zzz} (-1)^j
q^{(m+\frac12)j^2} \, 
e^{\pi ij(z_1-z_2)-2\pi ijm(z_1-z_3)} \, 
\Phi^{[m,\frac12]}_2(\tau, \, z_1, \, -z_3-2j\tau, \, 0)
\\[2mm]
&=&
e^{-\frac{\pi i}{2}(z_2+z_3)}
\sum_{k \in \zzz} (-1)^k
q^{(m+\frac12)k^2} \, 
e^{-\pi ik(z_1-z_2)-2\pi ikm(z_1-z_3)} 
\\[0mm]
& & \hspace{20mm}
\times \,\ 
\Phi^{(-)[\frac12,\frac12]}_1(\tau, \, z_3, \, z_1-z_2-z_3-2k\tau, \, 0)
\end{eqnarray*}}
proving 2).
\end{proof}

\vspace{1mm}

\begin{lemma} \quad 
\label{n3:lemma:2022-824b}
For $m \in \frac12 \nnn$, the following formula holds:
{\allowdisplaybreaks
\begin{eqnarray}
& & 
\sum_{j \in \zzz}(-1)^j
q^{(m+\frac12)j^2}
e^{2\pi imj(z_1+z_2)}
\Phi^{[m,\frac12]}(\tau, \,\ z_1, \,\ z_2+2j\tau, \,\ 0)
\nonumber
\\[0mm]
&=&
- \, i \, \eta(\tau)^3 \bigg\{
\frac{ \displaystyle 
\theta_{0, \, m+\frac12}^{(-)}\Big(\tau, \, z_1+z_2+
\frac{z_1-z_2}{2m+1}\Big)}{\vartheta_{11}(\tau, z_1)}
+ 
\frac{ \displaystyle 
\theta_{0, \, m+\frac12}^{(-)}\Big(\tau, \, z_1+z_2+
\frac{z_2-z_1}{2m+1}\Big)}{\vartheta_{11}(\tau, z_2)}
\bigg\}
\label{n3:eqn:2022-824e}
\end{eqnarray}}
\end{lemma}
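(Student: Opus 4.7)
The plan parallels the proof of Lemma~\ref{n3:lemma:2022-823c}. First I subtract equation \eqref{n3:eqn:2022-824d2} from \eqref{n3:eqn:2022-824d1}; since $\Phi^{[m,\frac12]} = \Phi^{[m,\frac12]}_1 - \Phi^{[m,\frac12]}_2$, the LHS becomes
\begin{equation*}
\sum_{j\in\zzz} (-1)^j q^{(m+\frac12)j^2} e^{\pi i j(z_1-z_2)-2\pi i j m(z_1-z_3)} \Phi^{[m,\frac12]}(\tau, z_1, -z_3-2j\tau, 0),
\end{equation*}
while the RHS is the difference of the two $\Phi^{(-)[\frac12,\frac12]}_1$ sums from Lemma~\ref{n3:lemma:2022-824a}. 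A structural simplification compared with the $\Phi^{[m,0]}$ case is that the $q$-exponent here carries no linear term $\frac12 j$, so no completing-the-square is required; this is precisely why the final theta index will turn out to be $0$ rather than $\pm\frac12$.

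Next I specialize $z_1 = z_2$, which kills the $e^{\pm\pi i k(z_1-z_2)}$ prefactors and brings both $\Phi^{(-)[\frac12,\frac12]}_1$ arguments to the form $(\tau, z, -z-2k\tau, 0)$. Applying Kac-Peterson's identity (Lemma~\ref{n3:lemma:2022-823a}) with $a = -k$ evaluates each as $-i\,e^{2\pi i kz}\eta(\tau)^3/\vartheta_{11}(\tau,z)$, so the RHS collapses to
\begin{align*}
&-i\,\frac{\eta(\tau)^3}{\vartheta_{11}(\tau,z_1)} \sum_{k\in\zzz} (-1)^k q^{(m+\frac12)k^2} e^{2\pi i k[(m+1)z_1 - m z_3]} \\
&\qquad + i\,\frac{\eta(\tau)^3}{\vartheta_{11}(\tau,z_3)} \sum_{k\in\zzz} (-1)^k q^{(m+\frac12)k^2} e^{2\pi i k[-m z_1 + (m+1)z_3]}.
\end{align*}
The key algebraic identity $(m+1)z_1 - m z_3 = (m+\frac12)\bigl(z_1 - z_3 + \frac{z_1+z_3}{2m+1}\bigr)$, together with its symmetric counterpart for the other exponent, identifies each of the two sums as $\theta_{0,m+\frac12}^{(-)}(\tau,\,\cdot\,)$ at a specific argument.

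Finally I rename $-z_3 \mapsto z_2$. On the LHS the exponent becomes $-2\pi i j m(z_1+z_2)$, and the substitution $j \mapsto -j$ (legitimate because $(-1)^j q^{(m+\frac12)j^2}$ is even in $j$, thanks precisely to the absence of the linear term) converts it to $+2\pi i j m(z_1+z_2)$, matching \eqref{n3:eqn:2022-824e}. On the RHS, the relations $\vartheta_{11}(\tau,-z_2) = -\vartheta_{11}(\tau,z_2)$ and $\theta_{0,m+\frac12}^{(-)}(\tau,-z) = \theta_{0,m+\frac12}^{(-)}(\tau,z)$ (the latter immediate from $k \mapsto -k$ in the defining series) put the two terms in the stated symmetric form. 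The main obstacle is not conceptual but purely the bookkeeping — tracking signs and phase factors carefully through the Kac-Peterson substitution and the final variable change; no ideas beyond those used in Lemma~\ref{n3:lemma:2022-823c} are needed.
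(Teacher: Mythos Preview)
Your proposal is correct and follows essentially the same route as the paper: subtract \eqref{n3:eqn:2022-824d2} from \eqref{n3:eqn:2022-824d1}, set $z_1=z_2$, apply the Kac--Peterson identity (Lemma~\ref{n3:lemma:2022-823a}), recognize the resulting $k$-sums as $\theta_{0,m+\frac12}^{(-)}$, and rename $-z_3\to z_2$. The only cosmetic difference is that the paper performs the substitution $(j,k)\to(-j,-k)$ at the outset (yielding \eqref{n3:eqn:2022-824f}) and then uses Lemma~\ref{n3:lemma:2022-823a} with $a=k$, whereas you keep the original indices, use $a=-k$, and do $j\mapsto -j$ at the end; the two orderings are evidently equivalent.
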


\begin{proof} Letting $(j,k) \rightarrow (-j,-k)$ in 
\lq \lq ${\rm \eqref{n3:eqn:2022-824d1}}- 
{\rm \eqref{n3:eqn:2022-824d2}}$" , we have the following formula 
for $m \in \frac12 \nnn$: 
{\allowdisplaybreaks
\begin{eqnarray}
& & 
\sum_{j \in \zzz} (-1)^j q^{(m+\frac12)j^2}
e^{2\pi ij[-\frac12(z_1-z_2)+m(z_1-z_3)]}
\Phi^{[m,\frac12]}(\tau, \, z_1, \, -z_3+2j\tau, \, 0) 
\nonumber
\\[1mm]
&=& 
\sum_{k \in \zzz} (-1)^k q^{(m+\frac12)k^2} 
e^{2\pi ik[\frac12(z_1-z_2)-m(z_1-z_3)]}
\Phi_1^{(-)[\frac12,\frac12]}
(\tau, \, z_1, \, -z_2+2k\tau, \, 0)
\nonumber
\\[1mm]
& & \hspace{-5mm}
- \, 
\sum_{k \in \zzz} (-1)^k q^{(m+\frac12)k^2} 
e^{2\pi ik[\frac12(z_1-z_2)+m(z_1-z_3)]}
\Phi_1^{(-)[\frac12,\frac12]}
(\tau, \, z_3, \, z_1-z_2-z_3+2k\tau, \, 0)
\nonumber
\\[-2mm]
& &
\label{n3:eqn:2022-824f}
\end{eqnarray}}
Letting $z_1=z_2$ in this formula \eqref{n3:eqn:2022-824f}
and using Lemma \ref{n3:lemma:2022-823a}, we have 
{\allowdisplaybreaks
\begin{eqnarray*}
& &
\sum_{j \in \zzz}(-1)^j
q^{(m+\frac12)j^2}
e^{2\pi imj(z_1-z_3)}
\Phi^{[m,\frac12]}(\tau, \, z_1, \, -z_3+2j\tau, \, 0)
\nonumber
\\[0mm]
&=&
\sum_{k \in \zzz} (-1)^k \, 
q^{(m+\frac12)k^2} e^{-2\pi imk(z_1-z_3)}
\underbrace{\Phi^{(-)[\frac12, \frac12]}_1
(\tau, \,\ z_1, \,\ -z_1+2k\tau, \,\ 0)}_{
\substack{|| \\[-2mm] {\displaystyle 
-i \, e^{-2\pi ikz_1}\frac{\eta(\tau)^3}{\vartheta_{11}(\tau, z_1)}
}}}
\nonumber
\\[1mm]
& &
- \,\ \sum_{k \in \zzz} (-1)^k \, 
q^{(m+\frac12)k^2}
e^{2\pi imk(z_1-z_3)}
\underbrace{\Phi^{(-)[\frac12, \frac12]}_1
(\tau, \,\ z_3, \,\ -z_3+2k\tau, \,\ 0)}_{
\substack{|| \\[-2mm] {\displaystyle 
-i \, e^{-2\pi ikz_3}\frac{\eta(\tau)^3}{\vartheta_{11}(\tau, z_3)}
}}}
\\[2mm]
&=& - \, i \, 
\frac{\eta(\tau)^3}{\vartheta_{11}(\tau, z_1)}
\underbrace{\sum_{k \in \zzz} (-1)^k \, 
q^{(m+\frac12)k^2}e^{2\pi i(m+\frac12)k \cdot 
\frac{-m(z_1-z_3)-z_1}{m+\frac12}} }_{
\substack{|| \\[0mm] {\displaystyle 
\theta_{0, m+\frac12}^{(-)}\Big(\tau, \, 
\frac{-m(z_1-z_3)-z_1}{m+\frac12}\Big)
}}}
\\[1mm]
& &+ \, i \, 
\frac{\eta(\tau)^3}{\vartheta_{11}(\tau, z_3)}
\underbrace{\sum_{k \in \zzz} (-1)^k \, 
q^{(m+\frac12)k^2}
e^{2\pi i(m+\frac12)k \cdot 
\frac{m(z_1-z_3)-z_3}{m+\frac12}} }_{\substack{|| \\[0mm] {\displaystyle 
\theta_{0, m+\frac12}^{(-)}\Big(\tau, \, 
\frac{m(z_1-z_3)-z_3}{m+\frac12}\Big)
}}}
\end{eqnarray*}}
Thus we have 
{\allowdisplaybreaks
\begin{eqnarray*}
& & 
\sum_{j \in \zzz}(-1)^j
q^{(m+\frac12)j^2}
e^{2\pi imj(z_1-z_3)}
\Phi^{[m,\frac12]}(\tau, \,\ z_1, \,\ -z_3+2j\tau, \,\ 0)
\\[0mm]
&=&
- \, i \, \frac{\eta(\tau)^3}{\vartheta_{11}(\tau, z_1)} \, 
\theta_{0, \, m+\frac12}^{(-)}\Big(\tau, \, 
\frac{m(z_1-z_3)+z_1}{m+\frac12}\Big)
\, + \, i \, 
\frac{\eta(\tau)^3}{\vartheta_{11}(\tau, z_3)} \, 
\theta_{0, \, m+\frac12}^{(-)}\Big(\tau, \, 
\frac{m(z_1-z_3)-z_3}{m+\frac12}\Big)
\end{eqnarray*}}
Changing the notation $-z_3 \rightarrow z_2$, we obtain the formula 
\eqref{n3:eqn:2022-824e}, proving Lemma \ref{n3:lemma:2022-824b}.
\end{proof}

\vspace{1mm}

\begin{prop}
\label{n3:prop:2022-824a}
For $m \in \frac12 \nnn$, the following formula holds:
{\allowdisplaybreaks
\begin{eqnarray}
& & \hspace{-8mm}
\theta_{0, \, m+\frac12}^{(-)}
\Big(\tau, \, \frac{m(z_1-z_2)}{m+\frac12}\Big) \, 
\Phi^{[m,\frac12]}(\tau, \, z_1, \, z_2, \, 0)
\nonumber
\\[2mm]
&=&
\bigg[
\sum_{\substack{(j, k) \, \in \, \zzz \times \frac12 \zzz_{\rm odd} \\[1mm]
0 \, < \, k \, < \, 2mj
}}
-
\sum_{\substack{(j, k) \, \in \, \zzz \times \frac12 \zzz_{\rm odd} \\[1mm]
2mj \, < \, k \, < \, 0
}}
\bigg] \, 
(-1)^j \, q^{(m+\frac12)j^2}
e^{2\pi imj(z_2-z_1)}
\nonumber
\\[1mm]
& & \hspace{30mm}
\times \,\ 
e^{\pi ik(z_1-z_2)} q^{-\frac{k^2}{4m}} \, 
\big[\theta_{k,m}-\theta_{-k,m}\big](\tau, z_1+z_2) \hspace{30mm}
\nonumber
\\[3mm]
& & \hspace{-7mm}
- \,\ i \, \eta(\tau)^3 \, \Bigg\{
\frac{\displaystyle 
\theta_{0, \, m+\frac12}^{(-)}\Big(\tau, \, 
z_1+z_2+\frac{z_1-z_2}{2m+1}\Big)}{
\vartheta_{11}(\tau, z_1)}
\, + \, 
\frac{\displaystyle 
\theta_{0, \, m+\frac12}^{(-)}\Big(\tau, \, 
z_1+z_2+\frac{z_2-z_1}{2m+1}\Big)
}{\vartheta_{11}(\tau, z_2)} \Bigg\}
\label{n3:eqn:2022-824g}
\end{eqnarray}}
\end{prop}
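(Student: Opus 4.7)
The proof will follow exactly the template used for Proposition \ref{n3:prop:2022-823a}, using Lemma \ref{n3:lemma:2022-824b} as the analogue of Lemma \ref{n3:lemma:2022-823c}. The plan is to take the identity \eqref{n3:eqn:2022-824e}, expand its left-hand side into a sum containing $\Phi^{[m,\frac12]}(\tau,z_1,z_2,0)$ plus boundary correction terms, and match the resulting expression with \eqref{n3:eqn:2022-824e}.

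First, I would split the $j$-summation in the left-hand side of \eqref{n3:eqn:2022-824e} into the two pieces
\[
{\rm (I)}_+ := \sum_{j\in\zzz_{\geq 0}}(-1)^j q^{(m+\frac12)j^2} e^{2\pi imj(z_1+z_2)} \Phi^{[m,\frac12]}(\tau,z_1,z_2+2j\tau,0),
\]
\[
{\rm (I)}_- := \sum_{j\in\zzz_{<0}}(-1)^j q^{(m+\frac12)j^2} e^{2\pi imj(z_1+z_2)} \Phi^{[m,\frac12]}(\tau,z_1,z_2+2j\tau,0),
\]
and to each apply the quasi-periodicity/shift formula for $\Phi^{[m,\frac12]}$ (the $s=\frac12$ analogue of Lemma 2.5 in \cite{W2022b}) to express $\Phi^{[m,\frac12]}(\tau,z_1,z_2+2j\tau,0)$ as $\Phi^{[m,\frac12]}(\tau,z_1,z_2,0)$ plus a finite boundary sum. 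For $s=\frac12$, the boundary sum runs over half-integer indices $k\in\frac12\zzz_{\rm odd}$ (instead of $k\in\zzz$ as in the $s=0$ case), and the summand carries $e^{\pi ik(z_1-z_2)}q^{-k^2/4m}[\theta_{k,m}-\theta_{-k,m}](\tau,z_1+z_2)$; this is the exact mechanism that produces the index set appearing in the first pair of sums in \eqref{n3:eqn:2022-824g}.

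Next I would collect the $\Phi^{[m,\frac12]}(\tau,z_1,z_2,0)$ contributions from ${\rm (I)}_\pm$: together they yield
\[
\biggl(\sum_{j\in\zzz}(-1)^j q^{(m+\frac12)j^2} e^{2\pi imj(z_2-z_1)}\biggr)\,\Phi^{[m,\frac12]}(\tau,z_1,z_2,0),
\]
and the parenthesized sum is exactly $\theta_{0,\,m+\frac12}^{(-)}(\tau,m(z_2-z_1)/(m+\frac12))=\theta_{0,\,m+\frac12}^{(-)}(\tau,m(z_1-z_2)/(m+\frac12))$ by the symmetry $\theta_{0,m+\frac12}^{(-)}(\tau,-w)=\theta_{0,m+\frac12}^{(-)}(\tau,w)$. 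After reindexing $(j,k)\to(-j',-k')$ in the boundary contributions (as in the derivation of \eqref{n3:eqn:2022-823r2}), the two regions $0\leq k<2mj$ and $2mj\leq k<0$ get converted to the regions $0<k\leq 2mj$ and $2mj<k\leq 0$ appearing in \eqref{n3:eqn:2022-824g}, and the overall signs and phases match. Setting the result equal to the right-hand side of \eqref{n3:eqn:2022-824e} then gives \eqref{n3:eqn:2022-824g}.

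The main obstacle is the bookkeeping in step two: making sure that the $[m,\frac12]$-version of the shift formula is applied with the correct set of half-integer indices and the right signs, so that the boundary pair $(\theta_{k,m}-\theta_{-k,m})$ drops out cleanly and the two sums over $(j,k)\in\zzz\times\frac12\zzz_{\rm odd}$ appear with the asymmetry $0<k<2mj$ versus $2mj<k<0$ (strict inequalities, reflecting that $k$ is never an integer here). Everything else is parallel to the proof of Proposition \ref{n3:prop:2022-823a}, with the $q^{\frac12j}$ factor and the shifted indices $j\pm\tfrac{1}{4(m+\frac12)}$ replaced by their $s=\frac12$ counterparts (no linear $q^{\frac12j}$, plain integer lattice shift), which is precisely why the $\theta_{-\frac12,m+\frac12}^{(-)}$ in \eqref{n3:eqn:2022-823p} is replaced by $\theta_{0,m+\frac12}^{(-)}$ in \eqref{n3:eqn:2022-824g}.
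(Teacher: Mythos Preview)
Your proposal is correct and follows essentially the same approach as the paper: split the left-hand side of \eqref{n3:eqn:2022-824e} into ${\rm (I)}_\pm$, apply the $s=\tfrac12$ case of Lemma~2.5 in \cite{W2022b}, and collect terms. One small simplification: in the $s=\tfrac12$ case the half-integer index $k$ never hits the endpoints $0$ or $2mj$, so the strict inequalities $0<k<2mj$ and $2mj<k<0$ arise directly from rewriting the integer sum $\sum_{k=0}^{2mj-1}$ with the shift $k\mapsto k+\tfrac12$, and the $(j,k)\to(-j',-k')$ reindexing step from the proof of Proposition~\ref{n3:prop:2022-823a} is not actually needed here.
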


\begin{proof} We compute the LHS of the equation \eqref{n3:eqn:2022-824e}:
$$
\text{LHS of \eqref{n3:eqn:2022-824e}} \,\ = \,\ 
{\rm (I)}_+ \,\ + \,\ {\rm (I)}_-
$$
where
{\allowdisplaybreaks
\begin{eqnarray*}
{\rm (I)}_+ &:=&
\sum_{j \in \zzz_{\geq 0}} (-1)^j
q^{(m+\frac12)j^2} e^{2\pi imj(z_1+z_2)} \, 
\Phi^{[m,\frac12]}(\tau, \, z_1, \, z_2+2j\tau, \, 0)
\\[1mm]
{\rm (I)}_- &:=&
\sum_{j \in \zzz_{< 0}} (-1)^j
q^{(m+\frac12)j^2} e^{2\pi imj(z_1+z_2)} \, 
\Phi^{[m,\frac12]}(\tau, \, z_1, \, z_2+2j\tau, \, 0)
\end{eqnarray*}}
These functions ${\rm (I)}_{\pm}$ are computed by using Lemma 2.5 
in \cite{W2022b} as follows:
{\allowdisplaybreaks
\begin{eqnarray*}
& & \hspace{-10mm}
{\rm (I)}_+ \,\ = \,\ 
\sum_{j \in \zzz_{\geq 0}}(-1)^j
q^{(m+\frac12)j^2}
e^{2\pi im(j-\frac{1}{4(m+\frac12)})(z_1+z_2)}e^{-4\pi imjz_1}
\\[0mm]
& & \hspace{-5mm}
\times \bigg\{
\Phi^{[m,\frac12]}(\tau, z_1, z_2, 0)
- \hspace{-2mm}
\sum_{k=0}^{2mj-1} \hspace{-2mm}
e^{\pi i(k+\frac12)(z_1-z_2)} q^{-\frac{(k+\frac12)^2}{4m}} \, 
\big[\theta_{k+\frac12,m}-\theta_{-(k+\frac12),m}\big](\tau, z_1+z_2)
\bigg\}
\\[1mm]
&=&
\sum_{j \in \zzz_{\geq 0}}(-1)^j
q^{(m+\frac12)j^2}
e^{2\pi imj(z_2-z_1)}
\Phi^{[m,\frac12]}(\tau,  z_1, z_2,  0)
\\[1mm]
& & \hspace{-5mm}
- \,\sum_{j \in \zzz_{>0}} \sum_{k=0}^{2mj-1} \, (-1)^j
q^{(m+\frac12)j^2}
e^{2\pi imj(z_2-z_1)}
e^{\pi i(k+\frac12)(z_1-z_2)} q^{-\frac{(k+\frac12)^2}{4m}} \, 
\\[-2mm]
& & \hspace{30mm}
\times \,\ 
\big[\theta_{k+\frac12,m}-\theta_{-(k+\frac12),m}\big](\tau, z_1+z_2)
\\[3mm]
& & \hspace{-10mm}
{\rm (I)}_- \,\ = \,\ 
\sum_{j \in \zzz_{< 0}}(-1)^j
q^{(m+\frac12)(j-\frac{1}{4(m+\frac12)})^2}
e^{2\pi imj(z_1+z_2)}e^{-4\pi imjz_1}
\\[1mm]
& & \hspace{-5mm}
\times \bigg\{
\Phi^{[m,\frac12]}(\tau, z_1, z_2, 0)
+
\sum_{\substack{k \in \zzz \\[1mm] 2mj \leq k <0}}
e^{\pi i(k+\frac12)(z_1-z_2)} q^{-\frac{(k+\frac12)^2}{4m}} \, 
\big[\theta_{k+\frac12,m}-\theta_{-(k+\frac12),m}\big](\tau, z_1+z_2)
\bigg\}
\\[3mm]
&=&
\sum_{j \in \zzz_{< 0}}(-1)^j
q^{(m+\frac12)j^2}
e^{2\pi imj(z_2-z_1)}
\Phi^{[m,\frac12]}(\tau, z_1, z_2, 0)
\\[0mm]
& & \hspace{-5mm}
+ \,\ \sum_{j \in \zzz_{< 0}} 
\sum_{\substack{k \in \zzz \\[1mm] 2mj \, \leq \, k \, < \, 0}} 
(-1)^j \, q^{(m+\frac12)j^2}
e^{2\pi imj(z_2-z_1)}
e^{\pi i(k+\frac12)(z_1-z_2)} q^{-\frac{(k+\frac12)^2}{4m}}
\\[-4mm]
& & \hspace{40mm}
\times \,\ 
\big[\theta_{k+\frac12,m}-\theta_{-(k+\frac12),m}\big](\tau, z_1+z_2)
\end{eqnarray*}}
Then we have 
{\allowdisplaybreaks
\begin{eqnarray}
& & \hspace{-10mm}
{\rm (I)}_+ + {\rm (I)}_- 
\,\ = \,\ 
\underbrace{\sum_{j\in \zzz}(-1)^j
q^{(m+\frac12)j^2} 
e^{2\pi i(m+\frac12)j \cdot \frac{m(z_2-z_1)}{m+\frac12}}
}_{\substack{|| \\[-1.5mm] {\displaystyle \hspace{5mm}
\theta_{0, \, m+\frac12}^{(-)}\Big(\tau, \, \frac{m(z_2-z_1)}{m+\frac12}\Big)
}}}
\Phi^{[m,\frac12]}(\tau, z_1, z_2, 0)
\nonumber
\\[1mm]
& &- \,\, 
\Big[\sum_{j \in \zzz_{> 0}} \, 
\sum_{\substack{k \in \zzz \\[1mm] 0 \leq \, k \, < \, 2mj}}
-
\sum_{j \in \zzz_{< 0}} \, 
\sum_{\substack{k \in \zzz \\[1mm] 2mj \, \leq \, k \, < \, 0}} \Big] \, 
(-1)^j \, q^{(m+\frac12)j^2}
e^{2\pi imj(z_2-z_1)}
\nonumber
\\[0mm]
& & \hspace{30mm}
\times \,\ 
e^{\pi i(k+\frac12)(z_1-z_2)} q^{-\frac{(k+\frac12)^2}{4m}} \, 
\big[\theta_{k+\frac12,m}-\theta_{-(k+\frac12),m}\big](\tau, z_1+z_2)
\nonumber
\\[1mm]
&=&
\theta_{0, \, m+\frac12}^{(-)}
\Big(\tau, \, \frac{m(z_1-z_2)}{m+\frac12}\Big) \, 
\Phi^{[m,\frac12]}(\tau, z_1, z_2, 0)
\nonumber
\\[2mm]
& & \hspace{-5mm}
- \, \bigg[
\sum_{\substack{(j, k) \, \in \, \zzz \times \frac12 \zzz_{\rm odd} \\[1mm]
0 \, < \, k \, < \, 2mj
}}
-
\sum_{\substack{(j, k) \, \in \, \zzz \times \frac12 \zzz_{\rm odd} \\[1mm]
2mj \, < \, k \, < \, 0
}}
\bigg] \, 
(-1)^j \, q^{(m+\frac12)j^2}
e^{2\pi imj(z_2-z_1)}
\nonumber
\\[1mm]
& & \hspace{30mm}
\times \,\ 
e^{\pi ik(z_1-z_2)} q^{-\frac{k^2}{4m}} \, 
\big[\theta_{k,m}-\theta_{-k,m}\big](\tau, z_1+z_2)
\label{n3:eqn:2022-824h}
\end{eqnarray}}
Then by \eqref{n3:eqn:2022-824e} and \eqref{n3:eqn:2022-824h}
we obtain \eqref{n3:eqn:2022-824g}, proving Proposition
\ref{n3:prop:2022-824a}.
\end{proof}

\section{Formula for $\Phi^{[\frac{m}{2},s]}(2\tau, 
z+\frac{\tau}{2}-\frac12, \, 
z-\frac{\tau}{2}+\frac12, \, \frac{\tau}{8})$}
\label{sec:formula:Phi}

\medskip

\begin{lemma} \quad 
\label{n3:lemma:2022-825a}
For $m \in \nnn$ and $p \in \zzz$, the following formulas hold:
\begin{subequations}
\begin{enumerate}
\item[{\rm 1)}] \,\ $q^{-\frac{m}{16}(2p+1)^2} \, 
\theta_{1+m(p+\frac12), m+1}^{(\sigma(m))}(\tau,0) \, 
\Phi^{[\frac{m}{2},0]}\Big(2\tau, \,\ 
z+\dfrac{\tau}{2}-\dfrac12+p\tau, \,\ 
z-\dfrac{\tau}{2}+\dfrac12-p\tau, \,\ 0\Big)$
{\allowdisplaybreaks
\begin{eqnarray}
&=& 
\bigg[
\sum_{\substack{j, \, k \, \in \zzz \\[1mm] 0 \, < \, k \, \leq \, mj}}
-
\sum_{\substack{j, \, k \, \in \zzz \\[1mm] mj \, < \, k \, \leq \, 0}} 
\bigg] (-1)^{(m+1)j+k} \, 
q^{(m+1)(j+\frac{1}{2(m+1)}+\frac{m(2p+1)}{4(m+1)})^2
\, - \, \frac{1}{m}(k+\frac{m(2p+1)}{4})^2} 
\nonumber
\\[-2mm]
& & \hspace{60mm}
\times \,\ 
\big[\theta_{2k,m}-\theta_{-2k,m}\big](\tau, z)
\nonumber
\\[1mm]
& & 
+ \,\ \eta(2\tau)^3 \, \bigg\{
\frac{\theta_{p-\frac12, m+1}(\tau,z)}{\theta_{p-\frac12, 1}(\tau,z)}
\, - \, 
\frac{\theta_{-p+\frac12, m+1}(\tau,z)}{\theta_{-p+\frac12, 1}(\tau,z)}
\bigg\}
\label{n3:eqn:2022-825a}
\end{eqnarray}}
\item[{\rm 2)}] \,\ $q^{-\frac{m}{16}(2p+1)^2} \, 
\theta_{m(p+\frac12), m+1}^{(\sigma(m))}(\tau,0) \, 
\Phi^{[\frac{m}{2},\frac12]}\Big(2\tau, \,\ 
z+\dfrac{\tau}{2}-\dfrac12+p\tau, \,\ 
z-\dfrac{\tau}{2}+\dfrac12-p\tau, \,\ 0\Big)$
{\allowdisplaybreaks
\begin{eqnarray}
&=& 
\bigg[
\sum_{\substack{(j, k) \, \in \, \zzz \times \frac12 \zzz_{\rm odd} \\[1mm]
0 \, < \, k \, < \, mj}}
-
\sum_{\substack{(j, k) \, \in \, \zzz \times \frac12 \zzz_{\rm odd} \\[1mm]
mj \, < \, k \, < \, 0}}
\bigg] \, 
(-1)^{(m+1)j+k} \, q^{(m+1)(j+\frac{m(2p+1)}{4(m+1)})^2
\, - \, \frac{1}{m}(k+\frac{m(2p+1)}{4})^2} 
\nonumber
\\[-6mm]
& & \hspace{65mm}
\times \,\ \big[\theta_{2k,m}-\theta_{-2k,m}\big](\tau, z)
\nonumber
\\[2mm]
& & 
- \,\ i \,\ \eta(2\tau)^3 \, \bigg\{
\frac{\theta_{p+\frac12, m+1}(\tau,z)}{\theta_{p-\frac12, 1}(\tau,z)}
\, - \, 
\frac{\theta_{-p-\frac12, m+1}(\tau,z)}{\theta_{-p+\frac12, 1}(\tau,z)}
\bigg\}
\label{n3:eqn:2022-827a}
\end{eqnarray}}
\end{enumerate}
\end{subequations}
where $\sigma(m)$ is defined by \eqref{n3:eqn:2022-825b}.
\end{lemma}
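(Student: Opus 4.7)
\medskip

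\noindent\textbf{Proof strategy.} The plan is to specialize Propositions \ref{n3:prop:2022-823a} and \ref{n3:prop:2022-824a} (for parts 1 and 2 respectively) with $m$ replaced by $m/2$, at the point $\tau \mapsto 2\tau$, $z_1 = z+\frac{\tau}{2}-\frac12+p\tau$, $z_2 = z-\frac{\tau}{2}+\frac12-p\tau$, and then simplify every theta-type expression appearing in the statement by means of Notes \ref{n3:note:2022-825a}, \ref{n3:note:2022-825b}, \ref{n3:note:2022-825c}. The key observation is that with this substitution one has
\[
z_1+z_2 \,=\, 2z, \qquad z_1-z_2 \,=\, (2p+1)\tau-1,
\]
so that all the arguments $\frac{m(z_1-z_2)}{m+1/2}$ and $z_1+z_2\pm \frac{z_1-z_2}{2m+1}$ appearing in Propositions \ref{n3:prop:2022-823a} and \ref{n3:prop:2022-824a} become precisely the shifts handled by Notes \ref{n3:note:2022-825a}--\ref{n3:note:2022-825c}. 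A recurring auxiliary identity is the level-doubling relation $\theta_{n,k}(2\tau, 2y) = \theta_{2n, 2k}(\tau, y)$ (immediate from the defining series), which converts $\theta_{\pm\frac12, (m+1)/2}^{(-)}(2\tau,\cdot)$ into $\theta_{\pm 1, m+1}^{(-)}(\tau,\cdot)$ and $\theta_{k, m/2}(2\tau, 2z)$ into $\theta_{2k, m}(\tau, z)$ as needed.

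\medskip

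\noindent\textbf{Step-by-step.} First, I would apply the level-doubling relation to the LHS coefficient $\theta_{1/2,(m+1)/2}^{(-)}(2\tau,\cdot)$ (resp.\ $\theta_{0,(m+1)/2}^{(-)}(2\tau,\cdot)$) and then invoke Note \ref{n3:note:2022-825c}, which produces the claimed coefficient $q^{-\frac{m^2}{16(m+1)}(2p+1)^2}\,\theta^{(\sigma(m))}_{\ast,m+1}(\tau,0)$ multiplied by a residual exponential $e^{-\pi i m/(2(m+1))}$. Comparing with the desired LHS coefficient $q^{-\frac{m}{16}(2p+1)^2}\theta^{(\sigma(m))}_{\ast,m+1}(\tau,0)$, the difference is the scalar
\[
C \,:=\, e^{\pi i m/(2(m+1))}\,q^{-m(2p+1)^2/(16(m+1))},
\]
which must then multiply every term on the RHS after the substitution. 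Second, for the lattice sum on the RHS I would substitute $e^{2\pi im(j+\frac{1}{4(m+1/2)})(z_1-z_2)}$ and $e^{-\pi i k(z_1-z_2)}$, absorb the phase factors $(-1)^{mj}$ and $(-1)^k$ into the sign, use level-doubling on $\theta_{k,m/2}(2\tau,2z)=\theta_{2k,m}(\tau,z)$, and complete the squares in the exponent. A routine check shows the leftover $q$-constant equals $q^{m(2p+1)^2/(16(m+1))}$, which combines with $C$ to give exactly $q^0$, yielding the lattice-sum part of the lemma with the correct $(-1)^{(m+1)j+k}$ and the summation ranges $0<k\leq mj$ etc. (these become $0<k<mj$ in part 2 because of the half-integer index set $\frac12\zzz_{\rm odd}$).

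\medskip

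\noindent\textbf{Non-lattice terms.} For the two terms of the form $\theta^{(-)}_{\pm 1/2,(m+1)/2}(2\tau,\cdot)/\vartheta_{11}(2\tau,z_i)$, I would convert the denominators $\vartheta_{11}(2\tau, z_1)$ and $\vartheta_{11}(2\tau, z_2)$ by Note \ref{n3:note:2022-825a} into $\theta_{p-1/2,1}(\tau,z)$ and $-\theta_{-p+1/2,1}(\tau,z)$ (up to explicit exponentials). For the numerators, use level-doubling to descend to $\theta_{-1,m+1}^{(-)}(\tau,y_1)$ and $\theta_{1,m+1}^{(-)}(\tau,y_2)$; the first is treated by Note \ref{n3:note:2022-825b} 1(i), and the second by the symmetry $\theta_{1,m+1}^{(-)}(\tau,y)=\theta_{-1,m+1}^{(-)}(\tau,-y)$ combined with Note \ref{n3:note:2022-825b} 1(i) applied at $z\to -z$ (equivalently, part 1(ii) directly). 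The exponential factors $e^{\pm\pi i(p+1/2)z}$ cancel between numerator and denominator; the surviving $q$-exponent in each term is $q^{m(2p+1)^2/(16(m+1))}$, which together with $C$ and the prefactor $-i\,e^{\pi i/(2(m+1))}$ collapses via $-i\cdot e^{\pi i(m+1)/(2(m+1))} = -i\cdot i = 1$ to produce the claimed $+\eta(2\tau)^3$ coefficient. The minus denominator sign from Note \ref{n3:note:2022-825a} 2) is precisely what supplies the minus sign between the two ratios in the braces of \eqref{n3:eqn:2022-825a} and \eqref{n3:eqn:2022-827a}. Part 2 is handled analogously using Proposition \ref{n3:prop:2022-824a} and Note \ref{n3:note:2022-825b} 2), with the half-integer index shift explaining why the factor $\eta(2\tau)^3$ acquires an extra $-i$.

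\medskip

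\noindent\textbf{Main obstacle.} The principal difficulty is bookkeeping: one must simultaneously track (i) the cancellation of the scalar $C$ between LHS and both RHS contributions, (ii) the correct completion of squares matching $(m+1)(j+\frac{1}{2(m+1)}+\frac{m(2p+1)}{4(m+1)})^2-\frac1m(k+\frac{m(2p+1)}{4})^2$, (iii) the sign $(-1)^{(m+1)j+k}$ arising from $(-1)^j\cdot(-1)^{mj}\cdot(-1)^k$, and (iv) the parity factor $\sigma(m)$ produced by Note \ref{n3:note:2022-825c}. Provided these bookkeeping steps are performed consistently, both formulas drop out after the substitutions described above.
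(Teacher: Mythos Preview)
Your proposal is correct and follows essentially the same approach as the paper: specialize Propositions \ref{n3:prop:2022-823a} and \ref{n3:prop:2022-824a} at $(m,\tau)\to(\tfrac{m}{2},2\tau)$ using the level-doubling identity $\theta_{j/2,m/2}^{(\pm)}(2\tau,2y)=\theta_{j,m}^{(\pm)}(\tau,y)$, substitute $z_1=z+\tfrac{\tau}{2}-\tfrac12+p\tau$, $z_2=z-\tfrac{\tau}{2}+\tfrac12-p\tau$, simplify all theta factors via Notes \ref{n3:note:2022-825a}--\ref{n3:note:2022-825c}, and finally multiply through by your constant $C=e^{\pi i m/(2(m+1))}q^{-m(2p+1)^2/(16(m+1))}$ (for part 2, by $q^{-m(2p+1)^2/(16(m+1))}$). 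Your bookkeeping of the scalar cancellation, the completion of squares, and the sign $(-1)^{(m+1)j+k}=(-1)^j(-1)^{mj}(-1)^k$ matches the paper's computation exactly.
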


\begin{proof} Letting $(m, \tau) \rightarrow (\frac{m}{2}, 2\tau)$ 
in \eqref{n3:eqn:2022-823p} and \eqref{n3:eqn:2022-824g} and using 
$\theta_{\frac{j}{2}, \frac{m}{2}}^{(\pm)}(2\tau, 2z) 
= \theta_{j,m}^{(\pm)}(\tau,z)$, we have 
\begin{subequations}
{\allowdisplaybreaks
\begin{eqnarray}
& & \hspace{-10mm}
\theta_{1, m+1}^{(-)}\Big(\tau, \frac{m(z_1-z_2)}{2(m+1)}\Big) \, 
\Phi^{[\frac{m}{2},0]}(2\tau, \, z_1, \, z_2, \, 0)
\nonumber
\\[2mm]
& = & \bigg[
\sum_{\substack{j, \, k \, \in \zzz \\[1mm] 0 \, < \, k \, \leq \, mj}}
-
\sum_{\substack{j, \, k \, \in \zzz \\[1mm] mj \, < \, k \, \leq \, 0}} 
\bigg] \, 
(-1)^j \, q^{2(\frac{m}{2}+\frac12)(j+\frac{1}{4(\frac{m}{2}+\frac12)})^2}
e^{\pi im(j+\frac{1}{4(\frac{m}{2}+\frac12)})(z_1-z_2)}
\nonumber
\\[-7mm]
& & \hspace{50mm}
\times \,\ 
e^{-\pi ik(z_1-z_2)} q^{-\frac{k^2}{m}} \, 
\big[\theta_{2k,m}-\theta_{-2k,m}\big]\Big(\tau, \frac{z_1+z_2}{2}\Big)
\nonumber
\\[2mm]
& & \hspace{-7mm}
- \, i \, \eta(2\tau)^3 \Bigg\{
\frac{\displaystyle 
\theta_{-1, m+1}^{(-)}\Big(\tau, 
\frac{z_1+z_2}{2}+\frac{z_1-z_2}{2(m+1)}\Big)}{
\vartheta_{11}(2\tau, z_1)}
+ 
\frac{\displaystyle 
\theta_{1, m+1}^{(-)}\Big(\tau, 
\frac{z_1+z_2}{2}+\frac{z_2-z_1}{2(m+1)}\Big)
}{\vartheta_{11}(2\tau, z_2)} \Bigg\}
\label{n3:eqn:2022-825c}
\\[3mm]
& & \hspace{-10mm}
\theta_{0, m+1}^{(-)}\Big(\tau, \frac{m(z_1-z_2)}{2(m+1)}\Big) \, 
\Phi^{[\frac{m}{2},\frac12]}\Big(2\tau, \, 
z+\frac{\tau}{2}-\frac12+p\tau, \, 
z-\frac{\tau}{2}+\frac12-p\tau, \, 0\Big)
\nonumber
\\[2mm]
&=&
\bigg[
\sum_{\substack{(j, k) \, \in \, \zzz \times \frac12 \zzz_{\rm odd} \\[1mm]
0 \, < \, k \, < \, mj}}
-
\sum_{\substack{(j, k) \, \in \, \zzz \times \frac12 \zzz_{\rm odd} \\[1mm]
mj \, < \, k \, < \, 0}}
\bigg] \, 
(-1)^j \, q^{2(\frac{m}{2}+\frac12)j^2}
e^{\pi imj(z_1-z_2)}
\nonumber
\\[-5mm]
& & \hspace{55mm}
\times \,\ 
e^{-\pi ik(z_1-z_2)} q^{-\frac{k^2}{m}} \, 
\big[\theta_{2k,m}-\theta_{-2k,m}\big]\Big(\tau, \frac{z_1+z_2}{2}\Big)
\nonumber
\\[2mm]
& & \hspace{-9mm}
- \, i \, \eta(2\tau)^3 \Bigg\{
\frac{\displaystyle 
\theta_{0, m+1}^{(-)}\Big(2\tau,  
\frac{z_1+z_2}{2}+\frac{z_1-z_2}{2(m+1)}\Big)}{
\vartheta_{11}(2\tau, z_1)}
+ 
\frac{\displaystyle 
\theta_{0, m+1}^{(-)}\Big(2\tau, 
\frac{z_1+z_2}{2}+\frac{z_2-z_1}{2(m+1)}\Big)
}{\vartheta_{11}(2\tau, z_2)} \Bigg\}
\label{n3:eqn:2022-827b}
\end{eqnarray}}
\end{subequations}
We put $\left\{
\begin{array}{ccc}
z_1 &=& z+\frac{\tau}{2}-\frac12+p\tau \\[2mm]
z_2 &=& z-\frac{\tau}{2}+\frac12-p\tau
\end{array} \right. $, then 
$\left\{
\begin{array}{ccc}
z_1-z_2 &=& (2p+1)\tau-1 \\[0mm]
z_1+z_2 &=& 2 z
\end{array}\right. $ and the above equations \eqref{n3:eqn:2022-825c}
and \eqref{n3:eqn:2022-827b} become as follows:
%
\begin{subequations}
{\allowdisplaybreaks
\begin{eqnarray}
& &\hspace{-10mm}
\theta_{1, m+1}^{(-)}\Big(\tau, \frac{m((2p+1)\tau-1)}{2(m+1)}\Big)  \, 
\Phi^{[\frac{m}{2},0]}\Big(2\tau, \, 
z+\frac{\tau}{2}-\frac12+p\tau, \, 
z-\frac{\tau}{2}+\frac12-p\tau, \, 0\Big)
\nonumber
\\[1mm]
&=&e^{-\frac{\pi im}{2(m+1)}}
\bigg[
\sum_{\substack{j, \, k \, \in \zzz \\[1mm] 0 \, < \, k \, \leq \, mj}}
-
\sum_{\substack{j, \, k \, \in \zzz \\[1mm] mj \, < \, k \, \leq \, 0}} 
\bigg] \, 
(-1)^{(m+1)j} \, q^{(m+1)(j+\frac{1}{2(m+1)})^2} \, 
q^{\frac{m}{2}(2p+1)(j+\frac{1}{2(m+1)})} \, 
\nonumber
\\[1mm]
& & \hspace{30mm}
\times \,\ 
(-1)^k q^{-\frac{k}{2}(2p+1)} \, 
q^{-\frac{k^2}{m}} \, 
\big[\theta_{2k,m}-\theta_{-2k,m}\big](\tau, z)
\nonumber
\\[3mm]
& & \hspace{-5mm}
- \, i \, \eta(2\tau)^3 \, \Bigg\{
\underbrace{\frac{\displaystyle 
\theta_{-1, \, m+1}^{(-)}\Big(\tau, \, z+\frac{(2p+1)\tau-1}{2(m+1)}\Big)
}{\vartheta_{11}(2\tau, z_1)}
+ 
\frac{\displaystyle 
\theta_{1, \, m+1}^{(-)}\Big(\tau, \, z+\frac{-(2p+!)\tau+1}{2(m+1)}\Big)
}{\vartheta_{11}(2\tau, z_2)}}_{\rm (I)}
\Bigg\}
\label{n3:eqn:2022-825d}
\\[3mm]
& & \hspace{-10mm}
\theta_{0, m+1}^{(-)}\Big(\tau, 
\frac{m((2p+1)\tau-1)}{2(m+1)}
\Big) \, 
\Phi^{[\frac{m}{2},\frac12]}\Big(2\tau, \, 
z+\frac{\tau}{2}-\frac12+p\tau, \, 
z-\frac{\tau}{2}+\frac12-p\tau, \, 0\Big)
\nonumber
\\[3mm]
&=&
\bigg[
\sum_{\substack{(j, k) \, \in \, \zzz \times \frac12 \zzz_{\rm odd} \\[1mm]
0 \, < \, k \, < \, mj
}}
-
\sum_{\substack{(j, k) \, \in \, \zzz \times \frac12 \zzz_{\rm odd} \\[1mm]
mj \, < \, k \, < \, 0
}}
\bigg] \, 
(-1)^{(m+1)j} \, q^{(m+1)j^2} \, 
q^{\frac{m}{2}(2p+1)j} 
\nonumber
\\[2mm]
& & \hspace{30mm}
\times \,\ 
(-1)^k q^{-\frac{k}{2}(2p+1)} \, 
q^{-\frac{k^2}{m}} \, 
\big[\theta_{2k,m}-\theta_{-2k,m}\big](\tau, z)
\nonumber
\\[3mm]
& & \hspace{-7mm}
- \, i \, \eta(2\tau)^3 \Bigg\{
\underbrace{\frac{\displaystyle 
\theta_{0, \, m+1}^{(-)}\Big(\tau, 
\frac{z_1+z_2}{2}+\frac{z_1-z_2}{2(m+1)}\Big)}{
\vartheta_{11}(2\tau, z_1)}
+ 
\frac{\displaystyle 
\theta_{0, \, m+1}^{(-)}\Big(\tau, 
\frac{z_1+z_2}{2}+\frac{z_2-z_1}{2(m+1)}\Big)
}{\vartheta_{11}(2\tau, z_2)} }_{\rm (II)}
\Bigg\}
\label{n3:eqn:2022-827c}
\end{eqnarray}}
\end{subequations}
(I) and (II) and the LHS's of these equations \eqref{n3:eqn:2022-825d} 
and \eqref{n3:eqn:2022-827c} are computed by using 
Notes \ref{n3:note:2022-825a}, \ref{n3:note:2022-825b} and 
\ref{n3:note:2022-825c} as follows:
\begin{subequations}
{\allowdisplaybreaks
\begin{eqnarray}
& & \hspace{-10mm}
{\rm (I)} =
\frac{\displaystyle 
e^{\frac{\pi i}{2(m+1)}} q^{-\frac{(2p+1)^2}{16(m+1)}} 
e^{-\pi i(p+\frac12)z} 
\theta_{p-\frac12, m+1}(\tau,z)
}{\displaystyle 
q^{-\frac{1}{16}(2p+1)^2} \, e^{-\pi i(p+\frac12)z} \, 
\theta_{p-\frac12, 1}(\tau,z)}
+ 
\frac{\displaystyle 
e^{\frac{\pi i}{2(m+1)}} q^{-\frac{(2p+1)^2}{16(m+1)}} 
e^{\pi i(p+\frac12)z} 
\theta_{-p+\frac12, m+1}(\tau,z)
}{\displaystyle - \, 
q^{-\frac{1}{16}(2p+1)^2} \, e^{\pi i(p+\frac12)z} \, 
\theta_{-p+\frac12, 1}(\tau,z)}
\nonumber
\\[1mm]
&=&
e^{\frac{\pi i}{2(m+1)}} \, 
q^{\frac{m}{16(m+1)}(2p+1)^2} \, \bigg\{
\frac{\theta_{p-\frac12, m+1}(\tau,z)}{\theta_{p-\frac12, 1}(\tau,z)}
\, - \, 
\frac{\theta_{-p+\frac12, m+1}(\tau,z)}{\theta_{-p+\frac12, 1}(\tau,z)}
\bigg\}
\label{n3:eqn:2022-826b1}
\\[2mm]
& & \hspace{-10mm}
\text{LHS of \eqref{n3:eqn:2022-825d}}
\,\ = \,\ 
e^{-\frac{\pi im}{2(m+1)}} \, 
q^{-\frac{m^2}{16(m+1)}(2p+1)^2} \, 
\theta_{1+m(p+\frac12), m+1}^{(\sigma(m))}(\tau,0)
\nonumber
\\[0mm]
& & \hspace{20mm}
\times \,\ 
\Phi^{[\frac{m}{2},0]}\Big(2\tau, \,\ 
z+\frac{\tau}{2}-\frac12+p\tau, \,\ 
z-\frac{\tau}{2}+\frac12-p\tau, \,\ 0\Big)
\label{n3:eqn:2022-826b2}
\end{eqnarray}}
\end{subequations}
and
\begin{subequations}
{\allowdisplaybreaks
\begin{eqnarray}
& & \hspace{-10mm}
{\rm (II)} \, = \, 
\frac{\displaystyle 
q^{-\frac{(2p+1)^2}{16(m+1)}} \, 
e^{-\pi i(p+\frac12)z} \, 
\theta_{p+\frac12, m+1}(\tau,z)
}{\displaystyle 
q^{-\frac{1}{16}(2p+1)^2} \, e^{-\pi i(p+\frac12)z} \, 
\theta_{p-\frac12, 1}(\tau,z)}
\, + \, 
\frac{\displaystyle 
q^{-\frac{(2p+1)^2}{16(m+1)}} \, 
e^{\pi i(p+\frac12)z} \, 
\theta_{-p-\frac12, m+1}(\tau,z)
}{\displaystyle - \, 
q^{-\frac{1}{16}(2p+1)^2} \, e^{\pi i(p+\frac12)z} \, 
\theta_{-p+\frac12, 1}(\tau,z)}
\nonumber
\\[2mm]
&=&
q^{\frac{m}{16(m+1)}(2p+1)^2} \, \bigg\{
\frac{\theta_{p+\frac12, m+1}(\tau,z)}{\theta_{p-\frac12, 1}(\tau,z)}
\, - \, 
\frac{\theta_{-p-\frac12, m+1}(\tau,z)}{\theta_{-p+\frac12, 1}(\tau,z)}
\bigg\}
\label{n3:eqn:2022-827d1}
\\[2mm]
& & \hspace{-10mm}
\text{LHS of \eqref{n3:eqn:2022-827c}}
\,\ = \,\ 
q^{-\frac{m^2}{16(m+1)}(2p+1)^2} \, 
\theta_{m(p+\frac12), m+1}^{(\sigma(m))}(\tau,0)
\nonumber
\\[1mm]
& & \hspace{20mm}
\times \,\ 
\Phi^{[\frac{m}{2},\frac12]}\Big(2\tau, \,\ 
z+\frac{\tau}{2}-\frac12+p\tau, \,\ 
z-\frac{\tau}{2}+\frac12-p\tau, \,\ 0\Big)
\label{n3:eqn:2022-827d2}
\end{eqnarray}}
\end{subequations}
Substituting \eqref{n3:eqn:2022-826b1} and 
\eqref{n3:eqn:2022-826b2} into \eqref{n3:eqn:2022-825d}, we have
{\allowdisplaybreaks
\begin{eqnarray}
& & \hspace{-8mm}
e^{-\frac{\pi im}{2(m+1)}} 
q^{-\frac{m^2}{16(m+1)}(2p+1)^2} 
\theta_{1+m(p+\frac12), m+1}^{(\sigma(m))}(\tau,0)
\Phi^{[\frac{m}{2},0]}\Big(2\tau, \, 
z+\frac{\tau}{2}-\frac12+p\tau, \, 
z-\frac{\tau}{2}+\frac12-p\tau, \, 0\Big)
\nonumber
\\[2mm]
&=&
e^{-\frac{\pi im}{2(m+1)}}\Big[
\sum_{\substack{j, \, k \, \in \zzz \\[1mm] 0 \, < \, k \, \leq \, mj}}
-
\sum_{\substack{j, \, k \, \in \zzz \\[1mm] mj \, < \, k \, \leq \, 0}} 
\Big] \, 
(-1)^{(m+1)j} \, q^{(m+1)(j+\frac{1}{2(m+1)})^2} \, 
q^{\frac{m}{2}(2p+1)(j+\frac{1}{2(m+1)})} 
\nonumber
\\[0mm]
& & \hspace{40mm}
\times \,\ 
(-1)^k q^{-\frac{k^2}{m}-\frac{k}{2}(2p+1)} \, 
\big[\theta_{2k,m}-\theta_{-2k,m}\big](\tau, z)
\nonumber
\\[2mm]
& & \hspace{-2mm}
- \,\ i \, \eta(2\tau)^3 \, 
e^{\frac{\pi i}{2(m+1)}} \, 
q^{\frac{m}{16(m+1)}(2p+1)^2} \, \Bigg\{
\frac{\theta_{p-\frac12, m+1}(\tau,z)}{\theta_{p-\frac12, 1}(\tau,z)}
\, - \, 
\frac{\theta_{-p+\frac12, m+1}(\tau,z)}{\theta_{-p+\frac12, 1}(\tau,z)}
\Bigg\}
\label{n3:eqn:2022-826c}
\end{eqnarray}}
And substituting \eqref{n3:eqn:2022-827d1} and 
\eqref{n3:eqn:2022-827d2} into \eqref{n3:eqn:2022-827c}, we have
{\allowdisplaybreaks
\begin{eqnarray}
& & \hspace{-7mm}
q^{-\frac{m^2}{16(m+1)}(2p+1)^2} \, 
\theta_{m(p+\frac12), m+1}^{(\sigma(m))}(\tau,0) \, 
\Phi^{[\frac{m}{2},\frac12]}\Big(2\tau, \,\ 
z+\frac{\tau}{2}-\frac12+p\tau, \,\ 
z-\frac{\tau}{2}+\frac12-p\tau, \,\ 0\Big)
\nonumber
\\[2mm]
&=&
\bigg[
\sum_{\substack{(j, k) \, \in \, \zzz \times \frac12 \zzz_{\rm odd} \\[1mm]
0 \, < \, k \, < \, mj}}
-
\sum_{\substack{(j, k) \, \in \, \zzz \times \frac12 \zzz_{\rm odd} \\[1mm]
mj \, < \, k \, < \, 0}}
\bigg] \, 
(-1)^{(m+1)j} \, q^{(m+1)j^2} \, q^{\frac{m}{2}(2p+1)j} 
\nonumber
\\[-8mm]
& & \hspace{60mm}
\times \,\ 
(-1)^k q^{-\frac{k^2}{m}-\frac{k}{2}(2p+1)} \, 
\big[\theta_{2k,m}-\theta_{-2k,m}\big](\tau, z)
\nonumber
\\[2mm]
& & \hspace{-2mm}
- \,\ i \, \eta(2\tau)^3 \, 
q^{\frac{m}{16(m+1)}(2p+1)^2} \, \bigg\{
\frac{\theta_{p+\frac12, m+1}(\tau,z)}{\theta_{p-\frac12, 1}(\tau,z)}
\, - \, 
\frac{\theta_{-p-\frac12, m+1}(\tau,z)}{\theta_{-p+\frac12, 1}(\tau,z)}
\bigg\}
\label{n3:eqn:2022-827e}
\end{eqnarray}}
Multiplying \, 
$e^{\frac{\pi im}{2(m+1)}} \, q^{-\frac{m}{16(m+1)}(2p+1)^2}$ 
to both sides of \eqref{n3:eqn:2022-826c} we obtain the formula 
\eqref{n3:eqn:2022-825a},
and multiplying \, $q^{-\frac{m}{16(m+1)}(2p+1)^2}$ 
to both sides of \eqref{n3:eqn:2022-827e} we obtain the formula 
\eqref{n3:eqn:2022-827a}. Thus the proof of Lemma 
\ref{n3:lemma:2022-825a} is completed.
\end{proof}

\vspace{1mm}

\begin{lemma} 
\label{n3:lemma:2022-827b}
For $m \in \nnn, \, s \in \frac12 \zzz$ and $a \in \zzz_{\geq 0}$ 
the following formula holds:
{\allowdisplaybreaks
\begin{eqnarray}
& & \hspace{-7mm}
\Phi^{(\pm)[\frac{m}{2},s]}(2\tau, \, z_1+2a\tau, \, z_2-2a\tau, \, 0) 
\nonumber
\\[1mm]
&=&
(\pm 1)^a \, e^{\pi ima(z_1-z_2)} \, q^{ma^2} \, \bigg\{
\Phi^{(\pm)[\frac{m}{2},s]}(2\tau, z_1,z_2,0)
\nonumber
\\[1mm]
& & \hspace{-5mm}
- \sum_{\substack{k \in \zzz \\[1mm] 1 \leq k \leq am}}
e^{-\pi i(k-s)(z_1-z_2)} \, 
q^{- \frac{(k-s)^2}{m}} \, 
\big[\theta^{(\pm)}_{2(k-s), \, m} - 
\theta^{(\pm)}_{-2(k-s), \, m}\big] \Big(\tau, \frac{z_1+z_2}{2}\Big)
\bigg\}
\label{n3:eqn:2022-827f}
\end{eqnarray}}
\end{lemma}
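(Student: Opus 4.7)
The plan is to prove this by induction on $a \in \zzz_{\geq 0}$, with $a = 0$ being trivial. For the inductive step it suffices to establish the $a = 1$ case and then iterate, so I would split the work into (i) a single one-step shift identity and (ii) a telescoping argument collecting the corrections.

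For the one-step identity, I would start from the defining series representation of $\Phi^{(\pm)[\frac m2, s]}(2\tau, z_1, z_2, 0)$, which is an Appell--Lerch-type sum of the form $\sum_{r\in\zzz}(\pm 1)^r e^{2\pi imr z_2}q^{\frac{m}{2}r^2 + sr}/(1-e^{2\pi iz_1}q^{2r})$ up to an overall phase. Substituting $(z_1,z_2)\mapsto(z_1+2\tau, z_2-2\tau)$ and reindexing $r\mapsto r+1$ restores the denominator to its original form and extracts the prefactor $\pm e^{\pi im(z_1-z_2)}q^{m}$, so that the shifted series becomes the original $\Phi^{(\pm)[\frac m2, s]}$ plus a single residual summand coming from the boundary of the reindexing. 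The residual summand is then rewritten, using the theta identities of Note \ref{n3:note:2022-825a} together with the standard reflection $\theta^{(\pm)}_{j,m}(-z) = \pm\theta^{(\pm)}_{-j,m}(z)$, as the $k=1$ term in the claimed correction sum, involving $[\theta^{(\pm)}_{2(1-s), m}-\theta^{(\pm)}_{-2(1-s), m}](\tau, \tfrac{z_1+z_2}{2})$.

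Iterating: applying the one-step identity to $\Phi^{(\pm)[\frac m2, s]}(2\tau, z_1+2(a-1)\tau, z_2-2(a-1)\tau, 0)$ and substituting the inductive hypothesis produces the $a$-th identity. The prefactor $(\pm 1)^a e^{\pi ima(z_1-z_2)}q^{ma^2}$ assembles multiplicatively from the one-step prefactors (in particular $q^{m(a-1)^2} \cdot q^{m(2a-1)} = q^{ma^2}$), while the correction index set grows from $\{1,\dots,(a-1)m\}$ to $\{1,\dots,am\}$ by picking up the $m$ new residual terms contributed by the $a$-th shift. Structurally this lemma is the rescaled $(m,\tau)\mapsto(m/2, 2\tau)$ version of Lemma 2.5 of \cite{W2022b} already invoked in the proofs of Propositions \ref{n3:prop:2022-823a} and \ref{n3:prop:2022-824a}, with an additional dependence on $s$ which only translates the summation lattice by $s$.

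The main obstacle will be the signed bookkeeping: carefully tracking how the $(\pm 1)^a$ propagates through the iteration in tandem with the superscripts on $\theta^{(\pm)}_{\pm 2(k-s), m}$, and ensuring that for $s = \tfrac12$ the half-integer offset $k-s$ in the theta argument lines up consistently with the integer summation $k\in\{1,\dots,am\}$ after each reindexing step. Once the $a=1$ base and the correct propagation of phases are pinned down, the telescoping is routine.
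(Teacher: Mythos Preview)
Your inductive approach is sound and would work, but the paper's proof is a single sentence: it observes that the formula is exactly Lemma~4.3 of \cite{W2022c} after the substitution $(m,\tau)\mapsto(\tfrac{m}{2},2\tau)$. You do recognize at the end that this is a rescaled version of a known shift identity, though you point to Lemma~2.5 of \cite{W2022b} rather than Lemma~4.3 of \cite{W2022c}; the latter already carries the general $s\in\tfrac12\zzz$ and the $(\pm)$ superscript, so no additional work on the $s$-dependence is needed. What you propose is essentially a re-derivation of that cited lemma from scratch---valid, but redundant in context.

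Two minor corrections if you carry out the direct argument: (i) Note~\ref{n3:note:2022-825a} is about the specialized arguments $z\pm\tfrac{\tau}{2}\mp\tfrac12+p\tau$ used in Lemma~\ref{n3:lemma:2022-825a} and is not the right tool for the generic one-step theta rewriting here; (ii) your description is internally inconsistent---you first say the single shift yields ``the $k=1$ term,'' then later (correctly) say each step contributes $m$ new residual terms so that the index set grows from $\{1,\dots,(a-1)m\}$ to $\{1,\dots,am\}$. The second statement is the right one.
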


\begin{proof} This formula follows immediately from Lemma 4.3 
in \cite{W2022c} by letting $(m, \tau) \rightarrow (\frac{m}{2}, 2\tau)$.
\end{proof}

\vspace{1mm}

\begin{lemma} 
\label{n3:lemma:2022-827c}
For $m \in \nnn, \, s \in \frac12 \zzz$ and $a \in \zzz_{\geq 0}$ 
the following formula holds:
{\allowdisplaybreaks
\begin{eqnarray}
& & \hspace{-7mm}
\Phi^{(\pm)[\frac{m}{2},s]}\Big(2\tau, \,\ 
z+\frac{\tau}{2}-\frac12, \,\ 
z-\frac{\tau}{2}+\frac12, \,\ \frac{\tau}{8}\Big)
\nonumber
\\[2mm]
&=&
(\pm 1)^a \, (-1)^{ma} \, q^{-m(a+\frac14)^2} \, 
\Phi^{(\pm)[\frac{m}{2},s]}\Big(2\tau, \,\ 
z+\frac{\tau}{2}-\frac12+2a\tau, \,\ 
z-\frac{\tau}{2}+\frac12-2a\tau, \,\ 0\Big) 
\nonumber
\\[1mm]
& & \hspace{-5mm}
+ \,\ e^{-\pi is}
\sum_{\substack{k \in \zzz \\[1mm] 1 \, \leq \, k \, \leq \, am}} 
(-1)^k  \, q^{-\frac{1}{m}(k-s+\frac{m}{4})^2} \, 
\big[\theta^{(\pm)}_{2(k-s), \, m} - 
\theta^{(\pm)}_{-2(k-s), \, m}\big] (\tau, z)
\label{n3:eqn:2022-827g}
\end{eqnarray}}
\end{lemma}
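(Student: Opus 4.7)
The plan is to deduce this lemma directly from Lemma \ref{n3:lemma:2022-827b} after first absorbing the nonzero fourth argument $\tau/8$. Setting $a=0$ in the target identity collapses the sum and reduces the claim to the one-point identity
$\Phi^{(\pm)[\frac{m}{2},s]}(2\tau,z_1,z_2,\tau/8)=q^{-m/16}\,\Phi^{(\pm)[\frac{m}{2},s]}(2\tau,z_1,z_2,0)$.
This is the base case of the argument and should be immediate from the definition of $\Phi^{(\pm)[m,s]}(\tau,z_1,z_2,t)$ used in \cite{W2022c}, where the fourth variable enters only as an exponential prefactor $e^{-2\pi imt}$: at level $m/2$ and with $t=\tau/8$ this gives exactly $e^{-\pi im\tau/8}=q^{-m/16}$. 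I would record this translation first.

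Next, I would invert Lemma \ref{n3:lemma:2022-827b} to write
\[
\Phi^{(\pm)[\frac{m}{2},s]}(2\tau,z_1,z_2,0)
=(\pm1)^{a}e^{-\pi ima(z_1-z_2)}q^{-ma^{2}}
\Phi^{(\pm)[\frac{m}{2},s]}(2\tau,z_1+2a\tau,z_2-2a\tau,0)
+\Sigma,
\]
where $\Sigma$ is the finite sum over $1\leq k\leq am$ of the theta correction terms in Lemma \ref{n3:lemma:2022-827b} (moved from the parenthesized subtraction to an additive term after dividing by the prefactor). Multiplying through by $q^{-m/16}$ then rewrites the left-hand side as the target $\Phi(2\tau,z_1,z_2,\tau/8)$ using the translation of the previous paragraph.

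The remaining work is substitution and bookkeeping. Put $z_1=z+\tfrac{\tau}{2}-\tfrac12$, $z_2=z-\tfrac{\tau}{2}+\tfrac12$, so that $z_1-z_2=\tau-1$ and $z_1+z_2=2z$. Replace the phases by
$e^{-\pi ima(\tau-1)}=(-1)^{ma}q^{-ma/2}$ and
$e^{-\pi i(k-s)(\tau-1)}=e^{-\pi is}(-1)^{k}q^{-(k-s)/2}$,
and complete the square in the $q$-exponents:
\[
-ma^{2}-\tfrac{ma}{2}-\tfrac{m}{16}=-m\bigl(a+\tfrac14\bigr)^{2},
\qquad
-\tfrac{(k-s)^{2}}{m}-\tfrac{k-s}{2}-\tfrac{m}{16}=-\tfrac{1}{m}\bigl(k-s+\tfrac{m}{4}\bigr)^{2}.
\]
These identities turn the inverted form of Lemma \ref{n3:lemma:2022-827b} (times $q^{-m/16}$) into exactly the right-hand side of \eqref{n3:eqn:2022-827g}, with the factor $\theta^{(\pm)}_{2(k-s),m}-\theta^{(\pm)}_{-2(k-s),m}$ already evaluated at $(z_1+z_2)/2=z$.

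The only substantive point is the $t$-translation used in the first paragraph; everything else is routine manipulation of signs and Gaussian exponents. Thus the main obstacle, if any, lies in citing or verifying from the definition that the fourth-argument dependence of $\Phi^{(\pm)[m/2,s]}$ factors out exactly as $q^{-m/16}$ when $t=\tau/8$. Once this is in hand, the proof is purely mechanical.
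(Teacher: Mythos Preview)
Your proposal is correct and follows essentially the same route as the paper: both arguments specialize Lemma~\ref{n3:lemma:2022-827b} at $z_1=z+\tfrac{\tau}{2}-\tfrac12$, $z_2=z-\tfrac{\tau}{2}+\tfrac12$, invoke the $t$-translation $\Phi^{(\pm)[\frac{m}{2},s]}(2\tau,z_1,z_2,\tfrac{\tau}{8})=q^{-m/16}\Phi^{(\pm)[\frac{m}{2},s]}(2\tau,z_1,z_2,0)$, and then complete the square in the $q$-exponents exactly as you describe. The only cosmetic difference is that the paper substitutes first and then multiplies through by $(\pm1)^a(-1)^{ma}q^{-m(a+\frac14)^2}$, whereas you invert Lemma~\ref{n3:lemma:2022-827b} first and multiply by $q^{-m/16}$; the computations are otherwise identical.
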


\begin{proof} 
Letting $\left\{
\begin{array}{ccc}
z_1 &=& z+\frac{\tau}{2}-\frac12 \\[1mm]
z_2 &=& z-\frac{\tau}{2}+\frac12
\end{array} \right. $ i.e, $\left\{
\begin{array}{ccc}
z_1-z_2 &=& \tau-1 \\[1mm]
z_1+z_2 &=& 2 z
\end{array}\right. $ in the formula \eqref{n3:eqn:2022-827f}, 
we have 

{\allowdisplaybreaks
\begin{eqnarray*}
& & \hspace{-7mm}
\Phi^{(\pm)[\frac{m}{2},s]}\Big(2\tau, \,\ 
z+\frac{\tau}{2}-\frac12+2a\tau, \,\ 
z-\frac{\tau}{2}+\frac12-2a\tau, \,\ 0\Big) 
\\[2mm]
&=&
(\pm 1)^a \, 
e^{\pi ima(\tau-1)} \, 
q^{ma^2} \, \bigg\{
\Phi^{(\pm)[\frac{m}{2},s]}(2\tau, z_1,z_2,0)
\\[0mm]
& & 
- \sum_{\substack{k \in \zzz \\[1mm] 1 \leq k \leq am}}
e^{-\pi i(k-s)(\tau-1)} \, 
q^{- \frac{(k-s)^2}{m}} \, 
\big[\theta^{(\pm)}_{2(k-s), \, m} - 
\theta^{(\pm)}_{-2(k-s), \, m}\big] (\tau, z)
\bigg\}
\\[3mm]
&=&
(\pm 1)^a \, (-1)^{ma} \, 
q^{m(a+\frac14)^2} \, \bigg\{q^{-\frac{m}{16}} \, 
\Phi^{(\pm)[\frac{m}{2},s]}(2\tau, z_1,z_2,0)
\\[2mm]
& & - \,\ e^{-\pi is}
\sum_{\substack{k \in \zzz \\[1mm] 1 \, \leq \, k \, \leq \, am}} 
(-1)^k  \, q^{-\frac{1}{m}(k-s+\frac{m}{4})^2} \, 
\big[\theta^{(\pm)}_{2(k-s), \, m} - 
\theta^{(\pm)}_{-2(k-s), \, m}\big] (\tau, z)
\bigg\}
\end{eqnarray*}}
Multiplying \, $(\pm 1)^a \, (-1)^{ma} \, q^{-m(a+\frac14)^2}$,
we have
{\allowdisplaybreaks
\begin{eqnarray*}
& & \hspace{-7mm}
(\pm 1)^a \, (-1)^{ma} \, q^{-m(a+\frac14)^2} \, 
\Phi^{(\pm)[\frac{m}{2},s]}\Big(2\tau, \,\ 
z+\frac{\tau}{2}-\frac12+2a\tau, \,\ 
z-\frac{\tau}{2}+\frac12-2a\tau, \,\ 0\Big) 
\\[2mm]
&=&
\underbrace{q^{-\frac{m}{16}} \, 
\Phi^{(\pm)[\frac{m}{2},s]}(2\tau, z_1,z_2,0)}_{
\substack{|| \\[-1.5mm] {\displaystyle \hspace{-5mm}
\Phi^{(\pm)[\frac{m}{2},s]}
\Big(2\tau, \, z_1, \, z_2, \, \frac{\tau}{8}\Big)
}}}
\\[1mm]
& & 
- \,\ e^{-\pi is}
\sum_{\substack{k \in \zzz \\[1mm] 1 \, \leq \, k \, \leq \, am}} 
(-1)^k  \, q^{-\frac{1}{m}(k-s+\frac{m}{4})^2} \, 
\big[\theta^{(\pm)}_{2(k-s), \, m} - 
\theta^{(\pm)}_{-2(k-s), \, m}\big] (\tau, z)
\end{eqnarray*}}
proving Lemma \ref{n3:lemma:2022-827c}.
\end{proof}

\vspace{1mm}


\begin{prop} 
\label{n3:prop:2022-827a}
For $m \in \nnn$ and $p \in \zzz_{\geq 0}$ the following formulas hold:
\begin{subequations}
\begin{enumerate}
\item[{\rm 1)}] \,\ $\theta^{(\sigma(m))}_{2p-1-\frac{m}{2}, m+1}(\tau,0) \, 
\Phi^{[\frac{m}{2},0]}\Big(2\tau, \,\ 
z+\dfrac{\tau}{2}-\dfrac12, \,\ 
z-\dfrac{\tau}{2}+\dfrac12, \,\ \dfrac{\tau}{8}\Big)$
{\allowdisplaybreaks
\begin{eqnarray}
&=&
(-1)^p \, \eta(2\tau)^3 \, \bigg\{
\frac{\theta_{2p-\frac12, m+1}(\tau,z)}{\theta_{-\frac12, 1}(\tau,z)}
\, - \, 
\frac{\theta_{-2p+\frac12, m+1}(\tau,z)}{\theta_{\frac12, 1}(\tau,z)}
\bigg\}
\nonumber
\\[2mm]
& &\hspace{-5mm}
+ \,\ (-1)^p \, \bigg[
\sum_{\substack{j, \, k \, \in \zzz \\[1mm] 0 \, < \, k \, \leq \, mj}}
-
\sum_{\substack{j, \, k \, \in \zzz \\[1mm] mj \, < \, k \, \leq \, 0}} \bigg] 
(-1)^{(m+1)j+k} \, 
q^{(m+1)(j+\frac{1}{2(m+1)}+\frac{m(4p+1)}{4(m+1)})^2
\, - \, \frac{1}{m}(k+\frac{m(4p+1)}{4})^2} 
\nonumber
\\[-2mm]
& & \hspace{60mm}
\times \,\ 
\big[\theta_{2k,m}-\theta_{-2k,m}\big](\tau, z)
\nonumber
\\[2mm]
& & \hspace{-5mm}
+ \,\ 
\theta^{(\sigma(m))}_{2p-1-\frac{m}{2}, m+1}(\tau,0)
\sum_{\substack{k \, \in \zzz \\[1mm] 1 \, \leq \, k \, \leq \, pm}} 
(-1)^k  \, q^{-\frac{1}{m}(k+\frac{m}{4})^2} \, 
\big[\theta_{2k, \, m} - \theta_{-2k, \, m}\big] (\tau, z)
\label{n3:eqn:2022-827h1}
\end{eqnarray}}
\item[{\rm 2)}] \,\ $\theta^{(\sigma(m))}_{2p-\frac{m}{2}, m+1}(\tau,0) \, 
\Phi^{[\frac{m}{2},\frac12]}\Big(2\tau, \,\ 
z+\dfrac{\tau}{2}-\dfrac12, \,\ 
z-\dfrac{\tau}{2}+\dfrac12, \,\ \dfrac{\tau}{8}\Big)$
{\allowdisplaybreaks
\begin{eqnarray}
&=&- \,\ i \, (-1)^p \, \eta(2\tau)^3 \, \bigg\{
\frac{\theta_{2p+\frac12, m+1}(\tau,z)}{\theta_{-\frac12, 1}(\tau,z)}
\, - \, 
\frac{\theta_{-2p-\frac12, m+1}(\tau,z)}{\theta_{\frac12, 1}(\tau,z)}
\bigg\}
\nonumber
\\[2mm]
& & \hspace{-5mm}
+ \, (-1)^p \bigg[
\sum_{\substack{(j, k) \, \in \, \zzz \times \frac12 \zzz_{\rm odd} \\[1mm]
0 \, < \, k \, < \, mj}}
-
\sum_{\substack{(j, k) \, \in \, \zzz \times \frac12 \zzz_{\rm odd} \\[1mm]
mj \, < \, k \, < \, 0}}
\bigg] 
(-1)^{(m+1)j+k} q^{(m+1)(j+\frac{m(4p+1)}{4(m+1)})^2
\, - \, \frac{1}{m}(k+\frac{m(4p+1)}{4})^2} 
\nonumber
\\[-6mm]
& & \hspace{65mm}
\times \,\ \big[\theta_{2k,m}-\theta_{-2k,m}\big](\tau, z)
\nonumber
\\[3mm]
& & \hspace{-5mm}
- \, i \, 
\theta^{(\sigma(m))}_{2p-\frac{m}{2}, m+1}(\tau,0) \hspace{-2mm}
\sum_{\substack{k \in \zzz \\[1mm] 1 \, \leq \, k \, \leq \, pm}} 
(-1)^k q^{-\frac{1}{m}(k-\frac12+\frac{m}{4})^2}
\big[\theta_{2k-1, \, m} - \theta_{-(2k-1), \, m}\big] (\tau, z)
\label{n3:eqn:2022-827h2}
\end{eqnarray}}
\end{enumerate}
\end{subequations}
where $\sigma(m)$ is defined by \eqref{n3:eqn:2022-825b}.
\end{prop}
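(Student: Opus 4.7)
The strategy is to combine Lemma \ref{n3:lemma:2022-827c} with Lemma \ref{n3:lemma:2022-825a}, choosing the shift parameter $a$ in Lemma \ref{n3:lemma:2022-827c} equal to the proposition's $p$, and the parameter (also called $p$) in Lemma \ref{n3:lemma:2022-825a} equal to $2p$, so that the $2a\tau = 2p\tau$ shift in the former matches the $p\tau = 2p\tau$ shift in the latter.

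Concretely, I would first apply Lemma \ref{n3:lemma:2022-827c} with $a=p$ to split
\[
\Phi^{[\frac{m}{2},s]}\Big(2\tau,\, z+\tfrac{\tau}{2}-\tfrac12,\, z-\tfrac{\tau}{2}+\tfrac12,\, \tfrac{\tau}{8}\Big)
\]
into a main term proportional to $q^{-m(p+\frac14)^2}\,\Phi^{[\frac{m}{2},s]}(2\tau, z+\tfrac{\tau}{2}-\tfrac12+2p\tau, z-\tfrac{\tau}{2}+\tfrac12-2p\tau, 0)$ together with an explicit sum over $1\le k\le pm$ of theta combinations. I would then multiply the whole identity by $\theta^{(\sigma(m))}_{1+m(2p+\frac12),m+1}(\tau,0)$ in the $s=0$ case (respectively $\theta^{(\sigma(m))}_{m(2p+\frac12),m+1}(\tau,0)$ in the $s=\frac12$ case) and apply Lemma \ref{n3:lemma:2022-825a} with $p\mapsto 2p$ to rewrite the product $\theta^{(\sigma(m))}\,\Phi^{[\frac{m}{2},s]}(2\tau,\ldots,0)$ as an explicit double sum plus a theta-ratio term. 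The exponential prefactors from the two lemmas match exactly: $q^{-m(p+\frac14)^2}\cdot q^{\frac{m}{16}(4p+1)^2}=1$.

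Next I would reduce the theta-constant index modulo $2(m+1)$. For $s=0$, the identity $1+m(2p+\tfrac12)=2p(m+1)-(2p-1-\tfrac{m}{2})$ together with $\theta^{(\pm)}_{n+2(m+1),m+1}=\theta^{(\pm)}_{n,m+1}$ gives $\theta^{(\sigma(m))}_{1+m(2p+\frac12),m+1}(\tau,0) = \epsilon(m)\,\theta^{(\sigma(m))}_{2p-1-\frac{m}{2},m+1}(\tau,0)$, where $\epsilon(m)=-1$ for even $m$ (via $\theta^{(-)}_{-n,k}=-\theta^{(-)}_{n,k}$) and $\epsilon(m)=+1$ for odd $m$ (via $\theta^{(+)}_{-n,k}=\theta^{(+)}_{n,k}$); the $s=\frac12$ case is analogous, reducing $m(2p+\tfrac12)$ to $-(2p-\tfrac{m}{2})$. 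Similarly $\theta_{2p-\frac12,1}(\tau,z)=\theta_{-\frac12,1}(\tau,z)$ and $\theta_{-2p+\frac12,1}(\tau,z)=\theta_{\frac12,1}(\tau,z)$ by the period-$2$ symmetry of $\theta_{n,1}$, while $\theta_{2p-\frac12,m+1}$ and $\theta_{-2p+\frac12,m+1}$ are left untouched. Combining $\epsilon(m)$ with the $(\pm1)^a(-1)^{ma}$ arising in Lemma \ref{n3:lemma:2022-827c} and the $(-1)^{(m+1)j}$ inside the double sum of Lemma \ref{n3:lemma:2022-825a} should produce the uniform prefactor $(-1)^p$ and the summand sign $(-1)^{(m+1)j+k}$ displayed in \eqref{n3:eqn:2022-827h1}.

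Finally, the correction sum from Lemma \ref{n3:lemma:2022-827c}, once multiplied through by $\epsilon(m)\,\theta^{(\sigma(m))}_{2p-1-\frac{m}{2},m+1}(\tau,0)$, gives the last line of \eqref{n3:eqn:2022-827h1}. Part 2) follows along identical lines, using \eqref{n3:eqn:2022-827a} of Lemma \ref{n3:lemma:2022-825a} and the $s=\frac12$ specialization of Lemma \ref{n3:lemma:2022-827c}; the extra factor $-i$ in \eqref{n3:eqn:2022-827h2} is inherited from the $-i$ in \eqref{n3:eqn:2022-827a}. The main obstacle is the careful bookkeeping of three independent sign sources — $(\pm1)^a(-1)^{ma}$ from Lemma \ref{n3:lemma:2022-827c}, $\epsilon(m)$ from the theta-index reduction, and $(-1)^{(m+1)j}$ from the double sum in Lemma \ref{n3:lemma:2022-825a} — and verifying that both parities of $m$ collapse to the uniform sign $(-1)^p$ written in the proposition.
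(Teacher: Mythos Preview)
Your strategy is exactly the paper's: specialize Lemma~\ref{n3:lemma:2022-827c} at $a=p$, multiply by the appropriate theta constant, invoke Lemma~\ref{n3:lemma:2022-825a} at parameter $2p$, and then reduce the theta-constant index. The paper records the index reduction as
\[
\theta^{(\sigma(m))}_{1+m(2p+\frac12),\,m+1}(\tau,0)\;=\;(-1)^{p(m+1)}\,\theta^{(\sigma(m))}_{2p-1-\frac{m}{2},\,m+1}(\tau,0),
\]
and after multiplying the identity through by $(-1)^{p(m+1)}$, this combines with the factor $(-1)^{mp}$ coming from Lemma~\ref{n3:lemma:2022-827c} to give the uniform $(-1)^p$.

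However, the two theta identities you invoke are both incorrect, and as written your $\epsilon(m)$ would give the wrong answer for even $m$. First, the periodicity $\theta^{(\pm)}_{n+2(m+1),\,m+1}=\theta^{(\pm)}_{n,\,m+1}$ holds only for the $(+)$ sign; for the $(-)$ sign one has $\theta^{(-)}_{n+2(m+1),\,m+1}=-\,\theta^{(-)}_{n,\,m+1}$, so shifting the index by $2p(m+1)$ contributes $(-1)^p$ when $\sigma(m)=-$. Second, the reflection at $z=0$ is $\theta^{(-)}_{-n,k}(\tau,0)=+\,\theta^{(-)}_{n,k}(\tau,0)$, not $-$: the substitution $n\mapsto -n$ in the defining sum shows $\theta^{(-)}_{-j,k}(\tau,z)=\theta^{(-)}_{j,k}(\tau,-z)$. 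With these corrections the reduction factor is $(-1)^{p(m+1)}$, which is $p$-dependent when $m$ is even rather than the $p$-independent $\epsilon(m)$ you wrote; combining it with $(-1)^{mp}$ from Lemma~\ref{n3:lemma:2022-827c} then produces $(-1)^{mp-p(m+1)}=(-1)^{-p}=(-1)^p$ uniformly in $m$, as required.
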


\begin{proof} Letting $a=p$ in \eqref{n3:eqn:2022-827g}, we have
{\allowdisplaybreaks
\begin{eqnarray}
& & \hspace{-7mm}
\Phi^{[\frac{m}{2},s]}\Big(2\tau, \,\ 
z+\frac{\tau}{2}-\frac12, \,\ 
z-\frac{\tau}{2}+\frac12, \,\ 
\frac{\tau}{8}\Big)
\nonumber
\\[2mm]
&=&
(-1)^{mp} \, q^{-m(p+\frac14)^2} \, 
\Phi^{[\frac{m}{2},s]}\Big(2\tau, \,\ 
z+\frac{\tau}{2}-\frac12+2p\tau, \,\ 
z-\frac{\tau}{2}+\frac12-2p\tau, \,\ 0\Big) 
\nonumber
\\[2mm]
& & \hspace{-5mm}
+ \,\ e^{-\pi is}
\sum_{\substack{k \, \in \zzz \\[1mm] 1 \, \leq \, k \, \leq \, pm}} 
(-1)^k  \, q^{-\frac{1}{m}(k-s+\frac{m}{4})^2} \, 
\big[\theta_{2(k-s), \, m} - \theta_{-2(k-s), \, m}\big] 
(\tau, z)
\label{n3:eqn:2022-827j}
\end{eqnarray}}

\noindent
\underline{Proof of 1)} : \,\ Multiplying \, 
$\theta^{(\sigma(m))}_{1+m(2p+\frac12), m+1}(\tau,0)$ to 
${\rm \eqref{n3:eqn:2022-827j}}_{s=0}$, we have 
{\allowdisplaybreaks
\begin{eqnarray*}
& & \hspace{-10mm}
\underbrace{\theta^{(\sigma(m))}_{1+m(2p+\frac12),m+1}(\tau,0)}_{
\substack{|| \\[0mm] {\displaystyle \hspace{13mm}
(-1)^{p(m+1)} \theta^{(\sigma(m))}_{2p-1-\frac{m}{2}, m+1}(\tau,0)
}}} \hspace{-10.8mm} 
\Phi^{[\frac{m}{2},0]}\Big(2\tau, \,\ 
z+\frac{\tau}{2}-\frac12, \,\ 
z-\frac{\tau}{2}+\frac12, \,\ 
\frac{\tau}{8}\Big)
\\[2mm]
&=&
(-1)^{mp} 
\underbrace{q^{-m(p+\frac14)^2} 
\theta^{(\sigma(m))}_{1+m(2p+\frac12),m+1}(\tau,0) 
\Phi^{[\frac{m}{2},0]}\Big(2\tau, 
z+\frac{\tau}{2}-\frac12+2p\tau, 
z-\frac{\tau}{2}+\frac12-2p\tau, 0\Big)}_{\rm (I)}
\\[1mm]
& & \hspace{-5mm}
+ \,\ 
\theta^{(\sigma(m))}_{1+m(2p+\frac12),m+1}(\tau,0)
\sum_{\substack{k \, \in \zzz \\[1mm] 1 \, \leq \, k \, \leq \, pm}} 
(-1)^k  \, q^{-\frac{1}{m}(k+\frac{m}{4})^2} \, 
\big[\theta_{2k, \, m} - \theta_{-2k, \, m}\big] (\tau, z)
\end{eqnarray*}}
Here (I) is obtained by ${\rm \eqref{n3:eqn:2022-825a}}_{p \rightarrow 2p}$, 
so we have 
{\allowdisplaybreaks
\begin{eqnarray*}
& & 
(-1)^{p(m+1)} \theta^{(\sigma(m))}_{2p-1-\frac{m}{2}, m+1}(\tau,0)
\Phi^{[\frac{m}{2},0]}\Big(2\tau, \,\ 
z+\frac{\tau}{2}-\frac12, \,\ 
z-\frac{\tau}{2}+\frac12, \,\ 
\frac{\tau}{8}\Big)
\\[2mm]
&=&
(-1)^{mp} \, 
\bigg[\sum_{\substack{k \in \zzz \\[1mm] 0 \, < \, k \, \leq \, mj}}
-
\sum_{\substack{k \in \zzz \\[1mm] mj \, < \, k \, \leq \, 0}} \bigg] 
(-1)^{(m+1)j+k} \, 
q^{(m+1)(j+\frac{1}{2(m+1)}+\frac{m(4p+1)}{4(m+1)})^2
\, - \, \frac{1}{m}(k+\frac{m(4p+1)}{4})^2} 
\nonumber
\\[-2mm]
& & \hspace{60mm}
\times \,\ 
\big[\theta_{2k,m}-\theta_{-2k,m}\big](\tau, z)
\nonumber
\\[2mm]
& & \hspace{-2mm}
+ \,\ (-1)^{mp} \, \eta(2\tau)^3 \, \bigg\{
\frac{\theta_{2p-\frac12, m+1}(\tau,z)}{\theta_{2p-\frac12, 1}(\tau,z)}
\, - \, 
\frac{\theta_{-2p+\frac12, m+1}(\tau,z)}{\theta_{-2p+\frac12, 1}(\tau,z)}
\bigg\}
\\[2mm]
& & \hspace{-2mm}
+ \,\ 
(-1)^{p(m+1)} \theta^{(\sigma(m))}_{2p-1-\frac{m}{2}, m+1}(\tau,0)
\sum_{\substack{k \, \in \zzz \\[1mm] 1 \, \leq \, k \, \leq \, pm}} 
(-1)^k  \, q^{-\frac{1}{m}(k+\frac{m}{4})^2} \, 
\big[\theta_{2k, \, m} - \theta_{-2k, \, m}\big] (\tau, z)
\end{eqnarray*}}
Multiplying \, $(-1)^{p(m+1)}$ to this equation we obtain 
the formula \eqref{n3:eqn:2022-827h1}, proving 1).

\medskip
\noindent
\underline{Proof of 2)} : \,\ Multiplying \, 
$\theta^{(\sigma(m))}_{m(2p+\frac12), m+1}(\tau,0)$ to 
${\rm \eqref{n3:eqn:2022-827j}}_{s=\frac12}$, we have 
{\allowdisplaybreaks
\begin{eqnarray*}
& & \hspace{-7mm} \hspace{-10mm}
\underbrace{\theta^{(\sigma(m))}_{m(2p+\frac12),m+1}(\tau,0)}_{
\substack{|| \\[0mm] {\displaystyle \hspace{13mm}
(-1)^{p(m+1)}
\theta^{(\sigma(m))}_{2p-\frac{m}{2}, m+1}(\tau,0)
}}} \hspace{-10.8mm} 
\Phi^{[\frac{m}{2},\frac12]}\Big(2\tau, \,\ 
z+\frac{\tau}{2}-\frac12, \,\ 
z-\frac{\tau}{2}+\frac12, \,\ 
\frac{\tau}{8}\Big)
\\[2mm]
&=&
(-1)^{mp} 
\underbrace{q^{-m(p+\frac14)^2} 
\theta^{(\sigma(m))}_{m(2p+\frac12),m+1}(\tau,0) 
\Phi^{[\frac{m}{2},\frac12]}\Big(2\tau, 
z+\frac{\tau}{2}-\frac12+2p\tau, 
z-\frac{\tau}{2}+\frac12-2p\tau, 0\Big)}_{\rm (II)} 
\\[2mm]
& & \hspace{-5mm}
+ \,\ e^{-\frac{\pi i}{2}} 
\theta^{(\sigma(m))}_{m(2p+\frac12),m+1}(\tau,0)
\sum_{\substack{k \, \in \zzz \\[1mm] 1 \, \leq \, k \, \leq \, pm}} 
(-1)^k  \, q^{-\frac{1}{m}(k-\frac12+\frac{m}{4})^2} \, 
\big[\theta_{2(k-\frac12), \, m} - \theta_{-2(k-\frac12), \, m}\big] 
(\tau, z)
\end{eqnarray*}}
Here (II) is obtained by ${\rm \eqref{n3:eqn:2022-827a}}_{p \rightarrow 2p}$, 
so we have 
{\allowdisplaybreaks
\begin{eqnarray*}
& &
(-1)^{p(m+1)}
\theta^{(\sigma(m))}_{2p-\frac{m}{2}, m+1}(\tau,0) \, 
\Phi^{[\frac{m}{2},\frac12]}\Big(2\tau, \,\ 
z+\frac{\tau}{2}-\frac12, \,\ 
z-\frac{\tau}{2}+\frac12, \,\ 
\frac{\tau}{8}\Big)
\\[2mm]
&=&
(-1)^{mp} \, \bigg[
\sum_{\substack{(j, k) \, \in \, \zzz \times \frac12 \zzz_{\rm odd} \\[1mm]
0 \, < \, k \, < \, mj}}
-
\sum_{\substack{(j, k) \, \in \, \zzz \times \frac12 \zzz_{\rm odd} \\[1mm]
mj \, < \, k \, < \, 0}}
\bigg] \, 
(-1)^{(m+1)j+k} \, q^{(m+1)(j+\frac{m(4p+1)}{4(m+1)})^2
\, - \, \frac{1}{m}(k+\frac{m(4p+1)}{4})^2} 
\nonumber
\\[-6mm]
& & \hspace{75mm}
\times \,\ \big[\theta_{2k,m}-\theta_{-2k,m}\big](\tau, z)
\nonumber
\\[3.5mm]
& & \hspace{-2mm}
- \,\ (-1)^{mp} \, i \,\ \eta(2\tau)^3 \, \bigg\{
\frac{\theta_{2p+\frac12, m+1}(\tau,z)}{\theta_{2p-\frac12, 1}(\tau,z)}
\, - \, 
\frac{\theta_{-2p-\frac12, m+1}(\tau,z)}{\theta_{-2p+\frac12, 1}(\tau,z)}
\bigg\}
\\[3mm]
& & \hspace{-2mm}
- \, i \, (-1)^{p(m+1)} \theta^{(\sigma(m))}_{2p-\frac{m}{2}, m+1}(\tau,0)
\sum_{\substack{k \in \zzz \\[1mm] 1 \, \leq \, k \, \leq \, pm}} 
(-1)^k q^{-\frac{1}{m}(k-\frac12+\frac{m}{4})^2} \, 
\big[\theta_{2k-1, \, m} - \theta_{-(2k-1), \, m}\big] (\tau, z)
\end{eqnarray*}}
Multiplying \, $(-1)^{p(m+1)}$ to this equation we obtain 
the formula \eqref{n3:eqn:2022-827h2}, proving 2).
\end{proof}

\section{The space of N=3 numerators}
\label{sec:N=3:numerator}

\subsection{The space $\Theta^{[m]}$}
\label{subsec:Theta(m)}

\medskip

We put 
$$
\begin{array}{lcl}
\ccc[[q^{\frac12}]] &:=& \ccc\text{-linear span of} \,\ 
\bigg\{q^b\sum\limits_{j=0}^{\infty}a_j(q^{\frac12})^j \,\ ; \,\ 
b \in \rrr, \,\ a_j \in \ccc\bigg\}
\\[5mm]
\ccc((q^{\frac12})) &:=& \bigg\{ \dfrac{f}{g} \,\ ; \,\ 
f, \, g \, \in \, \ccc[[q^{\frac12}]], \,\ g \ne 0 \bigg\}
\end{array}
$$
and consider the following spaces for $m \, \in \, \nnn$:
$$
\begin{array}{lcl}
\Theta^{[m]} &:=& \ccc((q^{\frac12}))\text{-linear span of} \,\ 
\big\{[\theta_{j,m}-\theta_{-j,m}](\tau,z)\big\}_{j \in \zzz}
\\[2mm]
\Theta^{[m,0]} &:=& \ccc((q^{\frac12}))\text{-linear span of} \,\ 
\big\{[\theta_{j,m}-\theta_{-j,m}](\tau,z)\big\}_{j \in \zzz_{\rm even}}
\\[2mm]
\Theta^{[m, \frac12]} &:=& \ccc((q^{\frac12}))\text{-linear span of} \,\ 
\big\{[\theta_{j,m}-\theta_{-j,m}](\tau,z)\big\}_{j \in \zzz_{\rm odd}}
\end{array}
$$

\vspace{1mm}

\begin{note} 
\label{n3:note:2022-827a}
For $m \in \nnn$ the following hold:
\begin{enumerate}
\item[{\rm 1)}] \quad $f \in \Theta^{[m]} 
\quad \Longrightarrow \quad 
\theta_{0,1} f, \,\ \theta_{1,1} f \, \in \, \Theta^{[m+1]}$
\item[{\rm 2)}] \quad $\left\{
\begin{array}{lcl}
\theta_{0,1} \cdot \Theta^{[m,0]} &\subset& \Theta^{[m+1,0]} \\[1mm]
\theta_{0,1} \cdot \Theta^{[m,\frac12]} &\subset& \Theta^{[m+1,\frac12]}
\end{array} \right. \hspace{5mm} \left\{
\begin{array}{lcl}
\theta_{1,1} \cdot \Theta^{[m,0]} &\subset& \Theta^{[m+1,\frac12]} \\[1mm]
\theta_{1,1} \cdot \Theta^{[m,\frac12]} &\subset& \Theta^{[m+1,0]}
\end{array} \right. $
\end{enumerate}
\end{note}

\begin{proof} These claims can be checked easily by calculation using 
the formula (2.2a) in \cite{W2022d}.
\end{proof}

The following Note \ref{n3:note:2022-827b} is also obtained by easy calculation.

\medskip

\begin{note} \,\ 
\label{n3:note:2022-827b}
\begin{enumerate}
\item[{\rm 1)}]
\begin{enumerate}
\item[{\rm (i)}] \quad $\theta_{0,1}(\tau, \, \tau+\frac12) \, = \, 0$
\item[{\rm (ii)}] \quad $\theta_{1,1}(\tau, \, \tau+\frac12) \, = \, 
- i \, q^{-\frac14} \, \sum\limits_{j \in \zzz} \, (-1)^jq^{j^2}$
\end{enumerate}
\item[{\rm 2)}]
\begin{enumerate}
\item[{\rm (i)}] \quad $\vartheta_{11}(\tau, \, \tau+\frac12) 
\, = \, i \, \big(\theta_{1,2}-\theta_{-1,2}\big)
\,\ \in \,\ \Theta^{[2]}$
\item[{\rm (ii)}] \quad $\vartheta_{11}(\tau, \, \tau+\frac12) 
\,\ = \,\ 
- \, 2 \, q^{-\frac38} \, \prod\limits_{n=1}^{\infty}(1-q^n)(1+q^n)^2$
\end{enumerate}
\end{enumerate}
\end{note}

\vspace{1mm}

By these Notes \ref{n3:note:2022-827a} and \ref{n3:note:2022-827b},
we can prove the following:

\begin{lemma} 
\label{n3:lemma:2022-828c}
Let $m \in \nnn_{\geq 2}$, then  
\begin{enumerate}
\item[{\rm 1)}] \quad $\Theta^{[m+1]} \,\ = \,\
\theta_{0,1}\Theta^{[m]} \, + \, \theta_{1,1}\Theta^{[m]}$
\item[{\rm 2)}]
\begin{enumerate}
\item[{\rm (i)}] \quad $\Theta^{[m+1, 0]} \,\ = \,\
\theta_{0,1}\Theta^{[m,0]} \, + \, \theta_{1,1}\Theta^{[m, \frac12]}$
\item[{\rm (ii)}] \quad $\Theta^{[m+1, \frac12]} \,\ = \,\
\theta_{0,1}\Theta^{[m, \frac12]} \, + \, \theta_{1,1}\Theta^{[m,0]}$
\end{enumerate}
\end{enumerate}
\end{lemma}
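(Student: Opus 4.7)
The $\supseteq$ direction in each of 1), 2)(i), and 2)(ii) is furnished directly by Note~\ref{n3:note:2022-827a}. The content of the lemma is the reverse inclusion: every element of $\Theta^{[m+1]}$ (respectively of the even/odd pieces $\Theta^{[m+1,0]}$, $\Theta^{[m+1,\frac12]}$) must be exhibited as $\theta_{0,1}\alpha + \theta_{1,1}\beta$ with $\alpha,\beta \in \Theta^{[m]}$ (respectively lying in the appropriate even/odd subspaces).

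My plan is to invoke the product-to-sum identity (2.2a) of \cite{W2022d} applied to $\theta_{r,1}\cdot\theta_{k,m}$ with $r\in\{0,1\}$ and $k\in\zzz$, which yields an expansion of the form
$$
\theta_{r,1}(\tau,z)\,\theta_{k,m}(\tau,z) \,=\, \sum_{\ell \bmod (m+1)} C^{(r,k)}_\ell(\tau)\,\theta_{r+k+2\ell,\,m+1}(\tau,z),
$$
with each coefficient $C^{(r,k)}_\ell(\tau)$ a theta null of level $m(m+1)$, hence an element of $\ccc((q^{1/2}))$. Antisymmetrising in $k\mapsto -k$ for $1\le k\le m-1$ and $r\in\{0,1\}$ produces $2(m-1)$ explicit elements of $\theta_{0,1}\Theta^{[m]}+\theta_{1,1}\Theta^{[m]}$, each written as a $\ccc((q^{1/2}))$-combination of the $m$ basis vectors $[\theta_{j,m+1}-\theta_{-j,m+1}]$, $1\le j\le m$, of $\Theta^{[m+1]}$. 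What remains is to show the resulting $2(m-1)\times m$ coefficient matrix over $\ccc((q^{1/2}))$ has full column rank $m$, equivalently that these images span $\Theta^{[m+1]}$.

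This rank step is the main obstacle. I would prove it by a specialisation argument using Note~\ref{n3:note:2022-827b}: since $\theta_{0,1}(\tau,\tau+\tfrac12)=0$ while $\theta_{1,1}(\tau,\tau+\tfrac12)$ is a nonzero series in $q$, any putative relation $\theta_{0,1}\alpha+\theta_{1,1}\beta=0$ forces $\beta|_{z=\tau+\frac12}=0$; combined with the $z$-linear independence of the generators $\theta_{j,m+1}$, this cuts down the kernel and yields surjectivity. Alternatively, one can run an induction on $m$ with base $m=2$, handled by a direct $2\times 2$ check in which Note~\ref{n3:note:2022-827b}(2) identifies $\vartheta_{11}(\tau,\tau+\tfrac12)\in\Theta^{[2]}$ explicitly and produces the second required generator of $\Theta^{[3]}$.

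Part 2) then follows from 1) by the parity observation already present in Note~\ref{n3:note:2022-827a}: in the above product formula $\theta_{0,1}\cdot\theta_{k,m}$ contributes only to level-$(m+1)$ theta indices of the same $\zzz/2\zzz$-parity as $k$, while $\theta_{1,1}\cdot\theta_{k,m}$ contributes only to the opposite parity. Hence the decomposition $\Theta^{[m]}=\Theta^{[m,0]}\oplus\Theta^{[m,\frac12]}$ is compatible with the surjection constructed in 1), so the latter restricts cleanly to surjections onto $\Theta^{[m+1,0]}$ and $\Theta^{[m+1,\frac12]}$, yielding 2)(i) and 2)(ii) with no extra work.
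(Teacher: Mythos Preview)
Your outline identifies the right ingredients (Note~\ref{n3:note:2022-827a} for one inclusion, the evaluation at $z=\tau+\tfrac12$ from Note~\ref{n3:note:2022-827b} for the other), but the ``rank step'' you flag as the main obstacle is left as a plan rather than an argument. Verifying that a $2(m-1)\times m$ coefficient matrix of theta nulls has full column rank is awkward, and the specialisation $\theta_{0,1}\alpha+\theta_{1,1}\beta=0 \Rightarrow \beta|_{z=\tau+\frac12}=0$ that you propose speaks to the kernel of the map $\Theta^{[m]}\oplus\Theta^{[m]}\to\Theta^{[m+1]}$, not to its image; it does not, as stated, yield surjectivity.

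The paper bypasses the matrix entirely with a codimension-one dimension count. Over $\ccc((q^{1/2}))$ one has $\dim\Theta^{[m]}=m-1$ and $\dim\Theta^{[m+1]}=m$; since multiplication by the nonzero function $\theta_{0,1}$ is injective, $\theta_{0,1}\cdot\Theta^{[m]}$ is already an $(m{-}1)$-dimensional subspace of $\Theta^{[m+1]}$. So it suffices to exhibit \emph{one} element of $\theta_{1,1}\cdot\Theta^{[m]}$ lying outside $\theta_{0,1}\cdot\Theta^{[m]}$. The paper takes $(\theta_{1,1})^{m-1}\vartheta_{11}$: it lies in $\theta_{1,1}\cdot\Theta^{[m]}$ because $\vartheta_{11}=i(\theta_{1,2}-\theta_{-1,2})\in\Theta^{[2]}$ and repeated use of Note~\ref{n3:note:2022-827a} gives $(\theta_{1,1})^{m-2}\vartheta_{11}\in\Theta^{[m]}$ (here $m\ge 2$ is used); and by Note~\ref{n3:note:2022-827b} it does \emph{not} vanish at $z=\tau+\tfrac12$, whereas every element of $\theta_{0,1}\cdot\Theta^{[m]}$ does. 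Hence
\[
\Theta^{[m+1]} \;=\; \theta_{0,1}\cdot\Theta^{[m]} \,\oplus\, \ccc((q^{1/2}))\cdot(\theta_{1,1})^{m-1}\vartheta_{11}
\;\subset\; \theta_{0,1}\cdot\Theta^{[m]} + \theta_{1,1}\cdot\Theta^{[m]},
\]
which is 1). Your derivation of 2) from 1) via the parity content of Note~\ref{n3:note:2022-827a} matches the paper. The moral: rather than proving a rank statement for the whole generating set, use the dimension gap of exactly one and spend the specialisation at $z=\tau+\tfrac12$ on a single well-chosen vector.
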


\begin{proof} 1) Consider the $\ccc((q^{\frac12}))$-linear subspace of 
$\Theta^{[m+1]}$ spanned by $\theta_{0,1}\Theta^{[m]}$ and 
$(\theta_{1,1})^{m-1}\vartheta_{11}$. We note the following:

\begin{enumerate}
\item[(a)] \,\ $
{\rm dim}_{\ccc((q^{\frac12}))}(\theta_{0,1} \cdot \Theta^{[m]})
\, = \, 
{\rm dim}_{\ccc((q^{\frac12}))}\Theta^{[m]} \, = \, m-1$ 
\quad and \quad 
${\rm dim}_{\ccc((q^{\frac12}))}\Theta^{[m+1]} \, = \, m$ 

\item[(b)] \,\ $f \, \in \, \theta_{0,1} \cdot \Theta^{[m]}
\quad \Longrightarrow \quad f(\tau, \tau+\frac12) \, = \, 0$

\item[(c)] \,\ $f \, = \, (\theta_{1,1})^{m-1}\vartheta_{11}
\quad \Longrightarrow \quad f(\tau, \tau+\frac12) \, \ne \, 0$
\end{enumerate}

\noindent
Then we have
$$
\Theta^{[m+1]} \, = \, \theta_{0,1} \cdot \Theta^{[m]}
\, \oplus \, 
\ccc((q^{\frac12})) \cdot (\theta_{1,1})^{m-1}\vartheta_{11}
\,\ \subset \,\ 
\theta_{0,1} \cdot \Theta^{[m]} + \theta_{1,1} \cdot \Theta^{[m]}
$$
proving 1). \, 2) follows immediately from 1) and 
Note \ref{n3:note:2022-827a}.
\end{proof}

\subsection{The spaces $V^{[m,0]}$ and $V^{[m,\frac12]}$}
\label{subsec:V(ms):U(ms)}

\medskip

For $m \in \nnn$ and $s \in \big\{0, \, \frac12 \big\}$, we define 
the spaces $V^{[m,s]}$ and $U^{[m,s]}$ as follows:
{\allowdisplaybreaks
\begin{eqnarray*}
& & \hspace{-7mm}
V^{[m,0]} 
\overset{\substack{def \\[1mm] }}{:=} 
\ccc((q^{\frac12}))\text{-linear span of} \,\ \bigg\{
\Phi^{[\frac{m}{2},s]}\Big(
2\tau, z+\frac{\tau}{2}-\frac12, z-\frac{\tau}{2}+\frac12, 
\frac{\tau}{8}\Big) \,\ ; \,\ 
\begin{array}{l}
s \, \in \, \zzz \\[0mm]
1 \leq s \leq \frac{m+1}{2}
\end{array}
\bigg\}
\\[3mm]
& & \hspace{-7mm}
V^{[m,\frac12]} 
\overset{\substack{def \\[1mm] }}{:=} 
\ccc((q^{\frac12}))\text{-linear span of} \,\ \bigg\{
\Phi^{[\frac{m}{2},s]}\Big(
2\tau, z+\frac{\tau}{2}-\frac12, z-\frac{\tau}{2}+\frac12, 
\frac{\tau}{8}\Big) \,\ ; \,\ 
\begin{array}{l}
s \, \in \, \frac12 \, \zzz_{\rm odd} \\[1mm]
\frac12 \leq s \leq \frac{m+1}{2}
\end{array}
\bigg\}
\\[3mm]
& & \hspace{-7mm}
U^{[m,0]} 
\overset{\substack{def \\[1mm] }}{:=} 
\ccc((q^{\frac12}))\text{-linear span of} \,\ 
\Bigg\{
\frac{\theta_{-\frac12, m+1}(\tau,z)}{\theta_{-\frac12,1}(\tau,z)}
-
\frac{\theta_{\frac12, m+1}(\tau,z)}{\theta_{\frac12,1}(\tau,z)}, 
\quad 
\underset{ \Big(\substack{\\[1mm]
k \, \in \, \nnn_{\rm even} \\[1mm]
1 \, \leq \, k \, \leq \, m-1
} \Big) }{\big[\theta_{k,m}-\theta_{-k,m}\big](\tau,z)}
\Bigg\}
\\[3mm]
& & \hspace{-7mm}
U^{[m,\frac12]} 
\overset{\substack{def \\[1mm] }}{:=} 
\ccc((q^{\frac12}))\text{-linear span of} \,\ 
\Bigg\{
\frac{\theta_{\frac12, m+1}(\tau,z)}{\theta_{-\frac12,1}(\tau,z)}
-
\frac{\theta_{-\frac12, m+1}(\tau,z)}{\theta_{\frac12,1}(\tau,z)}, 
\quad 
\underset{ \Big(\substack{\\[1mm]
k \, \in \, \nnn_{\rm odd} \\[1mm]
1 \, \leq \, k \, \leq \, m-1
} \Big) }{\big[\theta_{k,m}-\theta_{-k,m}\big](\tau,z)}
\Bigg\}
\end{eqnarray*}}
Note that 
$$ \hspace{-10mm}
\begin{array}{lcl}
{\rm dim}_{\ccc((q^{\frac12}))} \, V^{[m,0]} 
&=&
\left\{
\begin{array}{ccl}
\frac{m}{2} & & {\rm if} \,\ m \, \in \, \nnn_{\rm even} \\[2mm]
\frac{m+1}{2} & & {\rm if} \,\ m \, \in \, \nnn_{\rm odd}
\end{array} \right. 
\\[5mm]
{\rm dim}_{\ccc((q^{\frac12}))} \, V^{[m,\frac12]} 
&=& \left\{
\begin{array}{ccl}
\frac{m}{2}+1 & & {\rm if} \,\ m \, \in \, \nnn_{\rm even} \\[2mm]
\frac{m+1}{2} & & {\rm if} \,\ m \, \in \, \nnn_{\rm odd}
\end{array} \right.
\end{array}
$$

\vspace{1mm}

\begin{lemma} \quad 
\label{n3:lemma:2022-828a}
$V^{[m,s]} \,\ = \,\ U^{[m,s]}$ \quad for \quad 
$m \in \nnn$ \, and \, $s \in \{0, \, \frac12\}$.
\end{lemma}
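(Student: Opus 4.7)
My plan is to prove the equality by establishing $V^{[m,s]}\subseteq U^{[m,s]}$ and then comparing $\ccc((q^{\frac12}))$-dimensions. The tabulated dimensions of $V^{[m,s]}$ are matched by a parallel count of the generators of $U^{[m,s]}$: the theta differences $[\theta_{k,m}-\theta_{-k,m}]$ of the stipulated parity are $\ccc((q^{\frac12}))$-linearly independent since their leading $q$-exponents are distinct, and the theta-quotient generator is independent of them because it has poles in $z$ while the differences are holomorphic entire in $z$.

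For the inclusion, I carry out a three-step reduction of each generator $\Phi^{[\frac{m}{2},s']}(2\tau, z+\frac{\tau}{2}-\frac12, z-\frac{\tau}{2}+\frac12, \frac{\tau}{8})$ of $V^{[m,s]}$. First, I apply Lemma~2.8 of \cite{W2022a} to rewrite $\Phi^{[\frac{m}{2},s']}$ for general $s'$ in the spanning range as a $\ccc((q^{\frac12}))$-combination of $\Phi^{[\frac{m}{2},0]}$ (when $s=0$) or $\Phi^{[\frac{m}{2},\frac12]}$ (when $s=\frac12$) evaluated at arguments shifted by $\tau$-multiples keyed to a parameter $p=p(s')\in\zzz_{\ge 0}$, modulo corrections in the theta-difference subspace of parity $s$. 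Second, I apply Lemma~\ref{n3:lemma:2022-827c} with $a=p$ to pass from $t=\frac{\tau}{8}$ to $t=0$ at the symmetric shifts $z\pm\frac{\tau}{2}\mp\frac12\pm 2p\tau$, again modulo explicit theta-difference corrections. Third, I invoke Proposition~\ref{n3:prop:2022-827a} at this $p$, which---after dividing through by the theta constant $\theta^{(\sigma(m))}_{2p-1-\frac{m}{2},m+1}(\tau,0)$ or its $s=\frac12$ analog (both units in $\ccc((q^{\frac12}))$ since their leading $q$-term is nonzero)---writes the resulting $\Phi^{[\frac{m}{2},0]}$ (or $\Phi^{[\frac{m}{2},\frac12]}$) as a $\ccc((q^{\frac12}))$-combination of a theta quotient and a sum of $[\theta_{k,m}-\theta_{-k,m}]$'s.

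The main obstacle is that for $p\ge 1$ the theta quotient produced in Step~3 has numerators $\theta_{\pm(2p-\frac12),m+1}$ (or $\theta_{\pm(2p+\frac12),m+1}$ when $s=\frac12$) rather than the $\theta_{\pm\frac12,m+1}$ that appear in the theta-quotient generator of $U^{[m,s]}$, so it does not literally coincide with that generator. I intend to handle this by observing that the left-hand side of Proposition~\ref{n3:prop:2022-827a} is $p$-independent: comparing the $p$-th and $p=0$ formulas forces the higher-$p$ quotient to equal a $\ccc((q^{\frac12}))$-multiple of the $p=0$ quotient plus a $\ccc((q^{\frac12}))$-combination of $[\theta_{k,m}-\theta_{-k,m}]$'s, so it lies in $U^{[m,s]}$. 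I will also need to check that the theta-difference corrections introduced in Steps~1 and 2 have the correct parity, i.e., lie in $\Theta^{[m,s]}$ rather than in the full $\Theta^{[m]}$; this follows from tracking the parity of the indices $2(k-s)$ appearing in Lemma~\ref{n3:lemma:2022-827c} together with Lemma~\ref{n3:lemma:2022-828c}. Granting these two points, the three-step reduction places each generator of $V^{[m,s]}$ into $U^{[m,s]}$, and the dimension match completes the proof.
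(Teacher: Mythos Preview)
Your overall strategy is sound and the key tools are correctly identified, but your route differs from the paper's in the second half, and your three-step reduction for $V^{[m,s]}\subseteq U^{[m,s]}$ is more elaborate than needed.

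\medskip

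\textbf{On the inclusion $V\subseteq U$.} The paper dispatches this in one line (``obvious by Proposition~\ref{n3:prop:2022-827a}''). The point is that Lemma~2.8 of \cite{W2022a} reduces $\Phi^{[\frac{m}{2},s']}$ for integer (resp.\ half-odd) $s'$ to $\Phi^{[\frac{m}{2},0]}$ (resp.\ $\Phi^{[\frac{m}{2},\frac12]}$) at the \emph{same} arguments, modulo theta-difference corrections of the right parity; then Proposition~\ref{n3:prop:2022-827a} at $p=0$ immediately places the result in $U^{[m,s]}$. Your Step~2 (Lemma~\ref{n3:lemma:2022-827c}) is redundant, since Proposition~\ref{n3:prop:2022-827a} is already stated at the unshifted arguments with $t=\tau/8$; and the ``obstacle'' you flag about the $p\ge 1$ theta quotients never arises if you stay at $p=0$. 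Your resolution of that obstacle (comparing the $p$-th formula to the $p=0$ formula) is correct and is in fact exactly the content of Note~\ref{n3:note:2022-828a}, but it is not needed for this lemma.

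\medskip

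\textbf{On the reverse direction.} Here you genuinely diverge from the paper. The paper proves $U^{[m,s]}\subset V^{[m,s]}$ directly: it cites Note~5.1 of \cite{W2022d} for $\Theta^{[m,s]}\subset V^{[m,s]}$, and then reads off from Proposition~\ref{n3:prop:2022-827a} (again at $p=0$) that the theta-quotient generator lies in $\ccc((q^{\frac12}))\,\Phi^{[\frac{m}{2},s]}(\ldots)+\Theta^{[m,s]}\subset V^{[m,s]}$. You instead argue by dimension: having shown $V\subseteq U$, you verify that the listed generators of $U^{[m,s]}$ are $\ccc((q^{\frac12}))$-linearly independent (distinct leading $q$-exponents for the theta differences; poles in $z$ for the quotient) and that their count matches the tabulated $\dim V^{[m,s]}$. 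This is a legitimate alternative, and your independence argument for the $U$-generators is clean. The only caveat is that it relies on the tabulated values of $\dim_{\ccc((q^{\frac12}))}V^{[m,s]}$ being already established (the paper states them just before the lemma without proof); the paper's direct argument does not need this input, and in fact recovers those dimensions as a consequence.
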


\begin{proof} \,\ $V^{[m,s]} \, \subset \, U^{[m,s]}$ \, 
is obvious by Lemma 2.2 in \cite{W2022b} and 
Proposition \ref{n3:prop:2022-827a}.
The opposite inclusion $U^{[m,s]} \, \subset \, V^{[m,s]}$ \, is 
shown as follows.  First we note that 
$$
\Theta^{[m,0]} \,\ \subset \,\ V^{[m,0]} 
\hspace{5mm} \text{and} \hspace{5mm}
\Theta^{[m,\frac12]} \,\ \subset \,\ V^{[m,\frac12]}
$$
by Note 5.1 in \cite{W2022d}. \, Next, by Proposition \ref{n3:prop:2022-827a},
we have 
{\allowdisplaybreaks
\begin{eqnarray*}
\frac{\theta_{-\frac12, m+1}(\tau,z)}{\theta_{-\frac12,1}(\tau,z)}
- 
\frac{\theta_{\frac12, m+1}(\tau,z)}{\theta_{\frac12,1}(\tau,z)}
&\in & \ccc((q^{\frac12}))
\Phi^{[\frac{m}{2},0]}\Big(2\tau, z+\frac{\tau}{2}-\frac12, 
z-\frac{\tau}{2}+\frac12, \frac{\tau}{8}\Big) + 
\underbrace{\Theta^{[m,0]}}_{\substack{
\rotatebox{-90}{$\subset$} \\[0.5mm] {\displaystyle V^{[m,0]}
}}}
\\[-7mm]
& \subset & V^{[m.0]}
\\[2mm]
\frac{\theta_{\frac12, m+1}(\tau,z)}{\theta_{-\frac12,1}(\tau,z)}
- 
\frac{\theta_{-\frac12, m+1}(\tau,z)}{\theta_{\frac12,1}(\tau,z)}
&\in & \ccc((q^{\frac12}))
\Phi^{[\frac{m}{2},\frac12]}\Big(2\tau, z+\frac{\tau}{2}-\frac12, 
z-\frac{\tau}{2}+\frac12, \frac{\tau}{8}\Big) + 
\underbrace{\Theta^{[m,\frac12]}}_{\substack{
\rotatebox{-90}{$\subset$} \\[0.5mm] {\displaystyle V^{[m,\frac12]}
}}}
\\[-7mm]
& \subset & V^{[m.\frac12]}
\end{eqnarray*}}
These formulas give \, $U^{[m.0]} \subset V^{[m.0]}$ \, and \, 
$U^{[m.\frac12]} \subset V^{[m.\frac12]}$, proving Lemma 
\ref{n3:lemma:2022-828a}.
\end{proof}

\begin{note} 
\label{n3:note:2022-828a}
For $m \in \nnn$ and $p \in \zzz$ the following hold:
\begin{enumerate}
\item[{\rm 1)}] \,\ $
\dfrac{\theta_{2p-\frac12, m+1}(\tau,z)}{\theta_{-\frac12,1}(\tau,z)}
- 
\dfrac{\theta_{-2p+\frac12, m+1}(\tau,z)}{\theta_{\frac12,1}(\tau,z)}
\,\ \in \,\ U^{[m,0]}$
\item[{\rm 2)}] \,\ $
\dfrac{\theta_{2p+\frac12, m+1}(\tau,z)}{\theta_{-\frac12,1}(\tau,z)}
- 
\dfrac{\theta_{-2p-\frac12, m+1}(\tau,z)}{\theta_{\frac12,1}(\tau,z)}
\,\ \in \,\ U^{[m,\frac12]}$
\end{enumerate}
\end{note}

\begin{proof} These claims are clear since, in Proposition 
\ref{n3:prop:2022-827a}, 
the RHS's of the formulas \eqref{n3:eqn:2022-827h1} and 
\eqref{n3:eqn:2022-827h2} multiplied by $\ccc((q^{\frac12}))$ 
are independent of $p \in \zzz_{\geq 0}$ and then 
independent of $p \in \zzz$ due to Lemma 1.2 in \cite{W2022c}.
\end{proof}

\vspace{1mm}

\begin{lemma} \quad 
\label{n3:lemma:2022-828b}
For $m \in \nnn$ the following formulas hold:
\begin{enumerate}
\item[{\rm 1)}]
\begin{enumerate}
\item[{\rm (i)}] \quad $\theta_{0,1}(\tau,z) \, \times \, \Bigg\{
\dfrac{\theta_{-\frac12, m+1}(\tau,z)}{\theta_{-\frac12,1}(\tau,z)}
-
\dfrac{\theta_{\frac12, m+1}(\tau,z)}{\theta_{\frac12,1}(\tau,z)}\Bigg\}
\,\ \in \,\ U^{[m+1, 0]}$
\item[{\rm (ii)}] \quad $\theta_{0,1}(\tau,z) \, \times \, \Bigg\{
\dfrac{\theta_{\frac12, m+1}(\tau,z)}{\theta_{-\frac12,1}(\tau,z)}
-
\dfrac{\theta_{-\frac12, m+1}(\tau,z)}{\theta_{\frac12,1}(\tau,z)}\Bigg\}
\,\ \in \,\ U^{[m+1, \frac12]}$
\end{enumerate}
\item[{\rm 2)}]
\begin{enumerate}
\item[{\rm (i)}] \quad $\theta_{1,1}(\tau,z) \, \times \, \Bigg\{
\dfrac{\theta_{-\frac12, m+1}(\tau,z)}{\theta_{-\frac12,1}(\tau,z)}
-
\dfrac{\theta_{\frac12, m+1}(\tau,z)}{\theta_{\frac12,1}(\tau,z)}\Bigg\}
\,\ \in \,\ U^{[m+1, \frac12]}$
\item[{\rm (ii)}] \quad $\theta_{1,1}(\tau,z) \, \times \, \Bigg\{
\dfrac{\theta_{\frac12, m+1}(\tau,z)}{\theta_{-\frac12,1}(\tau,z)}
-
\dfrac{\theta_{-\frac12, m+1}(\tau,z)}{\theta_{\frac12,1}(\tau,z)}\Bigg\}
\,\ \in \,\ U^{[m+1, 0]}$
\end{enumerate}
\end{enumerate}
\end{lemma}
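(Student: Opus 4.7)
The plan is to reduce each of the four claims to Note~\ref{n3:note:2022-828a} (applied with $m$ replaced by $m+1$) via the classical theta product formula
\begin{equation*}
\theta_{a,M}(\tau,z)\,\theta_{b,N}(\tau,z) \,=\, \sum_{r=0}^{M+N-1} \theta_{aN-bM+2MNr,\,MN(M+N)}(\tau,0)\cdot\theta_{a+b-2Nr,\,M+N}(\tau,z),
\end{equation*}
specialized to $M=1$, $N=m+1$. This expands each of the four products $\theta_{i,1}(\tau,z)\cdot\theta_{\pm\frac12,m+1}(\tau,z)$ for $i\in\{0,1\}$ as a $\ccc[[q^{\frac12}]]$-linear combination of the $\theta_{j,m+2}(\tau,z)$, with coefficients being level-$(m+1)(m+2)$ theta constants at $z=0$.

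For 1)(i), I substitute the two expansions into $\theta_{0,1}\cdot\bigl(\frac{\theta_{-\frac12,m+1}}{\theta_{-\frac12,1}}-\frac{\theta_{\frac12,m+1}}{\theta_{\frac12,1}}\bigr)$ and reindex the second sum by $r\mapsto-r$. The symmetry $\theta_{-j,L}(\tau,0)=\theta_{j,L}(\tau,0)$ with $L=(m+1)(m+2)$ collapses the two halves into a single sum with common coefficient $\theta_{\frac12+2(m+1)r,L}(\tau,0)$, each summand having the form
\begin{equation*}
\theta_{\frac12+2(m+1)r,L}(\tau,0)\cdot\bigg[\frac{\theta_{2p-\frac12,m+2}(\tau,z)}{\theta_{-\frac12,1}(\tau,z)}-\frac{\theta_{-2p+\frac12,m+2}(\tau,z)}{\theta_{\frac12,1}(\tau,z)}\bigg],\quad p=-(m+1)r\in\zzz.
\end{equation*}
The bracket lies in $U^{[m+1,0]}$ by Note~\ref{n3:note:2022-828a}(1), proving 1)(i). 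Case 1)(ii) is identical with $\pm\frac12$ swapped in the numerators, producing brackets of the form in Note~\ref{n3:note:2022-828a}(2), hence in $U^{[m+1,\frac12]}$.

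For 2)(i) and 2)(ii), where $\theta_{0,1}$ is replaced by $\theta_{1,1}$, the product formula now has $a+b\in\{\frac12,\frac32\}$ rather than $\{-\frac12,\frac12\}$, so the appropriate reindexing that matches the two halves is $r\mapsto-r-1$ instead of $r\mapsto-r$. Using periodicity of the level-$(m+2)$ theta function modulo $2(m+2)$ together with the same evenness $\theta_{-j,L}(\tau,0)=\theta_{j,L}(\tau,0)$, the two sums again collapse into a common-coefficient form, and the resulting brackets again have the shape in Note~\ref{n3:note:2022-828a}, but now contributing to $U^{[m+1,\frac12]}$ in 2)(i) and $U^{[m+1,0]}$ in 2)(ii). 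The parity swap $s\leftrightarrow\frac12-s$ forced by replacing $\theta_{0,1}$ with $\theta_{1,1}$ is precisely encoded in this change of reindexing.

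The main obstacle is bookkeeping: identifying the correct reindexing ($r\mapsto-r$ or $r\mapsto-r-1$) for each of the four cases so that the coefficients of the two sums agree and the resulting brackets land exactly in the template of Note~\ref{n3:note:2022-828a}. Once this alignment is established, the four conclusions follow directly from that Note applied at level $m+1$.
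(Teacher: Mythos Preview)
Your proposal is correct and follows essentially the same route as the paper's own proof: expand the products $\theta_{i,1}(\tau,z)\,\theta_{\pm\frac12,m+1}(\tau,z)$ via the theta product identity (formula (2.2a) in \cite{W2022d}), combine the two sums so that the coefficients agree, and recognize each resulting bracket as an instance of Note~\ref{n3:note:2022-828a} at level $m+1$. The only cosmetic difference is that the paper writes the product formula with the $z$-theta stepping by $2r$ rather than $2(m+1)r$, so in cases 1)(i)--(ii) the coefficients already match without reindexing, and in cases 2)(i)--(ii) the paper reindexes by $r\mapsto r-1$ instead of your $r\mapsto -r-1$; either choice yields the same conclusion.
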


\begin{proof} By the formula (2.2a) in \cite{W2022d}, we have
{\allowdisplaybreaks
\begin{eqnarray*}
& &
\left\{
\begin{array}{l}
\theta_{0,1}(\tau, z) \theta_{\frac12,m+1}(\tau,z) 
= \hspace{-2mm}
\sum\limits_{\substack{\\[1mm] r \in \zzz/(m+2)\zzz}} \hspace{-5mm}
\theta_{\frac12+2r,m+2}(\tau,z) \, 
\theta_{\frac12-2(m+1)r, (m+1)(m+2)}(\tau,0)
\\[6mm]
\theta_{0,1}(\tau, z) \theta_{-\frac12,m+1}(\tau,z) 
= \hspace{-2mm}
\sum\limits_{\substack{\\[1mm] r \in \zzz/(m+2)\zzz}} \hspace{-5mm}
\theta_{-\frac12-2r,m+2}(\tau,z) \, 
\theta_{\frac12-2(m+1)r, (m+1)(m+2)}(\tau,0)
\end{array}\right.
\\[1mm]
& &
\left\{
\begin{array}{l}
\theta_{1,1}(\tau, z) \theta_{\frac12,m+1}(\tau,z) 
= \hspace{-2mm}
\sum\limits_{\substack{\\[1mm] r \in \zzz/(m+2)\zzz}} \hspace{-5mm}
\theta_{\frac12+2r+1,m+2}(\tau,z) \, 
\theta_{\frac12-(2r+1)(m+1), (m+1)(m+2)}(\tau,0)
\\[6mm]
\theta_{1,1}(\tau, z) \theta_{-\frac12,m+1}(\tau,z) 
= \hspace{-3mm}
\sum\limits_{\substack{\\[1mm] r \in \zzz/(m+2)\zzz}} \hspace{-5mm}
\theta_{-\frac12-2r-1,m+2}(\tau,z) \, 
\theta_{\frac12-(2r+1)(m+1), (m+1)(m+2)}(\tau,0)
\end{array}\right.
\end{eqnarray*}}
Using these equations and Note \ref{n3:note:2022-828a}, we have 

\medskip

\noindent
1) (i) \quad $\theta_{0,1}(\tau,z) \, \Bigg\{
\dfrac{\theta_{-\frac12, m+1}(\tau,z)}{\theta_{-\frac12,1}(\tau,z)}
-
\dfrac{\theta_{\frac12, m+1}(\tau,z)}{\theta_{\frac12,1}(\tau,z)}\Bigg\}$

$$
= \hspace{-5mm}
\sum_{r \in \zzz/(m+2)\zzz}\Bigg\{
\frac{\theta_{-\frac12-2r,m+2}(\tau,z)}{\theta_{-\frac12,1}(\tau,z)} 
-
\frac{\theta_{\frac12+2r,m+2}(\tau,z)}{\theta_{\frac12,1}(\tau,z)} 
\Bigg\} \, \theta_{\frac12-2(m+1)r, (m+1)(m+2)}(\tau,0)
\in 
U^{[m+1,0]}
$$

(ii) \quad $\theta_{0,1}(\tau,z) \, \Bigg\{
\dfrac{\theta_{\frac12, m+1}(\tau,z)}{\theta_{-\frac12,1}(\tau,z)}
-
\dfrac{\theta_{-\frac12, m+1}(\tau,z)}{\theta_{\frac12,1}(\tau,z)}\Bigg\}$

$$
= \hspace{-5mm}
\sum_{r \in \zzz/(m+2)\zzz}\Bigg\{
\frac{\theta_{\frac12+2r,m+2}(\tau,z)}{\theta_{-\frac12,1}(\tau,z)} 
-
\frac{\theta_{-\frac12-2r,m+2}(\tau,z)}{\theta_{\frac12,1}(\tau,z)} 
\Bigg\} \, \theta_{\frac12-2(m+1)r, (m+1)(m+2)}(\tau,0)
\in 
U^{[m+1,\frac12]}
$$

\noindent
2) (i) \quad $\theta_{1,1}(\tau,z) \, \Bigg\{
\dfrac{\theta_{-\frac12, m+1}(\tau,z)}{\theta_{-\frac12,1}(\tau,z)}
-
\dfrac{\theta_{\frac12, m+1}(\tau,z)}{\theta_{\frac12,1}(\tau,z)}\Bigg\}$

{\allowdisplaybreaks
\begin{eqnarray*}
&=& \hspace{-5mm}
\sum_{r \in \zzz/(m+2)\zzz}\Bigg\{
\frac{\theta_{-\frac12-2r-1,m+2}(\tau,z)}{\theta_{-\frac12,1}(\tau,z)} 
-
\frac{\theta_{\frac12+2r+1,m+2}(\tau,z)}{\theta_{\frac12,1}(\tau,z)} 
\Bigg\} \, \theta_{\frac12-(2r+1)(m+1), (m+1)(m+2)}(\tau,0)
\\[2mm]
& \hspace{-5mm}
\underset{\substack{\\[0.5mm] \uparrow \\[1mm] r \rightarrow r-1
}}{=} \hspace{-5mm} &
\sum_{r \in \zzz/(m+2)\zzz}\Bigg\{
\frac{\theta_{\frac12-2r,m+2}(\tau,z)}{\theta_{-\frac12,1}(\tau,z)} 
-
\frac{\theta_{-\frac12+2r,m+2}(\tau,z)}{\theta_{\frac12,1}(\tau,z)} 
\Bigg\} \, \theta_{\frac12-(2r-1)(m+1), (m+1)(m+2)}(\tau,0)
\\[2mm]
& \in &
U^{[m+1,\frac12]}
\end{eqnarray*}}

(ii) \quad $\theta_{1,1}(\tau,z) \, \Bigg\{
\dfrac{\theta_{\frac12, m+1}(\tau,z)}{\theta_{-\frac12,1}(\tau,z)}
-
\dfrac{\theta_{-\frac12, m+1}(\tau,z)}{\theta_{\frac12,1}(\tau,z)}\Bigg\}$

{\allowdisplaybreaks
\begin{eqnarray*}
&=& \hspace{-5mm}
\sum_{r \in \zzz/(m+2)\zzz}\Bigg\{
\frac{\theta_{\frac12+2r+1,m+2}(\tau,z)}{\theta_{-\frac12,1}(\tau,z)} 
-
\frac{\theta_{-\frac12-2r-1,m+2}(\tau,z)}{\theta_{\frac12,1}(\tau,z)} 
\Bigg\} \, \theta_{\frac12-(2r+1)(m+1), (m+1)(m+2)}(\tau,0)
\\[2mm]
& \hspace{-5mm}
\underset{\substack{\\[0.5mm] \uparrow \\[1mm] r \rightarrow r-1
}}{=} \hspace{-5mm} &
\sum_{r \in \zzz/(m+2)\zzz}\Bigg\{
\frac{\theta_{-\frac12+2r,m+2}(\tau,z)}{\theta_{-\frac12,1}(\tau,z)} 
-
\frac{\theta_{\frac12-2r,m+2}(\tau,z)}{\theta_{\frac12,1}(\tau,z)} 
\Bigg\} \, \theta_{\frac12-(2r-1)(m+1), (m+1)(m+2)}(\tau,0)
\\[2mm]
& \in &
U^{[m+1,0]}
\end{eqnarray*}}
Thus the proof of Lemma \ref{n3:lemma:2022-828b} is completed.
\end{proof}

\vspace{1mm}

\begin{prop} 
\label{n3:prop:2022-828a}
For $m \in \nnn$ the following formulas hold:
\begin{enumerate}
\item[{\rm 1)}] \quad $\theta_{0,1} \cdot V^{[m,0]} 
\, + \, \theta_{1,1} \cdot V^{[m,\frac12]} 
\,\ = \,\ V^{[m+1,0]}$
\item[{\rm 2)}] \quad $\theta_{0,1} \cdot V^{[m,\frac12]} 
\, + \, \theta_{1,1} \cdot V^{[m,0]} 
\,\ = \,\ V^{[m+1,\frac12]}$
\end{enumerate}
\end{prop}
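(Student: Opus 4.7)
The plan is to replace $V^{[m,s]}$ by $U^{[m,s]}$ throughout via Lemma \ref{n3:lemma:2022-828a}, so it suffices to prove $\theta_{0,1}\,U^{[m,0]} + \theta_{1,1}\,U^{[m,\frac12]} = U^{[m+1,0]}$; part 2) is entirely parallel (with the roles of $U^{[m,0]}$ and $U^{[m,\frac12]}$ swapped), using Lemma \ref{n3:lemma:2022-828b} 1)(ii), 2)(i) and Lemma \ref{n3:lemma:2022-828c} 2)(ii) in place of 1)(i), 2)(ii) and 2)(i). Write $q_{m,s}$ for the first listed (``quotient-type'') generator of $U^{[m,s]}$, so that $U^{[m,s]} = \ccc((q^{\frac12}))\cdot q_{m,s} + \Theta^{[m,s]}$.

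For the inclusion $\subset$, I treat the two kinds of generators separately. On quotient generators, $\theta_{0,1}\,q_{m,0} \in U^{[m+1,0]}$ by Lemma \ref{n3:lemma:2022-828b} 1)(i) and $\theta_{1,1}\,q_{m,\frac12} \in U^{[m+1,0]}$ by Lemma \ref{n3:lemma:2022-828b} 2)(ii). On theta-difference generators, Note \ref{n3:note:2022-827a} 2) gives $\theta_{0,1}\,\Theta^{[m,0]} \subset \Theta^{[m+1,0]}$ and $\theta_{1,1}\,\Theta^{[m,\frac12]} \subset \Theta^{[m+1,0]}$, and $\Theta^{[m+1,0]} \subset U^{[m+1,0]}$ by construction. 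This disposes of one direction cleanly.

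For the reverse inclusion, the theta-difference part of $U^{[m+1,0]}$ is handled immediately by Lemma \ref{n3:lemma:2022-828c} 2)(i), which gives $\Theta^{[m+1,0]} = \theta_{0,1}\,\Theta^{[m,0]} + \theta_{1,1}\,\Theta^{[m,\frac12]} \subset \theta_{0,1}\,U^{[m,0]} + \theta_{1,1}\,U^{[m,\frac12]}$. What remains, and is the main obstacle, is to realize the quotient generator $q_{m+1,0}$ of $U^{[m+1,0]}$ inside $\theta_{0,1}\,U^{[m,0]} + \theta_{1,1}\,U^{[m,\frac12]}$.

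To overcome this, I would reuse the explicit product-to-sum identity that appears inside the proof of Lemma \ref{n3:lemma:2022-828b} 1)(i),
\[
\theta_{0,1}(\tau,z)\, q_{m,0} \,=\, \sum_{r \,\in\, \zzz/(m+2)\zzz} B_r(\tau,z)\, c_r(\tau),
\]
where $B_r := \tfrac{\theta_{-\frac12-2r,\,m+2}}{\theta_{-\frac12,1}} - \tfrac{\theta_{\frac12+2r,\,m+2}}{\theta_{\frac12,1}}$, $c_r := \theta_{\frac12-2(m+1)r,\,(m+1)(m+2)}(\tau,0)$, and $B_0 = q_{m+1,0}$. By Note \ref{n3:note:2022-828a} 1), each $B_r$ lies in $U^{[m+1,0]}$, so modulo $\Theta^{[m+1,0]}$ it reduces to $\beta_r\,q_{m+1,0}$ for some $\beta_r \in \ccc((q^{\frac12}))$, with $\beta_0 = 1$. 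Rearranging yields a relation of the form $\alpha(\tau)\,q_{m+1,0} \equiv \theta_{0,1}\,q_{m,0} \pmod{\Theta^{[m+1,0]}}$ with $\alpha(\tau) = \sum_r \beta_r\, c_r \in \ccc((q^{\frac12}))$, and the right-hand side together with $\Theta^{[m+1,0]}$ lies in $\theta_{0,1}\,U^{[m,0]} + \theta_{1,1}\,U^{[m,\frac12]}$ by the preceding paragraph. The hard step is then to verify $\alpha(\tau) \ne 0$, which I would do by inspecting the leading $q$-order of the theta constants; if that failed in some degenerate case, the parallel identity coming from Lemma \ref{n3:lemma:2022-828b} 2)(ii) applied to $\theta_{1,1}\,q_{m,\frac12}$ supplies an independent relation, and the two together must place $q_{m+1,0}$ in the required sum (for otherwise $\theta_{0,1}\,U^{[m,0]} + \theta_{1,1}\,U^{[m,\frac12]}$ would be a proper subspace of $U^{[m+1,0]}$, contradicting the dimension count recorded after the definition of $V^{[m,s]}$).
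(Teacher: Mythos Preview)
Your overall route---pass to $U^{[m,s]}$ via Lemma~\ref{n3:lemma:2022-828a}, obtain the inclusion $\subset$ from Lemma~\ref{n3:lemma:2022-828b} together with Note~\ref{n3:note:2022-827a}, and recover the $\Theta$-part of $\supset$ from Lemma~\ref{n3:lemma:2022-828c}---is exactly what the paper's one-line proof (``follows immediately from Lemmas~\ref{n3:lemma:2022-828c}, \ref{n3:lemma:2022-828a} and~\ref{n3:lemma:2022-828b}'') is pointing to. You are also right to isolate the one point the paper leaves implicit, namely why the quotient generator $q_{m+1,0}$ actually lies in $\theta_{0,1}U^{[m,0]}+\theta_{1,1}U^{[m,\frac12]}$.

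Your handling of that last step, however, is not yet a proof. The primary plan (verify $\alpha(\tau)\ne 0$ by inspecting leading $q$-orders) is only asserted, and the fallback is not justified as written: nothing in ``the dimension count recorded after the definition of $V^{[m,s]}$'' rules out, on its face, the possibility that both $\theta_{0,1}q_{m,0}$ and $\theta_{1,1}q_{m,\frac12}$ fall into $\Theta^{[m+1,0]}$. There \emph{is} a clean dimension argument for part~1), and you nearly reach it: the recorded table gives $\dim U^{[m,\frac12]}=\dim U^{[m+1,0]}$ for every $m$, and since multiplication by the nonzero function $\theta_{1,1}$ is injective while $\theta_{1,1}U^{[m,\frac12]}\subset U^{[m+1,0]}$ by Lemma~\ref{n3:lemma:2022-828b}~2)(ii) and Note~\ref{n3:note:2022-827a}, one already has $\theta_{1,1}U^{[m,\frac12]}=U^{[m+1,0]}$, which finishes~$\supset$. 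But your claim that part~2) is ``entirely parallel'' is too quick: the analogous equality $\dim U^{[m,\frac12]}=\dim U^{[m+1,\frac12]}$ holds only for $m$ even (and $\dim U^{[m,0]}<\dim U^{[m+1,\frac12]}$ always), so for $m$ odd neither summand alone can fill $U^{[m+1,\frac12]}$, and you still owe an honest non-vanishing argument to exclude $\theta_{0,1}q_{m,\frac12},\,\theta_{1,1}q_{m,0}\in\Theta^{[m+1,\frac12]}$.
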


\begin{proof} These formulas follow immediately from Lemmas 
\ref{n3:lemma:2022-828c}, \ref{n3:lemma:2022-828a} and
\ref{n3:lemma:2022-828b}. 
\end{proof}

\section{The space of N=3 characters}
\label{sec:N=3:character}

\medskip

For $m \in \nnn$, we put 
$$
\begin{array}{lcl}
\overset{N=3}{CH}{}^{[K(m), {\rm even}]} &:=& 
\ccc((q^{\frac12}))\text{-linear span of} \,\ 
\bigg\{ \overset{N=3}{\rm ch}{}^{(+)}_{H(\Lambda^{[K(m), m_2]})} \,\ ; \,\ 
\begin{array}{l}
m_2 \, \in \, \zzz_{\rm even} \\[0mm]
0 \leq m_2 \leq m
\end{array}
\bigg\}
\\[5mm]
\overset{N=3}{CH}{}^{[K(m), {\rm odd}]} &:=& 
\ccc((q^{\frac12}))\text{-linear span of} \,\ 
\bigg\{ \overset{N=3}{\rm ch}{}^{(+)}_{H(\Lambda^{[K(m), m_2]})} \,\ ; \,\ 
\begin{array}{l}
m_2 \, \in \, \zzz_{\rm odd} \\[0mm]
0 \leq m_2 \leq m
\end{array}
\bigg\}
\\[5mm]
\overset{N=3}{CH}{}^{[K(m)]} &:=& 
\overset{N=3}{CH}{}^{[K(m), {\rm even}]} \,\ \oplus \,\ 
\overset{N=3}{CH}{}^{[K(m), {\rm odd}]} 
\end{array}
$$

\vspace{1mm}

\begin{note} 
\label{n3:note:2022-828b}
For $m \in \nnn$ the following formulas hold:
\begin{enumerate}
\item[{\rm 1)}] \,\ $
\overset{N=3}{CH}{}^{[K(m), {\rm even}]} \,\ = \,\ 
\Bigg\{\dfrac{f(\tau,z)}{\overset{N=3}{R}{}^{(+)}(\tau,z)} 
\,\ ; \,\ f \in V^{[m,\frac12]}\Bigg\} 
\,\ = \,\ 
\Bigg\{\dfrac{f(\tau,z)}{\overset{N=3}{R}{}^{(+)}(\tau,z)} 
\,\ ; \,\ f \in U^{[m,\frac12]}\Bigg\} $
\item[{\rm 2)}] \,\ $
\overset{N=3}{CH}{}^{[K(m), {\rm odd}]} \,\ = \,\ 
\Bigg\{\dfrac{f(\tau,z)}{\overset{N=3}{R}{}^{(+)}(\tau,z)} 
\,\ ; \,\ f \in V^{[m,0]}\Bigg\} 
\,\ = \,\ 
\Bigg\{\dfrac{f(\tau,z)}{\overset{N=3}{R}{}^{(+)}(\tau,z)} 
\,\ ; \,\ f \in U^{[m,0]}\Bigg\} $
\end{enumerate}
\end{note}

\begin{proof} These formulas are clear from 
Proposition 4.1 in \cite{W2022a} and Lemma \ref{n3:lemma:2022-828a}.
\end{proof}

\vspace{1mm}

\begin{prop} 
\label{n3:prop:2022-828b}
For $m \in \nnn$ the following formulas hold:
\begin{enumerate}
\item[{\rm 1)}]
\begin{enumerate}
\item[{\rm (i)}] \quad $\theta_{0,1} \cdot 
\overset{N=3}{CH}{}^{[K(m), {\rm even}]} 
\, + \, \theta_{1,1} \cdot 
\overset{N=3}{CH}{}^{[K(m), {\rm odd}]}
\,\ = \,\ 
\overset{N=3}{CH}{}^{[K(m+1), {\rm even}]}$
\item[{\rm (ii)}] \quad $\theta_{0,1} \cdot 
\overset{N=3}{CH}{}^{[K(m), {\rm odd}]} 
\, + \, \theta_{1,1} \cdot 
\overset{N=3}{CH}{}^{[K(m), {\rm even}]}
\,\ = \,\ 
\overset{N=3}{CH}{}^{[K(m+1), {\rm odd}]}$
\end{enumerate}
\item[{\rm 2)}] \quad $\theta_{0,1} \cdot \overset{N=3}{CH}{}^{[K(m)]} 
\, + \, \theta_{1,1} \cdot \overset{N=3}{CH}{}^{[K(m)]}
\,\ = \,\ 
\overset{N=3}{CH}{}^{[K(m+1)]}$
\end{enumerate}
\end{prop}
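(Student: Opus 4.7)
The plan is to reduce everything to Proposition \ref{n3:prop:2022-828a} via Note \ref{n3:note:2022-828b}. First I would observe that Note \ref{n3:note:2022-828b} identifies the character spaces with the numerator spaces through division by the common denominator $\overset{N=3}{R}{}^{(+)}(\tau,z)$: explicitly,
\[
\overset{N=3}{CH}{}^{[K(m), {\rm even}]} \, = \, \frac{V^{[m,\frac12]}}{\overset{N=3}{R}{}^{(+)}},
\qquad
\overset{N=3}{CH}{}^{[K(m), {\rm odd}]} \, = \, \frac{V^{[m,0]}}{\overset{N=3}{R}{}^{(+)}}.
\]
Since $\theta_{0,1}$ and $\theta_{1,1}$ are elements of $\ccc((q^{\frac12}))[\theta\text{'s}]$ acting by multiplication on characters, they pass through the denominator, so multiplication by $\theta_{j,1}$ on characters is equivalent to multiplication on numerators modulo the same $\overset{N=3}{R}{}^{(+)}$.

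To prove 1)(i), I would compute
\begin{eqnarray*}
& & \theta_{0,1} \cdot \overset{N=3}{CH}{}^{[K(m), {\rm even}]}
\, + \, \theta_{1,1} \cdot \overset{N=3}{CH}{}^{[K(m), {\rm odd}]}
\\[1mm]
&=& \frac{\theta_{0,1} \cdot V^{[m,\frac12]} \, + \, \theta_{1,1} \cdot V^{[m,0]}}{\overset{N=3}{R}{}^{(+)}}
\,\ = \,\ \frac{V^{[m+1,\frac12]}}{\overset{N=3}{R}{}^{(+)}}
\,\ = \,\ \overset{N=3}{CH}{}^{[K(m+1), {\rm even}]},
\end{eqnarray*}
where the middle equality is Proposition \ref{n3:prop:2022-828a} 2). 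The proof of 1)(ii) is strictly parallel, invoking Proposition \ref{n3:prop:2022-828a} 1) instead: the pair $(V^{[m,0]}, V^{[m,\frac12]})$ combines under $(\theta_{0,1}, \theta_{1,1})$ multiplication to give $V^{[m+1,0]}$, which corresponds to $\overset{N=3}{CH}{}^{[K(m+1), {\rm odd}]}$.

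Finally, claim 2) is immediate by taking the direct sum of 1)(i) and 1)(ii) using the decomposition $\overset{N=3}{CH}{}^{[K(m)]} = \overset{N=3}{CH}{}^{[K(m), {\rm even}]} \oplus \overset{N=3}{CH}{}^{[K(m), {\rm odd}]}$ built into the definition. There is no genuine obstacle here beyond bookkeeping: all of the substantive work has been done in establishing Proposition \ref{n3:prop:2022-828a}, which in turn rested on the explicit formulas of Proposition \ref{n3:prop:2022-827a} and the multiplication rules for level-one theta functions used in the proof of Lemma \ref{n3:lemma:2022-828b}. The only care required is to double-check the parity correspondence ${\rm even} \leftrightarrow s=\frac12$ and ${\rm odd} \leftrightarrow s=0$ coming from Note \ref{n3:note:2022-828b} when matching the two clauses of Proposition \ref{n3:prop:2022-828a} to (i) and (ii).
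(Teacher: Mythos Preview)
Your proposal is correct and matches the paper's own proof, which simply says the formulas follow immediately from Proposition \ref{n3:prop:2022-828a} and Note \ref{n3:note:2022-828b}. You have spelled out the bookkeeping explicitly and correctly tracked the parity correspondence (${\rm even} \leftrightarrow s=\tfrac12$, ${\rm odd} \leftrightarrow s=0$), so there is nothing to add.
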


\begin{proof} These formulas follow immediately from Proposition 
\ref{n3:prop:2022-828a} and Note \ref{n3:note:2022-828b}.
\end{proof}

\vspace{1mm}

Using the above Proposition \ref{n3:prop:2022-828b}, we can 
prove the following theorem:

\begin{thm} 
\label{n3:thm:2022-827a}
For $m, n \in \nnn$, the following formulas hold:
\begin{enumerate}
\item[{\rm 1)}] \quad $\overset{N=3}{CH}{}^{[K(m)]}
\cdot \overset{N=3}{CH}{}^{[K(n)]}
\,\ = \,\ 
\overset{N=3}{CH}{}^{[K(m+n)]}$
\item[{\rm 2)}] 
\begin{enumerate}
\item[{\rm (i)}] \quad $\overset{N=3}{CH}{}^{[K(m), {\rm even}]}
\cdot \overset{N=3}{CH}{}^{[K(n), {\rm even}]}
\,\ \subset \,\ 
\overset{N=3}{CH}{}^{[K(m+n), {\rm even}]}$
\item[{\rm (ii)}] \quad $\overset{N=3}{CH}{}^{[K(m), {\rm even}]}
\cdot \overset{N=3}{CH}{}^{[K(n), {\rm odd}]}
\,\ \subset \,\ 
\overset{N=3}{CH}{}^{[K(m+n), {\rm odd}]}$
\item[{\rm (iii)}] \quad $\overset{N=3}{CH}{}^{[K(m), {\rm odd}]}
\cdot \overset{N=3}{CH}{}^{[K(n), {\rm odd}]}
\,\ \subset \,\ 
\overset{N=3}{CH}{}^{[K(m+n), {\rm even}]}$
\end{enumerate}
\end{enumerate}
\noindent
where, for $\ccc((q^{\frac12}))$\text{-}{\rm spaces} $A$ and $B$, 
$A \cdot B$ denotes the linear span of \,\  
$\{ fg \, ; \, f \in A, \,\ g \in B\}$.
\end{thm}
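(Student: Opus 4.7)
My plan is to prove Part 2) by simultaneous induction on $n$ for all three inclusions (i), (ii), (iii), and then derive Part 1) as a corollary. The inductive step is essentially formal, relying on Proposition \ref{n3:prop:2022-828b}; the real content sits in the base case $n = 1$.

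For the inductive step, suppose (i), (ii), (iii) hold at level $n$ for arbitrary $m \in \nnn$. To prove (i) at level $n+1$, write any element of $\overset{N=3}{CH}{}^{[K(n+1), {\rm even}]}$ as $\theta_{0,1}\chi' + \theta_{1,1}\chi''$ with $\chi' \in \overset{N=3}{CH}{}^{[K(n), {\rm even}]}$ and $\chi'' \in \overset{N=3}{CH}{}^{[K(n), {\rm odd}]}$, using Proposition \ref{n3:prop:2022-828b} 1)(i). Multiplying by $\psi \in \overset{N=3}{CH}{}^{[K(m), {\rm even}]}$ and pulling the scalar theta functions past the character product gives
\[
\psi(\theta_{0,1}\chi' + \theta_{1,1}\chi'') \,\ = \,\ \theta_{0,1}(\psi\chi') \, + \, \theta_{1,1}(\psi\chi'') \,\ \in \,\ \theta_{0,1} \overset{N=3}{CH}{}^{[K(m+n), {\rm even}]} \, + \, \theta_{1,1} \overset{N=3}{CH}{}^{[K(m+n), {\rm odd}]},
\]
which equals $\overset{N=3}{CH}{}^{[K(m+n+1), {\rm even}]}$ by Proposition \ref{n3:prop:2022-828b} 1)(i) applied once more. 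Cases (ii) and (iii) are handled by the same manipulation, using both parts of Proposition \ref{n3:prop:2022-828b} 1) together with all three inductive hypotheses; note that (iii) at level $n+1$ needs the IH for (iii) and (ii) at level $n$ (where commutativity of multiplication makes (ii) symmetric in its two factors).

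The base case $n = 1$ is the main obstacle. By Note \ref{n3:note:2022-828b} and the explicit one-element bases of $U^{[1, 0]}$ and $U^{[1, \frac12]}$, both $\overset{N=3}{CH}{}^{[K(1), {\rm even}]}$ and $\overset{N=3}{CH}{}^{[K(1), {\rm odd}]}$ are one-dimensional $\ccc((q^{\frac12}))$-lines. I would identify each of them, up to a scalar in $\ccc((q^{\frac12}))$, with the line $\ccc((q^{\frac12}))\theta_{0,1}$ or $\ccc((q^{\frac12}))\theta_{1,1}$ respectively, by expanding the generating ratios $\theta_{\pm\frac12, 2}/\theta_{\pm\frac12, 1}$ via the theta addition formula (2.2a) of \cite{W2022d} in the same spirit as the computations carried out in the proof of Lemma \ref{n3:lemma:2022-828b}. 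Once multiplication by a level-$1$ character is seen to coincide, up to a $\ccc((q^{\frac12}))$-scalar, with multiplication by $\theta_{0,1}$ or $\theta_{1,1}$, the three base-case inclusions reduce immediately to Proposition \ref{n3:prop:2022-828b} 1).

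Finally, Part 1) follows from Part 2). The inclusion $\subset$ is immediate from the direct sum decomposition $\overset{N=3}{CH}{}^{[K(n)]} = \overset{N=3}{CH}{}^{[K(n), {\rm even}]} \oplus \overset{N=3}{CH}{}^{[K(n), {\rm odd}]}$ and the four products covered by Part 2). For $\supseteq$, iterate Proposition \ref{n3:prop:2022-828b} 2) to obtain
\[
\overset{N=3}{CH}{}^{[K(m+n)]} \,\ = \,\ \sum_{a+b \, = \, n} \theta_{0,1}^{a} \, \theta_{1,1}^{b} \cdot \overset{N=3}{CH}{}^{[K(m)]},
\]
and use the base-case identification together with a secondary induction on $n$ (each step being one application of Proposition \ref{n3:prop:2022-828b} 2)) to see that every monomial $\theta_{0,1}^{a} \theta_{1,1}^{b}$ with $a + b = n$ already lies in $\overset{N=3}{CH}{}^{[K(n)]}$. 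This places the right-hand side inside $\overset{N=3}{CH}{}^{[K(m)]} \cdot \overset{N=3}{CH}{}^{[K(n)]}$ and completes the proof.
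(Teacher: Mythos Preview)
Your approach is correct and essentially the same as the paper's: both arguments reduce everything to Proposition \ref{n3:prop:2022-828b} together with the identification of $\overset{N=3}{CH}{}^{[K(1)]}$ with $\ccc((q^{\frac12}))\,\theta_{0,1} \oplus \ccc((q^{\frac12}))\,\theta_{1,1}$, followed by a straightforward induction. The paper organizes this slightly more directly by proving 1) first via induction on $m$ (so that equality, not just an inclusion, comes out at every step) and citing Proposition~3.1 of \cite{W2022d} for the level-$1$ identification rather than recomputing it from the addition formula; 2) is then dispatched by the same induction using Proposition \ref{n3:prop:2022-828b}~1).
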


\begin{proof} We shall prove the claim 1) by induction on $m$. 
As the 1st step, in the case $m=1$, $\overset{N=3}{CH}{}^{[K(1)]}$ is given 
by Proposition 3.1 in \cite{W2022d}: 
\begin{equation}
\overset{N=3}{CH}{}^{[K(1)]} \,\ = \,\ 
\ccc((q^{\frac12})) \, \theta_{0,1} \, + \, \ccc((q^{\frac12})) \, \theta_{1,1}
\label{n3:eqn:2022-923a}
\end{equation} 
So the claim 1) holds by Proposition \ref{n3:prop:2022-828b}.

\medskip

As the 2nd step, we shall prove that the claim 1) holds for $m+1$ 
assuming that it holds for $m$, as follows. Since 
$$
\overset{N=3}{CH}{}^{[K(m+1)]} \,\ = \,\ 
\theta_{0,1} \cdot \overset{N=3}{CH}{}^{[K(m)]} 
\, + \, \theta_{1,1} \cdot \overset{N=3}{CH}{}^{[K(m)]}
$$
by Proposition \ref{n3:prop:2022-828b}, we have
$$
\overset{N=3}{CH}{}^{[K(m+1)]}
\cdot \,\ \overset{N=3}{CH}{}^{[K(n)]}
=
\theta_{0,1} \cdot 
\underbrace{\overset{N=3}{CH}{}^{[K(m)]} \cdot \, \overset{N=3}{CH}{}^{[K(n)]}
}_{\substack{|| \\[-1mm] {\displaystyle \overset{N=3}{CH}{}^{[K(m+n)]}
}}}
\, + \, 
\theta_{1,1} \cdot 
\underbrace{\overset{N=3}{CH}{}^{[K(m)]} \cdot \, \overset{N=3}{CH}{}^{[K(n)]}
}_{\substack{|| \\[-1mm] {\displaystyle \overset{N=3}{CH}{}^{[K(m+n)]}
}}}
$$
which is equal to $\overset{N=3}{CH}{}^{[K(m+n+1)]}$
again by Proposition \ref{n3:prop:2022-828b}.
Thus we have proved 1). \\

The claim 2) follows also by induction on $m$ using Proposition 
\ref{n3:prop:2022-828b}, just in the similar way with the proof of 1).
\end{proof}

\vspace{0mm}

From this theorem, we obtain the following:

\vspace{0mm}

\begin{cor}
\label{n3:cor:2022-827a}
Let $m \in \nnn$ and $m_2 \in \zzz$ such that $0 \leq m_2 \leq m$.
Then
\begin{enumerate}
\item[{\rm 1)}] the character of the N=3 module $H(\Lambda^{[K(m), m_2]})$
can be written as a $\ccc((q^{\frac12}))$-linear combination of 
$(\theta_{1,1}(\tau,z))^j(\theta_{0,1}(\tau,z))^{m-j}$ \,\ 
$(0 \leq j \leq m)$.

\item[{\rm 2)}]
\begin{enumerate}
\item[{\rm (i)}] If $m_2 \in \zzz_{\rm even}$, \,
$\overset{N=3}{\rm ch}{}^{(+)}_{H(\Lambda^{[K(m), m_2]})}(\tau,z)$ is a 
$\ccc((q^{\frac12}))$-linear combination of \\
$(\theta_{1,1}(\tau,z))^j(\theta_{0,1}(\tau,z))^{m-j}$ \,\ 
$(j \in \zzz_{\rm even})$,
\item[{\rm (ii)}] If $m_2 \in \zzz_{\rm odd}$, \,
$\overset{N=3}{\rm ch}{}^{(+)}_{H(\Lambda^{[K(m), m_2]})}(\tau,z)$ is a 
$\ccc((q^{\frac12}))$-linear combination of \\
$(\theta_{1,1}(\tau,z))^j(\theta_{0,1}(\tau,z))^{m-j}$ \,\ 
$(j \in \zzz_{\rm odd})$.
\end{enumerate}
\end{enumerate}
\end{cor}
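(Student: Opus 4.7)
The plan is to prove both parts of Corollary \ref{n3:cor:2022-827a} by induction on $m$, using Proposition \ref{n3:prop:2022-828b} as the inductive step and the base case $m=1$ supplied by \eqref{n3:eqn:2022-923a}. Since $\overset{N=3}{\rm ch}{}^{(+)}_{H(\Lambda^{[K(m),m_2]})}$ lies in $\overset{N=3}{CH}{}^{[K(m), {\rm even}]}$ when $m_2 \in \zzz_{\rm even}$ and in $\overset{N=3}{CH}{}^{[K(m), {\rm odd}]}$ when $m_2 \in \zzz_{\rm odd}$, by the very definition of these two subspaces, it suffices to identify each of them as the $\ccc((q^{\frac12}))$-linear span of those monomials $(\theta_{1,1})^j(\theta_{0,1})^{m-j}$ for which $j$ has the matching parity; claim 1) then follows by summing the two refined statements.

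For the base case $m=1$, the only admissible values of $m_2$ are $0$ and $1$, and Proposition 3.1 of \cite{W2022d} (the content of \eqref{n3:eqn:2022-923a}) reads off as
\[
\overset{N=3}{CH}{}^{[K(1),{\rm even}]} \,=\, \ccc((q^{\frac12}))\,\theta_{0,1}, \qquad
\overset{N=3}{CH}{}^{[K(1),{\rm odd}]} \,=\, \ccc((q^{\frac12}))\,\theta_{1,1},
\]
which are precisely the $\ccc((q^{\frac12}))$-linear spans of $(\theta_{1,1})^j(\theta_{0,1})^{1-j}$ with $j$ even and with $j$ odd respectively.

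For the inductive step, assume the refined statements hold for some $m \in \nnn$. Proposition \ref{n3:prop:2022-828b} 1) gives
\[
\overset{N=3}{CH}{}^{[K(m+1),{\rm even}]} \,=\, \theta_{0,1}\cdot \overset{N=3}{CH}{}^{[K(m),{\rm even}]} \,+\, \theta_{1,1}\cdot \overset{N=3}{CH}{}^{[K(m),{\rm odd}]}.
\]
By the inductive hypothesis, the first summand is the $\ccc((q^{\frac12}))$-linear span of $\theta_{0,1}\cdot(\theta_{1,1})^j(\theta_{0,1})^{m-j} = (\theta_{1,1})^j(\theta_{0,1})^{m+1-j}$ with $j$ even, while the second is the $\ccc((q^{\frac12}))$-linear span of $\theta_{1,1}\cdot(\theta_{1,1})^j(\theta_{0,1})^{m-j} = (\theta_{1,1})^{j+1}(\theta_{0,1})^{m-j}$ with $j$ odd, i.e.\ of $(\theta_{1,1})^k(\theta_{0,1})^{m+1-k}$ with $k$ even. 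Together these exhaust all such monomials with $k$ even, which proves the even case for $m+1$. The odd case is completely symmetric, using the second identity of Proposition \ref{n3:prop:2022-828b} 1).

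The argument is essentially bookkeeping: multiplication by $\theta_{0,1}$ preserves the parity of the $\theta_{1,1}$-exponent while multiplication by $\theta_{1,1}$ flips it, so the parity refinement propagates cleanly through the recursion. There is thus no real obstacle; the only content beyond the induction is the correct identification of the two halves of $\overset{N=3}{CH}{}^{[K(1)]}$ in the base case, which is provided by \cite{W2022d}.
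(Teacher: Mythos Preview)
Your proof is correct and follows essentially the same route as the paper: both argue by induction on $m$, with the base case supplied by \eqref{n3:eqn:2022-923a} (Proposition~3.1 of \cite{W2022d}) and the inductive step by Proposition~\ref{n3:prop:2022-828b}. The only organizational difference is that the paper first deduces part~1) via Theorem~\ref{n3:thm:2022-827a} (so that $\overset{N=3}{CH}{}^{[K(m)]} = \big(\overset{N=3}{CH}{}^{[K(1)]}\big)^m$) and then says part~2) follows from the parity-tracking arguments of the section, whereas you prove the parity-refined statements of part~2) directly and obtain part~1) by summing; the underlying induction is identical.
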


\begin{proof} By Theorem \ref{n3:thm:2022-827a} and induction on $m$, 
we have
$$
\overset{N=3}{CH}{}^{[K(m)]} \, = \, 
\Big(\overset{N=3}{CH}{}^{[K(1)]}\Big)^m
\, 
\underset{\substack{\\[0.5mm] \uparrow \\[1mm] \text{by \eqref{n3:eqn:2022-923a}}
}}{=} \, 
\bigoplus_{j=0}^m\ccc ((q^{\frac12})) \, (\theta_{1,1})^j(\theta_{0,1})^{m-j},
$$
proving 1). \, 2) follows from 1) and the arguments in this section.
\end{proof}

\end{document}